\documentclass[12pt,a4paper]{amsart}

\usepackage{amssymb,amsmath,amsthm}
\usepackage{euscript}
\usepackage{mathrsfs}
\usepackage{hyperref}
\hypersetup{
pdftitle={Research Paper},
pdfauthor={Abbas {Nasrollah Nejad} -Ashkan Nikseresht-Ali Akbar {Yazdan Pour}-Rashid Zaare-Nahandi},
pdfsubject={Tame Graphs and Clutters},
pdfcreator={Abbas Nasrollah Nejad},
colorlinks,
linkcolor=blue,
citecolor=magenta,
anchorcolor=red,
bookmarksopen,
urlcolor=red,
filecolor=red,
pdfpagetransition={Wipe}
}

\theoremstyle{plain}
\newtheorem{thm}{Theorem}[section]
\newtheorem{cor}[thm]{Corollary}
\newtheorem{lem}[thm]{Lemma}
\newtheorem{prop}[thm]{Proposition}

\newtheorem{Proposition}[thm]{Proposition}

\theoremstyle{definition}

\newtheorem{defn}[thm]{Definition}
\newtheorem{ex}[thm]{Example}

\theoremstyle{remark}

\newtheorem{rem}{Remark}

\newtheorem{nota}{Notation}


\newcommand{\rar}{\rightarrow}

\newcommand{\surjects}{\twoheadrightarrow}
\newcommand{\injects}{\hookrightarrow}

\textwidth=15cm
\textheight=22cm
\topmargin=0.5cm
\oddsidemargin=0.5cm
\evensidemargin=0.5cm


\def\spec{{\rm spec}\,}

\def\ker{{\rm ker}\,}

\def\spec#1{{\rm Spec}\left( #1 \right)}
\def\proj#1{{\rm Proj}\, (#1)}
\def\supp#1{{\rm Supp}\, (#1)}




\newcommand{\cc}{\EuScript{C}}

\newcommand{\N}{\mathrm{N}}

\newcommand{\Bl}[1]{\mathrm{Bl}_{#1}(R)}

\newcommand{\n}{\mathbb{N}}
\newcommand{\z}{\mathbb{Z}}
\newcommand{\bbA}{\mathbb{A}}

\newcommand{\ifof}{if and only if }
\newcommand{\se}{\subseteq}

\newcommand{\sm}{\setminus }

\newcommand{\give}{$\Rightarrow$}
\newcommand{\rgive}{$\Leftarrow$}
\renewcommand{\l}{\left}
\renewcommand{\r}{\right}
\newcommand{\rg}{\rangle}
\renewcommand{\lg}{\langle}

\newcommand{\f}[2]{\frac{#1}{#2}}
\newcommand{\tohi}{\emptyset}


\sloppy
\begin{document}

\title[Tame Graphs, Clutters and their Rees algebras]{Tame Graphs, Clutters and their Rees algebras }
\author[A. Nasrollah Nejad, A. Nikseresht, A. A. {Yazdan Pour}, R. Zaare-Nahandi]
{Abbas Nasrollah nejad, Ashkan  Nikseresht, {Ali Akbar} {Yazdan Pour}, Rashid Zaare-Nahandi }
\address{
department of mathematics\\ institute for advanced studies in basic sciences (IASBS)\\
p.o.box 45195-1159 \\ zanjan, iran} \email{abbasnn@iasbs.ac.ir} \email{ashkan\_nikseresht@yahoo.com} \email{yazdan@iasbs.ac.ir}
\email{rashidzn@iasbs.ac.ir}

\subjclass[2010]{primary 13A30, 14M25, 14E15; secondary 13F55, 14B05} \keywords{blowup algebra, resolution of singularity, tame ideal, clutter}

\begin{abstract}
A tame ideal is an ideal $I$ such that the blowup of the affine space $\mathbb{A}_k^n$ along $I$ is regular. In
this paper, we give a combinatorial characterization of tame squarefree monomial ideals. More precisely, we show that a square free monomial ideal is tame if and only if the corresponding clutter is a union of some isolated vertices and a complete $d$-partite $d$-uniform clutter. It turns out that a squarefree monomial ideal is tame, if and only if the facets of its Stanley-Reisner complex have mutually disjoint complements. Also, we characterize all monomial ideals generated in degree at most 2 which are tame. Finally, we prove that tame squarefree ideals are of fiber type.
\end{abstract}
\maketitle


\def\theenumi{\roman{enumi}}

\section*{Introduction}
The blowup of a scheme $X$ along  the closed subscheme $Z\subset X$ is the scheme $\mathrm{Bl}_Z (X)$ together
with a proper rational map $\pi:\mathrm{Bl}_Z (X)\rar X$ which is an isomorphism outside of $Z$. The closed
subscheme $Z$ is called the center of the blowup. It is well known that the blowup of a regular scheme in a
regular center is regular. A natural question which arises form this statement  is: ''when the blowup of a regular
scheme in non-regular center remains regular".

The main  interest in this article is the question of determining  squarefree monomial ideals such that the blowup
of the affine space along these ideals is regular. This work is inspired by a paper of E. Faber and D.B. Westra
\cite{FW}. They give a  smoothness criterion, based on convex geometry, for a monomial ideal such that  the
blowing up $\mathbb{A}_k^n$ along this ideal is regular. These  ideals have been dubbed  \textit{tame}. They also
find  some classes of tame  monomial ideals such as monomial building sets and permutohedra.

Throughout this paper, the base field $k$ will be considered as an algebraically closed field. Let $R=k[x,y,z]$ be a polynomial ring and let
$Z$ stand for the closed subscheme of affine space $\mathbb{A}_k^3=\spec R $ defined by the monomial ideal
$I=(x,yz)$. The blowup of $\mathbb{A}_k^3$ along $Z$ (or with center $Z$) is singular. In fact, the affine chart
corresponding with the ring extension $R\injects R\left[\frac{yz}{x}\right]\simeq k[x,y,z,t]/(yz-xt)$, defines a
singular variety. Assume that $X\subset \mathbb{A}_k^3$ is  an affine scheme such that the singular subscheme of
$X$ is defined by $I=(x,yz)$ (see e.g. \cite{FW}). If we want to resolve $X$ by one blowup, we have to use the
singular subscheme as a center. Then the blowup of $X$ is embedded in a singular ambient scheme, because $I$ is
not tame. In this case,  we can not speak of an embedded resolution of singularity of $X$ \cite{Cutk}  (see also
\cite{Hauser1}). But note that the blowup of $\mathbb{A}_k^3$ in  a center  defined by the ideal
$(x,yz)(x,y)(x,z)$ is regular and the latter ideal is tame \cite{Hauser2}.

The blowup of an affine space $\mathbb{A}_k^n$ in a center defined by a monomial ideal in $k[x_1,\ldots,x_n]$ is a
toric variety. Therefore, we may  restate the tameness property in combinatorics. In this paper, we determine tame
monomial ideals in $k[x_1,\ldots,x_n]$ which correspond to  clutters and graphs with loops.

The outline of the paper is as follows. In section 1,  we state definition of blowup via the Rees algebra. For a complete discussion and introduction
to blowup and resolution of singularity we refer to \cite{lipman} and \cite{Orlando}.  We focus on blowup of
$\mathbb{A}_k^n$ along monomial ideals and recall some definitions in convex geometry. We give an algebraic
description of the smoothness criterion presented in \cite[Theorem 12(ii)]{FW}. 

In section 2, a full combinatorial characterization of tame squarefree monomial ideals is given. A clutter $\cc$
is called tame if the circuit ideal $I(\cc)$ is tame. One of the main results of this paper is that $\cc$ is tame
if and only if $\cc$ 
 is a union of some isolated vertices and a complete $d$-partite $d$-uniform clutter (Theorem~\ref{main cluts}). It turns out that, a squarefree monomial ideal is tame if and only if the facets of the Stanley-Reisner complex of $I(\cc)$ have mutually disjoint complements (Proposition~\ref{nice criterion for tameness}). Also in this section, it is shown that if the polarization of a monomial ideal $I$ is tame then $I$ is tame, but the converse is not true in general.

In section 3, we present a characterization of  tame monomial ideals generated in degree at most two. In particular, if
$G$ is a graph without isolated vertices (possibly with loops) then the edge ideal $I(G)$ is tame if and only if
$G$ is a looped star, looped complete or simple complete bipartite graph (Corollary \ref{simple graphs} and Theorem
\ref{main looped graph}).

Finally, we give an explicit description of the defining ideal of affine charts $U_i$ of blowup
$\proj{\mathcal{R}_R(I)}$ where $I$ is the circuit ideal of a complete $d$-partite $d$-uniform clutter. By using
this, we find the defining equations of the Rees algebra of a tame squarefree monomial ideal. In particular, it is
proved that the circuit ideal of a tame squarefree monomial ideal is of fiber type.


\section{Blowup along monomial ideals}
Let $R$ be a Noetherian ring and $I\subset R$ be an ideal. The \textit{Rees algebra} of $I$ is defined to be the
graded algebra  $\mathcal{R}_R(I)=R[It]=\oplus_{i\geq 0} I^it^i\subset R[t]$. Assume $I$ is generated by
$f_1,\ldots,f_m\in R$. Consider the polynomial ring $S=R[T_1,\ldots,T_m]$, where $T_i$ are indeterminates. Then
there is a natural ring homomorphism $\varphi: S\surjects \mathcal{R}_R(I) $ that sends $T_i$ to $f_it$. Let
$\mathcal{J}=\ker \varphi$ be the defining ideal  of $\mathcal{R}_R(I)$. Then $\mathcal{R}_R(I)\simeq
S/\mathcal{J}$ and $\mathcal{J}=\bigoplus_{i=1}^{\infty}\mathcal{J}$ is a graded ideal. A minimal generating set
for $\mathcal{J}$ is called the \emph{defining equation set} of the Rees algebra. Also, $\mathcal{J}_1 $ is known
as the defining ideal of the symmetric algebra of $I$, in fact $\mathcal{J}_1= I_1({\mathbf T}.\psi)$ is the ideal
generated by one minors of the product of the variable matrix $\mathbf{T}=[T_1\ T_2\ \ldots \ \ T_n]$  by the
first syzygy matrix $\psi$ of $I$.

Let $(R,\mathfrak{m})$ be a Noetherian local ring  and $I\subset R$ an ideal, the \textit{special fiber} of $I$ is
defined to be $\mathcal{F}(I)={\rm gr}_R(I)\otimes R/\mathfrak{m}$, where ${\rm
gr}_R(I)=\mathcal{R}_R(I)/I\mathcal{R}_R(I)=\bigoplus_{i=0}^{\infty}I^i/I^{i+1}$. In the case that $R$ is a
polynomial ring over a field $k$ and $I=(f_1,\ldots,f_m)$ the special fiber $\mathcal{F}(I)$ is isomorphic to
$k[f_1,\ldots, f_m]$. Then there is a homomorphism $\varPsi: k[T_1,\ldots,T_m]\surjects \mathcal{F}(I)$ that maps
$T_i$ to $f_i$. Set $\mathcal{H}=\ker \varPsi$. The ideal $I$ is called of\textit{ fiber type} if
$\mathcal{J}=S\mathcal{J}_1+S\mathcal{H}$.

Suppose that $X=\spec{R}$ is an affine scheme and let $Z=\spec{R/I}$ be a closed subscheme of $X$ defined by an
ideal $I$ of $R$. The \textit{blowup} of $X$ along  $Z$ is the scheme ${\mathrm{Bl}_Z \left(X \right)}={\rm Proj}(\mathcal{R}_R(I))$
together with the morphism $\pi\colon {\mathrm{Bl}_Z \left(X \right)}\rar X$ given by the natural ring homomorphism $R\rar
\mathcal{R}_R(I)$. The subscheme $E=\pi^{-1}(Z)$ of ${\mathrm{Bl}_Z \left(X \right)}$ is called the \textit{exceptional divisor}
of the blowup. The blowup ${\mathrm{Bl}_Z \left(X \right)}$ can be embedded into $\mathbb{P}^{m-1}_R$ as the closed subscheme
defined by the defining ideal $\mathcal{J}$ of the Rees algebra of $I$, where $m$ is the size of a generating set
for $I$. The following statement can be found in standard textbooks, like  \cite{Cutk}.
\begin{Proposition}\label{charts}
Let $X=\spec{R}$ be an affine scheme and let $Z$ be a closed subscheme defined by an ideal $I=(f_1,\ldots, f_m)$
of $R$. The following statements hold:
\begin{enumerate}
\item  The blowup  of $X$ along $Z$ can be covered by  affine charts 
$$\spec{R\left[ \frac{f_1}{f_i},\ldots, \frac{f_m}{f_i} \right]},$$
for $i=1,\ldots,m$.
\item  The defining ideal of $\mathrm{Bl}_Z \left(X \right)$ in the  $i$'th affine chart $\spec{R[f_1/f_i,\ldots, f_m/f_i]}$ is
    given by dehomogenization of the defining ideal $\mathcal{J}$ of the Rees algebra of $I$  with respect to
    variable $T_i$.
\end{enumerate}
\end{Proposition}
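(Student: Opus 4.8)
The plan is to derive both parts from the standard description of the $\mathrm{Proj}$ of a graded algebra that is generated in degree one over its degree-zero piece, combined with the exactness of localization and of passing to the degree-zero graded component.

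First I would record that, since $I=(f_1,\dots,f_m)$, one has $\mathcal{R}_R(I)=R[f_1t,\dots,f_mt]$, so the Rees algebra is generated as an algebra over $\mathcal{R}_R(I)_0=R$ by the degree-one elements $f_1t,\dots,f_mt$. Hence $\mathrm{Bl}_Z(X)=\proj{\mathcal{R}_R(I)}$ is covered by the basic affine opens $D_+(f_it)$, $i=1,\dots,m$, and the coordinate ring of $D_+(f_it)$ is $\mathcal{R}_R(I)_{(f_it)}$, the degree-zero part of the localization $\mathcal{R}_R(I)[(f_it)^{-1}]$. To finish (1) it remains to identify this ring with the subring $R[f_1/f_i,\dots,f_m/f_i]$ of $R_{f_i}$: one inclusion is clear, while for the other a homogeneous degree-zero element of $\mathcal{R}_R(I)[(f_it)^{-1}]$ has the shape $gt^k/(f_it)^k=g/f_i^k$ with $g\in I^k=(f_1,\dots,f_m)^k$, and writing $g$ as an $R$-linear combination of degree-$k$ monomials in the $f_j$ and dividing by $f_i^k$ exhibits it in $R[f_1/f_i,\dots,f_m/f_i]$.

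For (2), I would use the graded surjection $\varphi\colon S=R[T_1,\dots,T_m]\surjects\mathcal{R}_R(I)$, $T_j\mapsto f_jt$, with homogeneous kernel $\mathcal{J}$, to realize $\mathrm{Bl}_Z(X)=\proj{S/\mathcal{J}}$ as the closed subscheme $V_+(\mathcal{J})$ of $\mathbb{P}^{m-1}_R=\proj{S}$. On the standard chart $D_+(T_i)=\spec{R[T_1/T_i,\dots,T_m/T_i]}\cong\mathbb{A}^{m-1}_R$, the scheme $V_+(\mathcal{J})\cap D_+(T_i)$ has coordinate ring $(S/\mathcal{J})_{(\bar T_i)}=S_{(T_i)}/\mathcal{J}_{(T_i)}$, where $\mathcal{J}_{(T_i)}$ is the degree-zero part of $\mathcal{J}[T_i^{-1}]$; this uses only that localizing at $T_i$ and then truncating to degree zero are exact operations on graded $S$-modules. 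I would then check that, under the isomorphism $S_{(T_i)}\cong R[T_1,\dots,\widehat{T_i},\dots,T_m]$ given by $T_j/T_i\mapsto T_j$, the ideal $\mathcal{J}_{(T_i)}$ is exactly the dehomogenization of $\mathcal{J}$ with respect to $T_i$: every degree-zero element of $\mathcal{J}[T_i^{-1}]$ is an $R$-combination of elements $F/T_i^{\deg F}=F(T_1/T_i,\dots,T_m/T_i)$ with $F\in\mathcal{J}$ homogeneous, which becomes $F(T_1,\dots,1,\dots,T_m)$ after the substitution. Finally, since $\varphi$ sends $T_j/T_i$ to $f_j/f_i$, the induced isomorphism $(S/\mathcal{J})_{(\bar T_i)}\cong\mathcal{R}_R(I)_{(f_it)}=R[f_1/f_i,\dots,f_m/f_i]$ identifies $V_+(\mathcal{J})\cap D_+(T_i)$ with the $i$-th chart of part (1); hence that chart is cut out inside $\spec{R[T_1/T_i,\dots,T_m/T_i]}$ precisely by the dehomogenization of $\mathcal{J}$ at $T_i$.

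As this is essentially a textbook fact (see e.g.\ \cite{Cutk}), I do not expect a genuine obstacle. The two steps deserving care are the explicit identification $\mathcal{R}_R(I)_{(f_it)}=R[f_1/f_i,\dots,f_m/f_i]$ inside $R_{f_i}$, where one must invoke $I^k=(f_1,\dots,f_m)^k$, and the compatibility of dehomogenization with $\ker\varphi$, namely that $\mathcal{J}$ localized at $T_i$ and then restricted to degree zero is the dehomogenized ideal; both reduce to exactness of localization together with the decomposition of a graded module into its homogeneous pieces.
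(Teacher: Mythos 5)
Your proposal is correct. For part (1) you follow essentially the same route as the paper: both arguments cover $\mathrm{Proj}(\mathcal{R}_R(I))$ by the opens $D_+(f_it)$ and identify the coordinate ring $\mathcal{R}_R(I)_{(f_it)}$ with the subring $R[f_1/f_i,\dots,f_m/f_i]$ of $R_{f_i}$, using $I^k=(f_1,\dots,f_m)^k$.

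For part (2) your route differs genuinely in presentation, though not in substance. You invoke the general graded formalism: $V_+(\mathcal{J})\cap D_+(T_i)$ has coordinate ring $S_{(T_i)}/\mathcal{J}_{(T_i)}$ by exactness of localization and of taking degree-zero parts, and $\mathcal{J}_{(T_i)}$ is then identified with the dehomogenized ideal under $T_j/T_i\mapsto T_j$. The paper instead argues at the level of elements and generators: it fixes defining equations $G_1,\dots,G_s$ of the Rees algebra, checks directly that their dehomogenizations $g_i$ lie in the chart ideal $\widetilde{\mathcal{J}}$ (by substituting $f_j/f_i$ and clearing denominators), and conversely homogenizes an arbitrary $g\in\widetilde{\mathcal{J}}$ to a homogeneous $G\in\mathcal{J}$, expands $G=\sum_i G_iH_i$, and dehomogenizes back to get $g=\sum_i g_ih_i$. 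What the paper's version buys is the sharper, generator-level conclusion $\widetilde{\mathcal{J}}=(g_1,\dots,g_s)$, i.e.\ that the chart ideal is generated by the dehomogenizations of a chosen generating set of $\mathcal{J}$; this is the form actually exploited later (in the proof of Theorem~\ref{Rees Algebra Clutter}) to pass back and forth between the chart ideals $J_e$ and the defining ideal of the Rees algebra. What your version buys is brevity and independence from any choice of generators; it proves exactly the ideal-level statement asserted in the proposition, and the generator-level refinement would follow from it by the same one-line homogenize--expand--dehomogenize step the paper performs.
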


\begin{proof}
For $f\in I$, let $R[If^{-1}]$ be the subalgebra of $R_f$ which is generated by elements of the form $x/f^d$ with
$x\in I, d\in \n$. Such an element may be also considered as a degree zero element of $\mathcal{R}_R(I)_f$. In
fact, there is an $R$-algebra isomorphism  $R[If^{-1}]\simeq \mathcal{R}_R(I)_{(f)}$. The inverse map of this
isomorphism is given by considering $y/f^i$ for $y\in I^i$ as an element of $R[If^{-1}]$. We can see that if $f$
runs through a generating set of $I$, then we have
\[
\mathrm{Bl}_Z X=\bigcup_{i=1}^m\spec{\left[ {I}/{f^i} \right]}=\bigcup_{i=1}^m \spec{R \left[ \frac{f_j}{f_i}\colon \; 1\leq j\leq m \right]}. 
\]

Let $\widetilde{\mathcal{J}}$ stand for the defining ideal of the affine chart $R[f_2/f_1,\ldots,f_m/f_1]$, then
\[
\frac{R \left[ \frac{T_2}{T_1},\ldots,\frac{T_m}{T_1} \right]}{\widetilde{\mathcal{J}}} \simeq R \left[ \frac{f_2}{f_1},\cdots,\frac{f_m}{f_1} \right].
\]
Assume that $G_1(T_1,\ldots,T_m),\ldots, G_s(T_1,\ldots,T_m)$ with $\deg G_i=d_i$ are the defining equations of
the Rees algebra of $I$. We claim that $\widetilde{\mathcal{J}}=(g_1,\ldots, g_s)$ where
 $$g_i\left( \frac{T_2}{T_1},\ldots,\frac{T_m}{T_1} \right)=T_1^{-d_i}G_i.$$
Note that $g_i$ is the dehomogenization of $G_i$ with respect to the variable $T_1$. We first prove that $g_i\in
\widetilde{\mathcal{J}}$. It suffices to prove that $f_1^{d_i}g_i(f_2/f_1,\ldots,f_m/f_1)=0$. However
$f_1^{d_i}g_i(f_2/f_1,\ldots,f_m/f_1)=G_i(f_1,\ldots,f_m)=0$.

Now let  $g\in \widetilde{\mathcal{J}}$. Setting $d=\deg g$. Denote by $G$ the homogenization of $g$ with respect
to $T_1$, that is $G(T_1,\ldots,T_m)=T_1^dg(T_2/T_1,\ldots,T_m/T_1)$. Then
\[
G(f_1,\ldots,f_m)=f_1^d \ g\left( \frac{f_2}{f_1},\ldots,\frac{f_m}{f_1} \right)=0.\]
So that $G(T_1,\ldots, T_m)$ is a homogeneous polynomial in $\mathcal{J}$. Thus there are $H_1,\ldots, H_s$  with
$\deg H_i=d-d_i$ in $R[T_1,\ldots,T_m]$ such that $G=\sum_{i=1}^{s}G_iH_i$. Hence
\[
g=T_1^{-d}G(T_1,\ldots,T_m)=\sum_{i=1}^{s}T_1^{d_i-d}H_i(T_1,\ldots,T_m)\ g_i\left( \frac{T_2}{T_1},\ldots,\frac{T_m}{T_1} \right)
\]
and then letting $h_i(T_2/T_1,\ldots,T_m/T_1)=T_1^{d_i-d}H_i(T_1,\ldots,T_m)$, one has $g=\sum_{i=1}^{s}g_ih_i$.
\end{proof}

Assume that $\mathbb{A}_k^n=\spec{k[x_1,\ldots,x_n]}$ and let $Z$ be the subscheme defined by the ideal of
coordinate $I=(x_1,\ldots,x_m)$. Then, for $i=1,\ldots,m$
\[
k \left[ \mathbf{x} \right] \left[ \frac{x_1}{x_i},\ldots, \frac{x_m}{x_i} \right] \simeq
 \frac{k \left[ \mathbf{x} \right] \left[ T_1,\ldots,T_{i-1},T_{i+1},\ldots, T_m \right]}
 { \left( x_j-x_iT_j \colon \; j \neq i, \; j \leq m \right)}.
\]
The latter ring is isomorphic with the polynomial ring with $n$ variables. Therefore, we can see that in each
affine chart, the blowup $\mathrm{Bl}_Z \left( \mathbb{A}_k^n \right)$ is a regular affine variety. More generally, if the center
$Z$ given by the ideal $I$ is regular,  $I$ is generated by a regular system of parameters say $f_1,\ldots, f_m$,
then the defining ideal of the Rees algebra of $I$ is generated by $2\times 2$-minors of the matrix
\[
\begin{bmatrix}
	f_1 & f_2 & \ldots & f_m\\
	T_1 & T_2 & \ldots & T_m
\end{bmatrix}.
\]
Thus, like above in each affine chart $\mathrm{Bl}_Z \left( \mathbb{A}_k^n \right)$ is a polynomial ring and hence regular.

In general, as in the example in the introduction, blowing up  $\mathbb{A}_k^n$ in a non-regular subscheme may
produce singularity. One class of ideals which define non-regular schemes is the class of monomial ideals which
have at least one generator of degree at least 2. In this paper, we focus on such ideals. In the sequel, we set
$R=k[x_1,\ldots,x_n]$ and denote the blowup of $\mathbb{A}_k^n$ along a subscheme defined by a monomial ideal $I$
by $\mathrm{Bl}_I (R)$.

For an starting point to characterize squarefree tame monomial ideals, we need some results of \cite{FW} which
uses convex geometry. Set $\supp I= \{\textbf{a}\in \n^n \colon \; \textbf{x}^\textbf{a} \in I\}$ (here $\n$
denotes the set of non-negative integers), where $I$ is a monomial ideal, and let $N(I)$ be the convex hull of
$\supp I$. A point $\textbf{p}$ in $N(I)$ is said to be a \emph{vertex} when $\textbf{p}$ cannot be written as
$\lambda_1 \textbf{p}_1 + \lambda_2 \textbf{p}_2$, for some $0<\lambda_i<1$ with $\lambda_1+\lambda_2=1$ and
$\textbf{p}_1 \neq \textbf{p}_2\in N(I)$. Suppose that $\textbf{x}^\textbf{a} \in I$ for some $\textbf{a} \in
\n^n$. We say that $\textbf{x}^\textbf{a}$ is a vertex of $I$, when $\textbf{a}$ is a vertex of $N(I)$.

Let $\textbf{a}\in \supp I$, $u=\textbf{x}^\textbf{a}$ and $G(I)$ be the minimal set of generators of $I$. Then by
the $\textbf{a}$-chart (or as sometimes we call it, the $u$-chart) of the blowup $\Bl{I}$ of $\bbA_k^n$ along $I$,
we mean $\spec{k[U]}$ where $U=\{x_1, \ldots, x_n\}\cup\{\f{u'}{u}\colon\; u\neq u'\in G(I)\}$. Thus by Proposition~\ref{charts},
$\Bl{I}$ is covered by the $u$-charts ($u\in G(I)$). Faber and Westra presented a method to check the tameness of
a monomial ideal $I$ of $R$ in \cite[Theorem 12]{FW} in terms of convex geometry. Here we restate their result in
a more algebraic language.
\begin{prop}\label{FW crit}
Suppose that $u$ is a vertex of the monomial ideal $I$ of $R$. Then,
\begin{enumerate}
\item the $u$-chart of $\mathrm{Bl}_I (R)$ is
regular \ifof there is a $U'\se U$, with $|U'|=n$ such that $k[U]=k[U']$, where $U=\{x_1, \ldots, x_n\}\cup\{\f{u'}{u}\colon\; u\neq u'\in G(I)\}$.
\item $I$ is tame \ifof every chart of $\Bl{I}$ which correspond to a vertex of $I$ is regular.
\end{enumerate}
\end{prop}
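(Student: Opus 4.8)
The plan is to derive both parts of Proposition~\ref{FW crit} from the convex-geometric statement of \cite[Theorem 12]{FW}, translating the geometric conditions into the algebraic language of the charts $\spec{k[U]}$.

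First I would recall precisely what \cite[Theorem 12]{FW} says: for a vertex $u=\xx^{\aa}$ of $I$, the $u$-chart of $\Bl{I}$ is smooth if and only if a certain set of lattice vectors (the exponent differences $\bb-\aa$ for $\xx^{\bb}\in G(I)$, together with the standard basis vectors $e_1,\dots,e_n$) contains a $\Z$-basis (equivalently, after suitable interpretation, spans in the right way) of $\Z^n$; and $I$ is tame if and only if every such vertex chart is smooth. The first thing to establish for part (i) is the dictionary between that lattice condition and the ring-theoretic condition ``$\exists\, U'\se U$ with $|U'|=n$ and $k[U]=k[U']$''. The key observation is that each generator of $k[U]$ is a Laurent monomial in $x_1,\dots,x_n$: the $x_i$ are the monomials $e_i$, and each $u'/u$ with $\xx^{\bb}=u'$ is $\xx^{\bb-\aa}$. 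So $k[U]$ is the semigroup ring $k[\cl S]$ where $\cl S\se\Z^n$ is the affine semigroup generated by the finite set $A=\{e_1,\dots,e_n\}\cup\{\bb-\aa: \xx^{\bb}\in G(I),\ \xx^{\bb}\neq u\}$. Such a semigroup ring, with $n$ variables in play, is a polynomial ring over $k$ in $n$ variables exactly when $\cl S$ is free abelian of rank $n$ on a subset of its generators — i.e. when some $n$-element subset $U'\se U$ already generates the whole algebra. This is what makes ``smooth toric chart'' equivalent to ``$k[U]=k[U']$ for some $n$-subset $U'$'', and it is where I would match up with the Faber--Westra criterion term by term.

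For part (ii) the argument is then short. One direction is immediate from Proposition~\ref{charts}: $\Bl{I}$ is covered by the $u$-charts for $u\in G(I)$, so if $I$ is tame (i.e.\ $\Bl{I}$ regular) then in particular every vertex chart is regular. For the converse I would use the standard fact, already implicit in \cite{FW}, that a non-vertex generator $u$ of $I$ gives a chart that is contained in (or redundant with respect to) the union of the vertex charts: if $u=\xx^{\aa}$ with $\aa=\sum\lambda_i\aa_i$ a proper convex combination of other points of $N(I)$, then on the $u$-chart the relevant fractions $u'/u$ can be expressed using fractions living on vertex charts, so the $u$-chart adds nothing to the cover. Hence if all vertex charts are regular, the whole of $\Bl{I}$ is regular, and $I$ is tame. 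I would phrase this by invoking the corresponding statement in \cite[Theorem 12]{FW} rather than re-proving the convex-geometry lemma.

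The main obstacle I expect is the bookkeeping in part (i): making the identification $k[U]\cong k[\cl S]$ fully rigorous and then checking that ``$\cl S$ is a free semigroup of rank $n$ on part of its generating set'' is literally the same as the condition appearing in \cite[Theorem 12(ii)]{FW}. In particular one must be careful that $|U'|=n$ forces $U'$ to be algebraically independent (so that $k[U']$ is a genuine polynomial ring and the chart is smooth of the correct dimension), and conversely that regularity of the affine toric chart of dimension $n$ forces the generators to collapse to such a $U'$; the unimodularity/basis condition from convex geometry is exactly what guarantees this. Once that translation is pinned down, everything else is formal.
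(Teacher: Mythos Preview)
Your proposal is correct in spirit and follows the same basic plan as the paper: both parts are deduced from \cite[Theorem 12]{FW}. The difference lies in how part (i) is executed.

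You set up the dictionary via affine semigroups: $k[U]=k[\cl S]$ for the monoid $\cl S$ generated by the exponent vectors, and then you want to say that smoothness of the toric chart is equivalent to $\cl S$ being free on an $n$-element subset of the given generating set $A$. You correctly flag the delicate point: even once one knows the chart is smooth (so $k[U]$ is a polynomial ring in $n$ variables), why must those $n$ variables already lie in $U$? Your sketch does not quite close this gap.

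The paper short-circuits this translation. It quotes \cite[Theorem 12(ii)]{FW} in the form ``the $u$-chart is regular iff the minimal monomial generating set of $k[U]$ has exactly $n$ elements'', then invokes \cite[Lemmata 6 and 7]{FW} to get that this minimal monomial generating set is \emph{unique}, and \cite[Lemma 9]{FW} to get that it has at least $n$ elements. Uniqueness immediately forces the minimal set to be obtained from $U$ by discarding redundant elements, hence $U'\subseteq U$. So the paper never needs the abstract ``free monoid on a subset of its generators'' argument; it just uses that Laurent-monomial algebras have a canonical minimal monomial generating set. Your obstacle is exactly what those three lemmas of \cite{FW} dispose of, and citing them would make your argument complete and shorter.

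For part (ii) you give more detail than the paper (which simply cites \cite[Theorem 12(i)]{FW}), sketching why non-vertex charts are redundant in the cover. That explanation is fine, though strictly unnecessary once one is willing to quote \cite{FW}.
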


\begin{proof}
(i) Since $u$ is a vertex of $I$, it follows \cite[Theorem 12(ii)]{FW} that the $u$-chart ($=\spec{k[U]}$) is regular
\ifof the minimal set of monomials $U'\se S$ such that $U'$ generates $k[U]$ as a $k$-algebra, has exactly
cardinality $n$. Note that this set always has at least $n$ elements by \cite[Lemma 9]{FW}. Also this minimal set
is unique by Lemmata 6 and 7 of \cite{FW}. Consequently, we can obtain $U'$ from $U$ by deleting monomials $u\in
U$ which can be written as a product of monomials in $U$ different from $u$. In particular, $U'\se U$ as required.

Statement (ii) follows from \cite[Theorem 12(i)]{FW}.
\end{proof}
%
%


\section{Tame Clutters} \label{Section Tame clutters}
In this section, we present a full combinatorial characterization of tame squarefree monomial ideals. Recall that
a \emph{clutter} $\cc$ on vertex set $[n]=\{1,\ldots, n\}$, is a family of incomparable subsets of $[n]$ (ordered
by inclusion). Throughout this paper, $\cc$ denotes a clutter on $[n]$. The elements of $\cc$ are called\emph{
circuits} of $\cc$ and $\cc$ is called\emph{ $d$-uniform} when all of its circuits have cardinality $d$. Let $F\se
[n]$. By $\textbf{x}_F$ we mean $\prod_{i\in F} x_i$ in the polynomial ring $R$. Also the \emph{circuit ideal} of
$\cc$ is $I \left( \cc \right)= \left( \textbf{x}_F \colon \; F\in \cc \right)$. Thus there is a one-to-one
correspondence between squarefree monomial ideals of $R$ and clutters on $[n]$. We say that $\cc$ is \textit{tame}
when $I(\cc)$ is so. Moreover, for a subset $A \se [n]$ we call the set $\N(A)= \N_\cc(A)= \l\{v\in [n]\sm A \colon
\; \exists e\in \cc \text{ s.t. } A \cup\{v\}\se e \r\}$, the \emph{open neighborhood} of $f$ in $\cc$.

Let $d'\geq d$ be two positive integers. Following \cite{d-partite}, we say that a $d$-uniform clutter $\cc$ is
\emph{$d'$-partite}, if the set of vertices can be written as the union of mutually disjoint subsets $V_1, \ldots,
V_{d'}$, such that each circuit of $\cc$ meets each $V_i$ in at most one vertex. If moreover, $\cc$ contains all
$d$-subsets of $[n]$ which intersect each $V_i$ in at most one vertex, we say that $\cc$ is \emph{complete
$d'$-partite}. The partition $\{V_i\colon \; i \in [d'] \}$ as above is called a \emph{$d'$-partition} of $\cc$.
Here we prove that $\cc$ is tame \ifof $\cc$ a complete $d$-partite $d$-uniform clutter.

Not all the minimal generators of a monomial ideal $I$ need be vertices. For example if $I=(x_1^2,x_2^2,x_1x_2)$,
then obviously $x_1x_2$ is not a vertex of $I$. But as the following lemma shows, in the squarefree case, the
vertices of $I$ are exactly the minimal generators of $I$.

\begin{lem}\label{vertex=gen}
Suppose that $I$ is a squarefree monomial ideal of $R$ with minimal generating set $\mathcal{G}(I)=\{ \textbf{\rm{\textbf{x}}}^{\textbf{\rm{\textbf{a}}}_i}\colon \; i\in [s] \}$ and let $\textbf{\rm{\textbf{a}}} \in N(I)$. Then $\textbf{\rm{\textbf{a}}}$ is a vertex of $N(I)$ \ifof $\textbf{\rm{\textbf{x}}}^\textbf{\rm{\textbf{a}}} \in \mathcal{G}(I)$.
\end{lem}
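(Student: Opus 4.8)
The plan is to prove both implications directly from the definitions, exploiting the fact that for a squarefree monomial ideal every minimal generator is a $0$--$1$ vector. First I would note that $\textbf{x}^{\textbf{a}_i}\in\G(I)$ means $\textbf{a}_i\in\{0,1\}^n$, and that $N(I)=\mathrm{conv}(\supp I)=\mathrm{conv}(\{\textbf{a}_i\colon i\in[s]\}+\n^n)$, since adding any point of $\n^n$ to a point of $\supp I$ stays in $\supp I$, and $\supp I$ is generated under this operation by $\G(I)$.

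\emph{($\Leftarrow$)} Suppose $\textbf{x}^{\textbf{a}}\in\G(I)$, say $\textbf{a}=\textbf{a}_1$. I want to show $\textbf{a}_1$ cannot be written as $\lambda_1\textbf{p}_1+\lambda_2\textbf{p}_2$ with $0<\lambda_i<1$, $\lambda_1+\lambda_2=1$ and $\textbf{p}_1\neq\textbf{p}_2$ in $N(I)$. The key point is to produce a supporting hyperplane: consider the linear functional $\ell(\textbf{p})=\sum_{j\in\mathrm{supp}(\textbf{a}_1)} p_j$, where $\mathrm{supp}(\textbf{a}_1)=\{j\colon (\textbf{a}_1)_j=1\}$. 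Then $\ell(\textbf{a}_1)=|\mathrm{supp}(\textbf{a}_1)|=:d_1$. For any generator $\textbf{a}_i$ we have $\ell(\textbf{a}_i)\geq 1$ since $\textbf{x}^{\textbf{a}_i}$ and $\textbf{x}^{\textbf{a}_1}$ are incomparable, hence... actually the cleaner functional is $\ell(\textbf{p})=\sum_j c_j p_j$ with $c_j=-1$ for $j\in\mathrm{supp}(\textbf{a}_1)$ and $c_j=0$ otherwise; then $\ell(\textbf{a}_1)=-d_1$, while for every other generator $\textbf{a}_i$ minimality forces $\textbf{a}_i\not\le\textbf{a}_1$, so $\mathrm{supp}(\textbf{a}_i)\not\subseteq\mathrm{supp}(\textbf{a}_1)$, whence $\ell(\textbf{a}_i)>-d_1$ — no, this only gives $\ell(\textbf{a}_i)\ge -d_1+1$ when $\textbf{a}_i$ has all its $1$'s inside $\mathrm{supp}(\textbf a_1)$, which is impossible. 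And for any point $\textbf{p}=\textbf{a}_i+\textbf{n}$ with $\textbf{n}\in\n^n$ we have $\ell(\textbf{p})\ge\ell(\textbf{a}_i)\ge -d_1$ with equality iff $\textbf{a}_i=\textbf a_1$ and $\textbf n$ is supported off $\mathrm{supp}(\textbf a_1)$; since $\textbf a_1$ is already the minimum in its support-coordinates, equality forces $\textbf p=\textbf a_1$. Passing to convex combinations, $\ell$ attains its minimum over $N(I)$ uniquely at $\textbf{a}_1$, so $\textbf{a}_1$ is a vertex.

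\emph{($\Rightarrow$)} Suppose $\textbf{a}\in N(I)$ is a vertex of $N(I)$; I must show $\textbf{x}^{\textbf{a}}\in\G(I)$. By Carath\'eodory / the structure of $N(I)$, write $\textbf{a}=\sum_i\mu_i(\textbf{a}_i+\textbf{n}_i)$ with $\mu_i\ge 0$, $\sum\mu_i=1$, $\textbf{n}_i\in\n^n$; being a vertex, $\textbf{a}$ must equal one of the points $\textbf{a}_i+\textbf{n}_i$ appearing with $\mu_i>0$, i.e.\ $\textbf{a}=\textbf{a}_i+\textbf{n}_i$ for some $i$ and some $\textbf{n}_i\in\n^n$. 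If $\textbf{n}_i\ne 0$, pick a coordinate $j$ with $(\textbf{n}_i)_j\ge 1$; then $\textbf{a}=\tfrac12(\textbf{a}-e_j)+\tfrac12(\textbf{a}+e_j)$ with both $\textbf{a}\pm e_j\in\supp I\subseteq N(I)$ (here $\textbf a - e_j \ge \textbf a_i \in \supp I$ coordinatewise, so it lies in $\supp I$), contradicting that $\textbf{a}$ is a vertex. Hence $\textbf{n}_i=0$ and $\textbf{a}=\textbf{a}_i\in\G(I)$.

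\emph{Main obstacle.} The routine but slightly delicate point is pinning down the exact structure $N(I)=\mathrm{conv}\big(\bigcup_i(\textbf{a}_i+\n^n)\big)$ and the fact that a vertex of such a set must actually be one of the generating lattice points $\textbf{a}_i$ (not merely a limit or a combination), together with getting the supporting-functional argument in ($\Leftarrow$) stated cleanly; I expect that is where the write-up needs care, whereas the rest is bookkeeping with $0$--$1$ vectors and coordinatewise order. I would also remark that squarefreeness is used only in ($\Leftarrow$), to guarantee each $\textbf{a}_i$ is already minimal in the coordinates of its own support, and that the example $(x_1^2,x_2^2,x_1x_2)$ in the surrounding text shows this hypothesis cannot be dropped.
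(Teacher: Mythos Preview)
Your $(\Rightarrow)$ direction is correct and essentially identical to the paper's: once a vertex $\textbf{a}$ of $N(I)$ is known to lie in $\supp{I}$, so that $\textbf{a}=\textbf{a}_i+\textbf{n}$ for some $\textbf{n}\in\n^n$, the midpoint decomposition $\textbf{a}=\tfrac12(\textbf{a}-\textbf{e}_j)+\tfrac12(\textbf{a}+\textbf{e}_j)$ forces $\textbf{n}=0$.

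Your $(\Leftarrow)$ direction, however, has a genuine gap. With $c_j=-1$ for $j\in\mathrm{supp}(\textbf{a}_1)$ and $c_j=0$ otherwise, the functional $\ell$ is \emph{unbounded below} on $N(I)$: for any $j\in\mathrm{supp}(\textbf{a}_1)$ and any $k\geq 0$ the point $\textbf{a}_1+k\textbf{e}_j$ lies in $\supp{I}\subseteq N(I)$ and has $\ell$-value $-d_1-k$. The inequality you assert, $\ell(\textbf{a}_i+\textbf{n})\geq \ell(\textbf{a}_i)$, goes the wrong way, since $\ell(\textbf{n})\leq 0$ for $\textbf{n}\in\n^n$ with these coefficients. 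So this $\ell$ does not exhibit $\textbf{a}_1$ as a unique minimizer and the argument collapses. (Even the equality case you describe fails: $\textbf{p}=\textbf{a}_1+\textbf{n}$ with $\textbf{n}$ supported off $\mathrm{supp}(\textbf{a}_1)$ already gives $\ell(\textbf{p})=-d_1$ without forcing $\textbf{n}=0$.) The supporting-functional idea is salvageable if you take all $c_j>0$: for instance $c_j=1$ off $\mathrm{supp}(\textbf{a}_1)$ and $c_j=\epsilon$ on it with $0<\epsilon<1/d_1$; then $\ell(\textbf{n})>0$ for $\textbf{n}\neq 0$, and for $i\neq 1$ minimality gives $\mathrm{supp}(\textbf{a}_i)\not\subseteq\mathrm{supp}(\textbf{a}_1)$, so $\ell(\textbf{a}_i)\geq 1>\epsilon d_1=\ell(\textbf{a}_1)$, making $\textbf{a}_1$ the unique minimizer.

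For comparison, the paper's $(\Leftarrow)$ avoids supporting hyperplanes altogether. Assuming $\textbf{a}_1=\sum_j\lambda_j(\textbf{a}_{i_j}+\textbf{a}'_j)$ is a nontrivial convex combination of points of $\supp{I}$, one argues that some $i_j\neq 1$ must occur (say $i_1=2$), whence $\textbf{a}_1\geq\lambda_1\textbf{a}_2$ coordinatewise; choosing $d\in\n$ with $d\lambda_1>1$ gives $(\textbf{x}^{\textbf{a}_1})^d\in R\textbf{x}^{\textbf{a}_2}$, hence $\textbf{x}^{\textbf{a}_1}\in\sqrt{R\textbf{x}^{\textbf{a}_2}}=R\textbf{x}^{\textbf{a}_2}$ by squarefreeness, contradicting minimality of $\mathcal{G}(I)$.
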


\begin{proof}
(\give): Clearly $\textbf{a}\in \supp I$. If $\textbf{x}^{\textbf{a}} \notin \mathcal{G}(I)$, then $\textbf{a}=\textbf{a}_i+ \textbf{b}$, for some $i\in [s]$ and some non-zero vector
$\textbf{b} \in \n^n$. Assume that $b_1>0$. Then $\textbf{a} =\f{1}{2}(\textbf{a}_i+ \textbf{b} + \textbf{e}_1)+\f{1}{2}(\textbf{a}_i + \textbf{b} -\textbf{e}_1)$ where $\textbf{e}_i$'s are the standard basis of $\z^n$. Thus $\textbf{a}$ is not a vertex, a contradiction.

(\rgive): We show that for example $\textbf{a}_1$ is a vertex of $N(I)$. On the contrary, assume that
$\textbf{a}_1=\lambda_1 \textbf{p}_1+\lambda_2 \textbf{p}_2$, for some $0<\lambda_i<1$ with
$\lambda_1+\lambda_2=1$ and $\textbf{p}_1\neq \textbf{p}_2\in N(I)$. Replacing $\textbf{p}_i$'s with their convex
combinations of elements of $\supp I$, it follows that $\textbf{a}_1=\sum_{j=1}^t \lambda_j (\textbf{a}_{i_j}
+\textbf{a}'_j)$ where $t\geq 2$,  $0 < \lambda_j<1$, $i_j \in [s]$ and $\textbf{a}'_j \in \n^n$ for all $j$,
$\sum_{j=1}^t \lambda_j=1$ and the points $\textbf{p}_j=(\textbf{a}_{i_j} +\textbf{a}'_j)$ are mutually distinct.

If for all $j$, we have $i_j=1$, then it easily follows that $\textbf{a}'_j=0$ for all $j$, contradicting the
mutually distinctness of $\textbf{p}_j$'s. Thus there exists $j\in [t]$, with $i_j \neq 1$, say $j=1$ and $i_1=2$.
There is $d\in \n$ such that $d\lambda_1>1$. Then for each $i\in [n]$, the $i$'th component of $d\textbf{a}_1$ is
at least equal to the $i$'th component of $\textbf{a}_2$. So $(\textbf{x}^{\textbf{a}_1})^d\in
R\textbf{x}^{\textbf{a}_2}$ and $\textbf{x}^{\textbf{a}_1} \in \sqrt{R\textbf{x}^{\textbf{a}_2}}=
R\textbf{x}^{\textbf{a}_2}$, for $\textbf{x}^{\textbf{a}_2}$ is squarefree. But this is in contradiction with the
minimality of the generating set $\mathcal{G}(I)$ and the result is concluded.
\end{proof}

When $e_0\in \cc$, by the $e_0$-chart of $\Bl{I(\cc)}$ we mean the $\textbf{x}_{e_0}$-chart, that is,
$\spec{k[U]}$ where $U$ is as in the notes above Lemma~\ref{FW crit} with $u=\textbf{x}_{e_0}$. An immediate
consequence of Lemmata~\ref{FW crit} and \ref{vertex=gen} is the following.
\begin{cor}\label{tame crit for sq frees}
Assume that $e_0$ and $U$ are as above and $S=k[U]$. Then $\spec S$ is regular \ifof there is a subset $U'\se U$
with $|U'|=n$ such that $S=k[U']$.
\end{cor}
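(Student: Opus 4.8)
The plan is simply to splice together the two preceding results, the only subtlety being that the Faber--Westra criterion in Proposition~\ref{FW crit}(i) is phrased for a \emph{vertex} $u$ of the ideal, so one must first check that $\textbf{x}_{e_0}$ is such a vertex. First I would note that since $e_0\in\cc$, the monomial $\textbf{x}_{e_0}$ is a minimal generator of the squarefree monomial ideal $I(\cc)$; hence by Lemma~\ref{vertex=gen} (this is the sole point at which squarefreeness enters) the point $\textbf{a}$ with $\textbf{x}^{\textbf{a}}=\textbf{x}_{e_0}$ is a vertex of $N(I(\cc))$, i.e. $u=\textbf{x}_{e_0}$ is a vertex of $I(\cc)$.

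Next I would unwind the definitions. By the convention recalled just before Corollary~\ref{tame crit for sq frees}, the $e_0$-chart of $\Bl{I(\cc)}$ is the $\textbf{x}_{e_0}$-chart, namely $\spec{k[U]}=\spec S$ with $U=\{x_1,\dots,x_n\}\cup\{\f{u'}{u}\colon\; u\neq u'\in\mathcal{G}(I(\cc))\}$ and $u=\textbf{x}_{e_0}$. Since $u$ is a vertex of $I(\cc)$, Proposition~\ref{FW crit}(i) applies verbatim and states exactly that this chart $\spec S$ is regular \ifof there is a subset $U'\se U$ with $|U'|=n$ such that $k[U]=k[U']$, that is, $S=k[U']$. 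This is the claimed equivalence, so the proof is complete.

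I do not expect any real obstacle here: all the mathematical content is already contained in Proposition~\ref{FW crit} (which itself repackages \cite[Theorem~12]{FW}) and in Lemma~\ref{vertex=gen}. The only thing to verify is the purely notational matching of the ``$e_0$-chart'' with the ``$u$-chart for $u=\textbf{x}_{e_0}$'' and the fact that, in the squarefree setting, \emph{every} circuit $e_0\in\cc$ yields a vertex (so that no minimal generator is excluded); both are immediate. Consequently I would present this as a one-paragraph proof rather than a multi-step argument.
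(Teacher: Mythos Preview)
Your proposal is correct and follows exactly the approach the paper intends: the paper simply states that Corollary~\ref{tame crit for sq frees} is an immediate consequence of Proposition~\ref{FW crit} and Lemma~\ref{vertex=gen}, and you have spelled out precisely how those two results combine (namely, Lemma~\ref{vertex=gen} guarantees $\textbf{x}_{e_0}$ is a vertex so that Proposition~\ref{FW crit}(i) applies to the $e_0$-chart). There is nothing to add.
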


Now we can apply Corollary~\ref{tame crit for sq frees} to find a combinatorial characterization of tame clutters. But first we need some lemmas.

\begin{lem}\label{gen in general cluts}
Let $e_0\in \cc$, $U_1=\{x_1,\ldots, x_n\}$, $U_2=\{\f{\textbf{\rm \textbf{x}}_e}{\textbf{\rm \textbf{x}}_{e_0}}
\colon \; e_0 \neq e \in \cc \}$ and $U=U_1 \cup U_2$. Suppose that the $e_0$-chart of $\Bl{I(\cc)}$ is regular
and $U'$ is as in  Corollary~\ref{tame crit for sq frees}. Then,
\begin{enumerate}
\item for each $x_j \in U_1 \sm U'$, there exists a nonempty set $\pi(j) \se e_0$ such that  $U' = \left( U_1
    \cap U' \right) \cup \{ \f{x_j}{\textbf{\rm{\textbf{x}}}_{\pi(j)}} \colon \; x_j \in U_1 \sm U'\}$;
\item for each $j \in e_0$, one has $x_j\in  U'$.
\end{enumerate}
\end{lem}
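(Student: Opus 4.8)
The plan is to analyze the $k$-algebra $S = k[U]$ and its unique minimal generating set $U'$ (minimal by Lemmata 6 and 7 of \cite{FW}, as quoted in the proof of Proposition~\ref{FW crit}), exploiting the fact that every element of $U$ is either a variable $x_j$ or a ratio $\f{\textbf{x}_e}{\textbf{x}_{e_0}}$ which, since $\cc$ is $d$-uniform (all circuits have size $|e_0|$), is a Laurent monomial $\prod_{i\in e\sm e_0}x_i \big/ \prod_{i\in e_0\sm e}x_i$. The key structural observation is that $U'$ is obtained from $U$ by repeatedly deleting any monomial that is a product of \emph{other} monomials still in the set (this is exactly the description of $U'$ given in the proof of Proposition~\ref{FW crit}(i)).

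For part (i): take $x_j\in U_1\sm U'$. Since $x_j$ was deleted, it must factor as a product of monomials of $U$ other than $x_j$. Because $x_j$ has total degree $1$ and lies only in the single variable $x_j$ (no negative exponents, no other variables), in any such product at most one factor can involve $x_j$ with positive exponent, and that factor must be a $U_2$-element of the form $\f{\textbf{x}_e}{\textbf{x}_{e_0}}$ with $j\in e\sm e_0$; the remaining factors together must contribute $\textbf{x}_{e_0\sm e}$ (possibly $x_j$ appears in $e_0$? no, $j\in e\sm e_0$ forces $j\notin e_0$), i.e. they supply the denominator variables $\textbf{x}_{e_0\sm e}$ that must be cancelled, so these are among $U_1\cap U'$. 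Setting $\pi(j) = e_0\sm e \subseteq e_0$ (nonempty since $x_j\in U_2$-numerators cannot be a unit, so $e\neq e_0$ and the ratio is non-constant), one gets $x_j = \textbf{x}_{\pi(j)}\cdot\f{\textbf{x}_e}{\textbf{x}_{e_0}}$, whence $\f{x_j}{\textbf{x}_{\pi(j)}} = \f{\textbf{x}_e}{\textbf{x}_{e_0}}\in U_2\subseteq U$. One then has to argue that after deleting all such $x_j$ from $U_1$, the surviving set $(U_1\cap U')\cup\{\f{x_j}{\textbf{x}_{\pi(j)}}\}$ still generates $S$ (clear, since each deleted $x_j$ is recovered as $\textbf{x}_{\pi(j)}\cdot\f{x_j}{\textbf{x}_{\pi(j)}}$) and that it is contained in $U'$ — here one uses uniqueness of the minimal generating set together with a counting/containment argument showing no further deletions are possible, matching the stated form of $U'$. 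The main care needed is that the $\pi(j)$ can be chosen \emph{inside} $e_0$ and that the recovered generator $\f{x_j}{\textbf{x}_{\pi(j)}}$ genuinely lies in $U$ (not merely in $S$); the $d$-uniformity of $\cc$ is what makes the arithmetic of exponents force $\pi(j)\subseteq e_0$.

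For part (ii): suppose $j\in e_0$ but $x_j\notin U'$. By part (i), $x_j = \textbf{x}_{\pi(j)}\cdot\f{\textbf{x}_e}{\textbf{x}_{e_0}}$ for some $e\in\cc$ with $j\in e\sm e_0$ — but $j\in e_0$ contradicts $j\in e\sm e_0$. More directly: in the Laurent-monomial factorization of $x_j$ through elements of $U\sm\{x_j\}$, the variable $x_j$ with $j\in e_0$ appears with exponent $0$ in every $U_2$-generator $\f{\textbf{x}_e}{\textbf{x}_{e_0}}$ unless $j\in e$, and if $j\in e$ then since $j\in e_0$ this ratio has $x_j$-exponent $0$ as well; so no element of $U\sm\{x_j\}$ contains $x_j$ at all, making it impossible to write $x_j$ as such a product. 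Hence $x_j\in U'$.

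**Expected main obstacle.** The delicate point is part (i): extracting the precise claim that a \emph{single} $U_2$-factor suffices and that its attached denominator set $\pi(j)$ can be taken as a subset of $e_0$, together with verifying that the resulting set is exactly $U'$ (not just a generating set). This requires careful bookkeeping with the exponent vectors — using that every generator has entries in $\{-1,0,1\}$ with the $-1$'s supported on $e_0$ — and an appeal to the uniqueness of the minimal generating set from \cite{FW}; the rest is essentially formal.
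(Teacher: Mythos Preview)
Your overall strategy matches the paper's: factor $x_j$ through the generating set, isolate a single $U_2$-factor whose numerator is $x_j$, and read off $\pi(j)$ from its denominator; then use $|U'|=n$ to conclude the displayed set equals $U'$. However, there are two genuine problems.

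\textbf{You assume $\cc$ is $d$-uniform, but the lemma does not.} This lemma is stated for an arbitrary clutter (uniformity is only added in the \emph{next} lemma), and it is later applied in Lemma~\ref{min size} to clutters that are explicitly non-uniform. Moreover, your claim that ``the $d$-uniformity of $\cc$ is what makes the arithmetic of exponents force $\pi(j)\subseteq e_0$'' is simply wrong: once you know the relevant $U_2$-factor equals $\f{\textbf{x}_e}{\textbf{x}_{e_0}}$ with $e\sm e_0=\{j\}$, its denominator in lowest terms is $\textbf{x}_{e_0\sm e}$, so $\pi(j)=e_0\sm e\se e_0$ automatically. Uniformity plays no role here.

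\textbf{The ``single $U_2$-factor'' step is not established.} You argue that at most one factor has positive $x_j$-exponent, but you never explain why there are no \emph{other} $U_2$-factors (ones not involving $x_j$). The paper handles this as follows: write $x_j=u_1\cdots u_t\,x_{i_1}\cdots x_{i_r}$ with $u_i=\f{\textbf{x}_{e_i}}{\textbf{x}_{e_0}}\in U'$. For any $s\notin e_0\cup\{j\}$, the variable $x_s$ can never appear in a denominator (denominators live in $e_0$), so comparing $x_s$-exponents forces $e_i\se e_0\cup\{j\}$ for every $i$. Now the \emph{clutter} property (incomparability of circuits) gives $e_i\not\se e_0$, hence $j\in e_i$ for all $i$; combined with ``at most one $e_i$ contains $j$'' this yields $t=1$. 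You never invoke the clutter property, and without it the reduction to one $U_2$-factor does not go through.

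Two smaller points: (a) you factor $x_j$ through $U\sm\{x_j\}$ rather than through $U'$; the paper factors through $U'$ directly, which immediately places $\f{x_j}{\textbf{x}_{\pi(j)}}$ in $U'$ and makes the counting argument $|U''|=n=|U'|$ clean. (b) In your argument for (ii), the $x_j$-exponent of $\f{\textbf{x}_e}{\textbf{x}_{e_0}}$ when $j\in e_0\sm e$ is $-1$, not $0$; fortunately your conclusion only needs the exponent to be $\le 0$, so this is a slip rather than a gap.
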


\begin{proof}
(i): It is sufficient to show that for each $x_j\in U_1\sm U'$, there is a $\pi(j)\se e_0$ such that
$\f{x_j}{\textbf{x}_{\pi(j)}}\in U'$, since then $U''= \left( U_1 \cap U' \right) \cup
\{\f{x_j}{\textbf{x}_{\pi(j)}}\colon \;x_j \in U_1 \sm U' \} \se U'$ and $|U''|=|U_1|=n=|U'|$ (by
Corollary~\ref{tame crit for sq frees}) and the result follows.

Assume that $x_j \notin U'$ for some $j \in [n]$. Then as $k[U]=k[U']$, we can write $x_j$ as a product
\begin{equation} \label{star}
x_j= u_1u_2\cdots u_t x_{i_1} \cdots x_{i_r},
\end{equation}
where $u_i=\f{\textbf{x}_{e_i}}{\textbf{x}_{e_0}} \in U'\cap U_2$. If $x_s|\textbf{x}_{e_i}$,
then we should have $s \in e_0\cup \{j\}$, else $x_s$ does not cancel out and should appear in the left hand side of (\ref{star}), which is not the case. A similar argument shows that at most one of the $e_i$'s contain $j$, and since none of the $e_i$'s is contained in $e_0$, we get $t\leq 1$. Clearly $t\geq 1$, hence $t=1$ and $u_1=\f{x_j}{x_{i_1}\cdots x_{i_r}}$. Set $\pi(j)= \{i_1, \ldots, i_r\}$ which is clearly nonempty. Since $u_1\in U'\se U$, we deduce that $\textbf{x}_{\pi(j)}|\textbf{x}_{e_0}$ which means $\pi(j)\se e_0$, as claimed.

(ii): Note that, if $j\in e_0$ and $x_j\notin U'$, then in (\ref{star}) all $e_i$'s should be contained in $e_0$
which is not possible (since $\cc$ is a clutter). Thus for $j \in e_0$, $x_j\in U'$.
\end{proof}

In what follows, when we say that a monomial $u$ of $R= k[x_1,\ldots, x_n]$ divides the numerator [resp.
denominator] of $q=\f{p_1}{p_2}$ where $p_1$ and $p_2$ are monomials of $R$, we mean that $u$ divides the
numerator [resp. denominator] of $q$ when $q$ is written in the simplest form. Also we consider $k(x_1, \ldots,
x_n)$ in its standard grading, that is, $\deg q=\deg p_1-\deg p_2$. Furthermore, an \emph{isolated vertex} of a
clutter $\cc$ means a vertex which has not appeared in any circuit of $\cc$.

\begin{lem}\label{gen in uniform cluts}
Suppose that the conditions of Lemma~\ref{gen in general cluts} holds and for simplicity assume $e_0=[d]$. Moreover, assume that $\cc$ is $d$-uniform and without any isolated vertex. Then for each $d<j\leq n$, there exists a $v(j) \leq d$ such that $U'= \{x_1, \ldots, x_d\}\cup \{\f{x_j}{x_{v(j)}}\colon \; d<j\leq n\}$.
\end{lem}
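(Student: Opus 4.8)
The plan is to start from the description of $U'$ given in Lemma~\ref{gen in general cluts}, namely $U' = (U_1\cap U')\cup\{x_j/\textbf{x}_{\pi(j)}\colon x_j\in U_1\sm U'\}$ with each $\pi(j)\se e_0 = [d]$, and to refine it under the extra hypotheses ($d$-uniformity and no isolated vertex). By part (ii) of that lemma we already know $x_1,\dots,x_d\in U'$, so it only remains to understand the coordinates $x_j$ with $d<j\le n$: I must show each such $x_j$ lies in $U_1\sm U'$, and that the corresponding $\pi(j)$ is a single vertex $v(j)\le d$.

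First I would fix $j$ with $d<j\le n$ and show $x_j\notin U'$. Since $\cc$ has no isolated vertex, $j$ belongs to some circuit $e\in\cc$; as $j\notin e_0=[d]$, necessarily $e\ne e_0$, so $\textbf{x}_e/\textbf{x}_{e_0}\in U_2$. If $x_j$ were in $U'$, then since $k[U']=k[U]$ contains $\textbf{x}_e/\textbf{x}_{e_0}$ I would derive a contradiction with the minimality/uniqueness of the generating set $U'$ (Corollary~\ref{tame crit for sq frees} and the underlying Lemmata 6,7,9 of \cite{FW}): concretely, $x_j$ would be expressible as a product of other members of $U'$, and tracing which variables must cancel — exactly as in the proof of Lemma~\ref{gen in general cluts}, equation~(\ref{star}) — forces all circuits involved to lie inside $e_0\cup\{j\}$ and eventually inside $e_0$, which is impossible since $\cc$ is a clutter and $e\not\subseteq e_0$. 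Hence $x_j\in U_1\sm U'$ for every $j>d$, and so $U_1\cap U' = \{x_1,\dots,x_d\}$.

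Next, for such $j$, Lemma~\ref{gen in general cluts}(i) already gives a nonempty $\pi(j)\se[d]$ with $x_j/\textbf{x}_{\pi(j)}\in U'$. I must show $|\pi(j)|=1$. The key point is a degree (grading) count: $x_j/\textbf{x}_{\pi(j)}$ has degree $1-|\pi(j)|$. On the other hand, every element of $U_2$ is a quotient $\textbf{x}_e/\textbf{x}_{e_0}$ of two squarefree monomials of the same degree $d$ (here $d$-uniformity enters), hence has degree $0$; and the elements of $U_1$ have degree $1$. Since $U'\se U = U_1\cup U_2$, any element of $U'$ has degree $0$ or $1$. Therefore $1-|\pi(j)|\in\{0,1\}$, forcing $|\pi(j)|\le 1$, and since $\pi(j)$ is nonempty, $|\pi(j)|=1$; write $\pi(j)=\{v(j)\}$ with $v(j)\le d$. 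Combining, $U' = \{x_1,\dots,x_d\}\cup\{x_j/x_{v(j)}\colon d<j\le n\}$, which is the claim.

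The main obstacle is the first step — proving $x_j\notin U'$ — since it is the place where one must genuinely invoke the uniqueness of the minimal algebra generating set and re-run the cancellation bookkeeping of~(\ref{star}) in this slightly different configuration; the rest is a clean degree argument exploiting $d$-uniformity. I would be careful to state precisely why an expression of $x_j$ as a product of other elements of $U'$ leads to circuits contained in $e_0$, mirroring part (ii) of Lemma~\ref{gen in general cluts} but now with the roles of ``$x_j$ should appear on the left-hand side'' adapted to the case $j>d$.
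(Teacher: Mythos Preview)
Your second step --- the degree count showing $|\pi(j)|=1$ --- is correct and is exactly what the paper does.

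The first step, however, has a genuine gap. You propose to assume $x_j\in U'$ and then ``express $x_j$ as a product of other members of $U'$,'' mirroring the argument around equation~(\ref{star}) in Lemma~\ref{gen in general cluts}(ii). But that argument starts from the hypothesis $x_j\notin U'$; it is precisely the assumption that $x_j$ is \emph{not} in the minimal generating set that allows one to write $x_j$ as a nontrivial product in $k[U']$. If $x_j\in U'$, minimality of $U'$ forbids such an expression, so there is nothing to trace and no circuits to force into $e_0$. The analogy with part~(ii) simply does not run in the direction you need.

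The paper's fix is to express a \emph{different} element: take $e\in\cc$ with $j\in e$ (using the no-isolated-vertex hypothesis) and write $u=\textbf{x}_e/\textbf{x}_{e_0}=u_1\cdots u_t$ with each $u_i\in U'$. Since $j\notin e_0$, the variable $x_j$ survives in the numerator of $u$; by the description of $U'$ in Lemma~\ref{gen in general cluts}(i), the only element of $U'$ whose numerator is divisible by $x_j$ (given $x_j\in U'$) is $x_j$ itself, so we may take $u_1=x_j$. Then $u_2\cdots u_t=\textbf{x}_{e\sm\{j\}}/\textbf{x}_{e_0}$ has degree $-1$ by $d$-uniformity, while every $u_i\in U'\se U$ has degree $\ge 0$ --- the very degree observation you used in your second step. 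This is the contradiction. In other words, the ``clean degree argument exploiting $d$-uniformity'' is not reserved for the $|\pi(j)|=1$ claim; it is also the engine that drives $x_j\notin U'$.
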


\begin{proof}
We use the notations of Lemma~\ref{gen in general cluts}. First note that for each $x_j \in U_1 \sm U'$, we have $u=\f{x_j}{\textbf{x}_{\pi(j)}}\in U'\se U$. Since $\cc$ is uniform, we conclude that the degree of every element of $U$, including $u$, is non-negative. Consequently, $\deg u=0$ and $|\pi(j)|=1$, say $\pi(j)=\{v(j)\}$.

Therefore, according to Lemma~\ref{gen in general cluts}, we just need to prove that if $j>d$, then $x_j \notin U'$. On the contrary suppose that $x_j\in U'$ for some $j>d$. As $j$ is not an isolated vertex of $\cc$, there is an $e\in \cc$ with $j\in e$. Thus $u=\f{\textbf{x}_e}{\textbf{x}_{e_0}}\in U$ should be a product of monomials in $U'$, say $u=u_1\cdots
u_t$, ($u_i\in U'$). Since $j\notin e_0$, $x_j$ divides the numerator of $u$. But by Lemma~\ref{gen in general cluts}, the only monomial in $U'$  with numerator divisible by $x_j$, is $x_j$ itself. Hence we can assume that $u_1=x_j$.
If $t=1$ then $e=e_0\cup \{j\}$ which contradicts $\cc$ being a clutter. So $t>1$ and we deduce that $u'=
\f{\textbf{x}_{e\sm \{j\}}}{\textbf{x}_{e_0}}= u_2\cdots u_t$. Now all $u_i$'s have degree $\geq 0$ but $\deg u'=-1$ (because $\cc$ is uniform), a contradiction from which the result follows.
\end{proof}

\begin{thm}\label{regular chart}
Let $\cc$ be a $d$-uniform clutter without isolated vertices and $e_0=[d]\in \cc$. The $e_0$-chart of
$\Bl{I(\cc)}$ is regular \ifof $\cc$ is a $d$-partite clutter with $d$-partition $\l\{ \N(e_0\sm \{i\}) \colon \;
i\in [d] \r \}$.
\end{thm}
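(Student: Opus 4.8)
The strategy is to use Corollary~\ref{tame crit for sq frees} together with the structural description of $U'$ provided by Lemmata~\ref{gen in general cluts} and \ref{gen in uniform cluts}. For the ($\Leftarrow$) direction, I would assume $\cc$ is $d$-partite with $d$-partition $\{V_i = \N(e_0\sm\{i\})\colon i\in[d]\}$ and write down an explicit candidate $U'$: take $U' = \{x_1,\ldots,x_d\}\cup\{x_j/x_{v(j)}\colon d<j\leq n\}$, where for each $j>d$ we let $v(j)$ be the unique index $i\in[d]$ with $j\in V_i$ (well-defined since the $V_i$ partition $[n]\sm e_0$, as every vertex is non-isolated and hence lies in some circuit, which meets each $V_i$ in at most one vertex but must — being a $d$-set meeting a $d$-part partition — meet each $V_i$ exactly once). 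Then $|U'|=n$ and I must verify $k[U]=k[U']$. The containment $k[U']\subseteq k[U]$ is clear once one checks each $x_j/x_{v(j)}\in k[U]$: since $j\in\N(e_0\sm\{v(j)\})$, there is a circuit $e$ containing $(e_0\sm\{v(j)\})\cup\{j\}$, and by $d$-partiteness $e = (e_0\sm\{v(j)\})\cup\{j\}$ exactly, so $\mathbf{x}_e/\mathbf{x}_{e_0} = x_j/x_{v(j)}\in U$. Conversely, for the reverse containment I must express each $\mathbf{x}_e/\mathbf{x}_{e_0}$ ($e\neq e_0$) as a monomial in the $x_i$ ($i\in[d]$) and the $x_j/x_{v(j)}$; writing $e = \{i_1<\cdots<i_d\}$ with $i_\ell\in V_\ell$, one gets $\mathbf{x}_e/\mathbf{x}_{e_0} = \prod_{\ell\colon i_\ell>d}(x_{i_\ell}/x_\ell)\cdot\prod_{\ell\colon i_\ell\leq d}x_{i_\ell}$, and since $i_\ell\leq d$ forces $i_\ell=\ell$ (the only vertex of $e_0$ in $V_\ell$), this is a legitimate product of elements of $U'$, possibly with a leftover that cancels; so $k[U]\subseteq k[U']$ and equality holds, giving regularity by Corollary~\ref{tame crit for sq frees}.

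For the ($\Rightarrow$) direction, assume the $e_0$-chart is regular. By Lemma~\ref{gen in uniform cluts} there is a function $v\colon\{d+1,\ldots,n\}\to[d]$ with $U' = \{x_1,\ldots,x_d\}\cup\{x_j/x_{v(j)}\colon d<j\leq n\}$. I would then \emph{define} $V_i = \{i\}\cup\{j>d\colon v(j)=i\}$ for $i\in[d]$; these are clearly mutually disjoint with union $[n]$ (using that $\cc$ has no isolated vertices, so there are no stray vertices outside $e_0$ not handled — actually every $j>d$ is assigned some $v(j)$). The real content is to show (a) every circuit $e$ meets each $V_i$ in at most one vertex, and (b) $V_i = \N(e_0\sm\{i\})$. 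For (a): fix $e\in\cc$; then $u=\mathbf{x}_e/\mathbf{x}_{e_0}\in U$ must be a product of elements of $U'$, and I would analyze which $x_j/x_{v(j)}$ and which $x_i$ appear. Since $e$ is a $d$-set, the numerator of $u$ is $\mathbf{x}_{e\sm e_0}$ of degree $|e\sm e_0|$ and the denominator is $\mathbf{x}_{e_0\sm e}$ of degree $|e_0\sm e| = |e\sm e_0|$; the only elements of $U'$ with a nontrivial denominator are the $x_j/x_{v(j)}$, each contributing denominator $x_{v(j)}$ with $v(j)\leq d$, and the only ones with numerator divisible by $x_j$ ($j>d$) is $x_j/x_{v(j)}$ itself. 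So $u = \prod_{j\in e\sm e_0}(x_j/x_{v(j)})\cdot(\text{monomial in }x_1,\ldots,x_d)$ after cancellation, and matching denominators forces $\{v(j)\colon j\in e\sm e_0\} \subseteq e_0\sm e$ with these $v(j)$ distinct (no repetition, since each $x_{v(j)}$ must cancel against a distinct factor $x_{v(j)}$ in $\mathbf{x}_{e_0\sm e}$). Distinctness of the $v(j)$ together with $i\notin e$ whenever $i=v(j)$ for some $j\in e\sm e_0$ is exactly the statement that $e$ meets each $V_i$ in at most one vertex. For (b), the inclusion $\N(e_0\sm\{i\})\supseteq V_i\setminus\{i\}$: if $v(j)=i$ then $x_j/x_i\in U'\subseteq U$, so $x_j/x_i = \mathbf{x}_e/\mathbf{x}_{e_0}$ for some $e\in\cc$, forcing $e = (e_0\setminus\{i\})\cup\{j\}$, hence $j\in\N(e_0\setminus\{i\})$; and $i\in\N(e_0\setminus\{i\})$ trivially via $e_0$. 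Conversely if $j\in\N(e_0\setminus\{i\})$, there is $e\supseteq(e_0\setminus\{i\})\cup\{j\}$; by $d$-uniformity $e=(e_0\setminus\{i\})\cup\{j\}$, so $\mathbf{x}_e/\mathbf{x}_{e_0}=x_j/x_i\in U$ must factor through $U'$, and by the numerator analysis of Lemma~\ref{gen in general cluts} this forces $x_j/x_i\in U'$, i.e. $v(j)=i$, so $j\in V_i$. Thus $V_i = \N(e_0\setminus\{i\})$ and $\cc$ is $d$-partite with the claimed partition.

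\textbf{Main obstacle.} I expect the delicate point to be the bookkeeping in the ($\Rightarrow$) direction showing that the $v(j)$ for $j$ ranging over $e\setminus e_0$ are \emph{pairwise distinct} and land in $e_0\setminus e$ — i.e. converting "$u$ factors as a product of elements of $U'$" into the combinatorial at-most-one-per-part condition. The subtlety is that a priori a given $x_i$ with $i\leq d$ could both be cancelled (as a denominator $x_{v(j)}$) and reappear (as a standalone factor $x_i$) in the product; one has to argue using the fact that after reducing $u=\mathbf{x}_e/\mathbf{x}_{e_0}$ to lowest terms no $x_i$ with $i\in e_0\cap e$ survives in either numerator or denominator, so the standalone factors $x_i$ and the cancelling factors $x_{v(j)}$ occupy disjoint index sets, and then a degree/support count pins everything down. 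Once this is handled, identifying the partition with $\{\N(e_0\setminus\{i\})\}$ is a short verification using $d$-uniformity and the clutter property, as sketched.
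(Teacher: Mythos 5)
Your proposal is correct and follows essentially the same route as the paper: both directions use Corollary~\ref{tame crit for sq frees} with the explicit generating set $U'=\{x_1,\ldots,x_d\}\cup\{x_j/x_{v(j)}\}$ supplied by Lemmata~\ref{gen in general cluts} and \ref{gen in uniform cluts}, define the parts as the fibers of $v$, and identify them with the neighborhoods $\N(e_0\sm\{i\})$ via the degree-zero factorization of $\mathbf{x}_e/\mathbf{x}_{e_0}$ over $U'$. Your handling of the ``main obstacle'' (distinctness of the $v(j)$ for $j\in e\sm e_0$ and their landing in $e_0\sm e$) is in fact spelled out more carefully than the paper's one-line contradiction $x_{v(r)}x_{v(s)}=x_i^2\mid\mathbf{x}_{e_0}$, but it is the same argument.
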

\begin{proof}
(\give): Assume that $U$, $U'$ and $v(j)$'s are as in Lemma~\ref{gen in uniform cluts}. Also for $i\leq d$ set $v(i)=i$.
Let $V_i= \{j\in [n]\colon \; v(j)=i\}$ for each $i\in [d]$. Clearly $V_i$'s form a partition of $[n]$. Suppose that for
some $e\in \cc$ and $i\in [d]$, $|e\cap V_i|>1$, say $r\neq s\in e\cap V_i$. Because $e\neq e_0$ and $u=
\f{\textbf{x}_e}{\textbf{x}_{e_0}} \in U$ and since $\deg u=0$, we deduce that $u$ is a product of monomials in $U'$ with zero
degree. That is, $u=\f{x_{j_1}}{x_{v(j_1)}} \cdots \f{x_{j_d}}{x_{v(j_d)}}$ for some $j_l\in [n]$ (note that we
are using the convention $v(j)=j$ for $j\leq d$). Since $x_r,x_s|x_e$, we can assume that $j_1=r$ and $j_2=s$. So
$x_{v(r)}x_{v(s)}=x_i^2|x_{e_0}$, a contradiction.

From this contradiction, we conclude that $V_i$'s form a $d$-partition of the $d$-uniform clutter $\cc$. Whence
$\N(e\sm \{j\})$ is contained in $V_{v(j)}$ for each $j\in e\in \cc$. In particular, $\N(e_0\sm \{i\})\se V_i$ for
each $i\in [d]$. Now as $\f{x_j}{x_{v(j)}}\in U$, there should exist $e\in \cc$ with $\f{\textbf{x}_e}{\textbf{x}_{e_0}}=
\f{x_j}{x_{v(j)}}$, that is, $e=(e_0\sm \{v(j)\}) \cup \{j\}\in \cc$. Therefore, $j\in \N(e_0\sm \{v(j)\})$ and
hence the union of $\N(e_0\sm \{i \})$'s ($i\in [d]$) is the whole $[n]$. It follows that $V_i= N(e_0\sm \{i\})$,
as required.

(\rgive): Let $V_i= \N(e_0\sm \{i \})$. Thus $e= (e_0\sm \{i\}) \cup \{j\} \in \cc$ for each $j\in V_i$. So if we
set $v(j)=i$, then $\f{x_j}{x_{v(j)}}= \f{\textbf{x}_e}{\textbf{x}_{e_0}}\in U$. Let $U'=\{x_1,\ldots, x_d\}\cup \{
\f{x_j}{x_{v(j)}} \colon \;  d<j\leq n\}$. Obviously $x_i\in k[U']$ for each $i\in [n]$. If $e'\in \cc$, then $e'=\{j_1,
\ldots, j_d\}$ with $j_i\in V_i$, because $\cc$ is $d$-partite with $d$-partition $V_i$'s. Hence
$\f{\textbf{x}_{e'}}{\textbf{x}_{e_0}}= \f{x_{j_1}}{x_{v(j_1)}} \cdots \f{x_{j_d}}{x_{v(j_d)}} \in k[U']$. Consequently,
$k[U]=k[U']$ and the result follows from Lemma~\ref{tame crit for sq frees}.
\end{proof}

\begin{cor}\label{tame uniform cluts}
Assume that $\cc$ is a $d$-uniform clutter without isolated vertices. Then $\cc$ is tame \ifof $\cc$ is complete
$d$-partite.
\end{cor}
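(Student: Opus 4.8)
The plan is to combine the tameness criterion coming from Proposition~\ref{FW crit}(ii) and Lemma~\ref{vertex=gen} --- which together say that $\cc$ is tame \ifof the $e_0$-chart of $\Bl{I(\cc)}$ is regular for \emph{every} circuit $e_0\in\cc$ --- with the description of regular charts in Theorem~\ref{regular chart}. Throughout, for an arbitrary circuit $e\in\cc$ I will use that, after relabeling the vertices so that $e$ plays the role of $[d]$, Theorem~\ref{regular chart} reads: the $e$-chart is regular \ifof $\cc$ is $d$-partite with $d$-partition $\{\N(e\sm\{v\})\colon\; v\in e\}$. (Both ``$d$-partite with partition $P$'' and the operator $\N$ are defined combinatorially, so this reformulation is legitimate.)

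For the implication ``$\cc$ complete $d$-partite $\Rightarrow$ $\cc$ tame'', fix a $d$-partition $V_1,\ldots,V_d$ of $\cc$ and an arbitrary circuit $e_0=\{w_1,\ldots,w_d\}$ with $w_i\in V_i$. The key computation is that $\N(e_0\sm\{w_i\})=V_i$ for every $i$: since $\cc$ is $d$-uniform, any $v\in\N(e_0\sm\{w_i\})$ forces $(e_0\sm\{w_i\})\cup\{v\}$ to be a circuit of cardinality $d$, and by completeness this happens precisely when $v\in V_i$ (for $e_0\sm\{w_i\}$ already meets each $V_j$, $j\neq i$, in one vertex and misses $V_i$). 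Hence $\cc$ is $d$-partite with the $d$-partition $\{\N(e_0\sm\{w_i\})\}$, so Theorem~\ref{regular chart} gives that the $e_0$-chart is regular; as $e_0$ was arbitrary, $\cc$ is tame.

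For the converse, suppose $\cc$ is tame and fix a circuit $e_0=[d]$. By Theorem~\ref{regular chart} the clutter $\cc$ is $d$-partite with $d$-partition $V_i:=\N(e_0\sm\{i\})$, and $i\in V_i$ for each $i$. It remains to prove completeness, that is, that every $d$-subset $F=\{j_1,\ldots,j_d\}$ with $j_i\in V_i$ is a circuit. I plan to do this by induction on $m=|\{i\colon\; j_i\neq i\}|$, the base case $m=0$ being $F=e_0\in\cc$. For the step, pick $i_0$ with $j_{i_0}\neq i_0$ and set $F'=(F\sm\{j_{i_0}\})\cup\{i_0\}$; then $F'$ meets each $V_i$ in exactly one vertex and differs from $e_0$ in $m-1$ places, so $F'\in\cc$ by induction. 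Since $\cc$ is tame, the $F'$-chart is regular, so $\{\N(F'\sm\{v\})\colon\; v\in F'\}$ is again a $d$-partition of $\cc$. Comparing it with $\{V_i\}$: writing $\sigma(v)$ for the index with $v\in V_{\sigma(v)}$, the circuit $F'$ meets each $V_i$ exactly once so $\sigma|_{F'}$ is a bijection onto $[d]$, and $F'\sm\{v\}$ then misses exactly $V_{\sigma(v)}$, forcing $\N(F'\sm\{v\})\se V_{\sigma(v)}$; as both families partition $[n]$ into $d$ parts, a cardinality count upgrades all these inclusions to equalities $\N(F'\sm\{v\})=V_{\sigma(v)}$. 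In particular $\N(F'\sm\{i_0\})=V_{i_0}\ni j_{i_0}$, so $(F'\sm\{i_0\})\cup\{j_{i_0}\}=F$ is contained in a circuit, whence (by $d$-uniformity) $F\in\cc$, completing the induction and hence the proof of completeness.

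The routine part is the reduction through Proposition~\ref{FW crit} and Theorem~\ref{regular chart} together with the ``$\Leftarrow$'' direction. The main obstacle is the completeness half of ``$\Rightarrow$'': one must propagate the $d$-partition structure from the single circuit $e_0$ to every circuit $F'$, and the crux is obtaining the \emph{equality} $\N(F'\sm\{i_0\})=V_{i_0}$ rather than just an inclusion. This is exactly where tameness is used essentially --- it guarantees that the $F'$-chart is regular and hence that $\{\N(F'\sm\{v\})\}$ is a genuine $d$-partition of $[n]$ --- together with the elementary disjointness/cardinality argument comparing two $d$-partitions of the same set into the same number of parts.
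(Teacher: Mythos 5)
Your proof is correct and follows essentially the same route as the paper: both directions reduce to Theorem~\ref{regular chart}, and the completeness half of the forward implication is obtained by swapping one vertex at a time from a known circuit toward an arbitrary transversal $d$-set. The paper phrases that last step as an extremal contradiction (choosing $e_0\in\cc$ maximizing $|e'\cap e_0|$) rather than your induction on the number of differing coordinates, and is terser about why the neighborhoods $\N(F'\sm\{v\})$ taken relative to another circuit $F'$ recover the fixed partition $\{V_i\}$ --- a point you verify carefully via the inclusion-plus-partition-count argument.
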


\begin{proof}
(\rgive): Immediate consequence of Theorem~\ref{regular chart}.

(\give): Let $e=[d]\in \cc$. Then by Theorem~\ref{regular chart}, $\cc$ is a $d$-partite clutter with
$d$-partition $ \l\{ V_i= \N(e\sm \{i\}) |i\in [d] \r\}$. First note that the $d$-partition of $\cc$ is unique,
because clearly vertices $1, \ldots, d$ should be in different partitions and each $v\in \N(e\sm \{i\})$ should be
in the same partition as $i$ ($i\in [d]$).

Now assume that there is an $e'\se [n]\sm \cc$, $|e'|=d$ and $|e'\cap V_i|=1$ for each $i\in [d]$. Choose an
$e_0\in \cc$ such that $|e'\cap e_0|$ is the maximum possible. If $a=e'\cap e_0$, then we can assume that
$e'=a\cup \{i_1, \ldots, i_r\}$ and $e_0= a\cup \{j_1,\ldots, j_r\}$ for some $i_l \neq j_l\in V_l$. Since the
$e_0$-chart of $\Bl{I(\cc)}$ is regular it follows Theorem~\ref{regular chart} that $i_1\in V_1= \N(e_0\sm
\{j_1\})$ and so $e'_0= e_0\sm \{j_1\} \cup \{i_1\}\in \cc$. But $|e'\cap e'_0|> |e'\cap e_0|$ contradicting the
choice of $e_0$. Consequently no $e'$ with the above properties exists, that is, $\cc$ is complete $d$-partite.
\end{proof}


Now that we have a characterization of tame uniform clutters, let's pay attention to non-uniform clutters.

\begin{lem}\label{min size}
Let $\cc$ be a clutter and $e_0=[d]\in \cc$. Also assume that $d\leq |e|$ for each $e\in \cc$ and set $\cc'=\{e\in
\cc\colon\; |e|=d\}$.
\begin{enumerate}
\item If the $e_0$-chart of $\Bl{I(\cc)}$ is regular, then the $e_0$-chart of $\Bl{I(\cc')}$ is regular.
\item If $e_0$-chart of $\Bl{I(\cc')}$ is regular, $\cc'$ has no isolated vertices and $U'$ is as in
    Corollary~\ref{tame crit for sq frees}, then for each $d<j\leq n$, there is a $v(j)\leq d$ such that $U'=
    \{x_1, \ldots, x_d\}\cup \{\f{x_j}{x_{v(j)}}\colon \; d<j\leq n\}$.
\end{enumerate}
\end{lem}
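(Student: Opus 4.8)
The plan is to treat the two parts separately, using the charts description and the structural results already established. For part (i), the key observation is that passing from $\cc$ to $\cc'$ only removes generators of $I(\cc)$, and moreover it removes exactly those circuits $e$ with $|e|>d$, i.e. those for which $\frac{\textbf{x}_e}{\textbf{x}_{e_0}}$ has strictly positive degree. Concretely, with $U=U_1\cup U_2$ as in Lemma~\ref{gen in general cluts} attached to $\cc$, and $\hat U=U_1\cup\hat U_2$ the analogous set for $\cc'$, we have $\hat U\se U$ and the elements of $U\sm\hat U$ are precisely the fractions $\frac{\textbf{x}_e}{\textbf{x}_{e_0}}$ with $|e|>d$. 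First I would argue that each such fraction already lies in $k[\hat U]$: indeed, if the $e_0$-chart of $\Bl{I(\cc)}$ is regular, Corollary~\ref{tame crit for sq frees} gives $U'\se U$ with $|U'|=n$ and $k[U]=k[U']$; by Lemma~\ref{gen in general cluts}(ii) every $x_i$ with $i\in e_0$ lies in $U'$, and by Lemma~\ref{gen in general cluts}(i) the members of $U'$ are $x_i$'s together with fractions $\frac{x_j}{\textbf{x}_{\pi(j)}}$ of degree $\le 0$ — so $U'\se\hat U$. Hence $k[\hat U]\supseteq k[U']=k[U]\supseteq k[\hat U]$, forcing $k[\hat U]=k[U']$ with $|U'|=n$, and Corollary~\ref{tame crit for sq frees} applied to $\cc'$ shows the $e_0$-chart of $\Bl{I(\cc')}$ is regular. (The only subtlety is checking $U'\se\hat U$, which is where the degree bookkeeping from the non-negativity/degree-zero analysis in Lemma~\ref{gen in uniform cluts} is reused: a fraction $\frac{x_j}{\textbf{x}_{\pi(j)}}$ with $\pi(j)\se e_0$ nonempty equals $\frac{\textbf{x}_{(e_0\sm\pi(j))\cup\{j\}}}{\textbf{x}_{e_0}}$, and $(e_0\sm\pi(j))\cup\{j\}$ is a $d$-subset that must actually be a circuit of $\cc$ since $\frac{x_j}{\textbf{x}_{\pi(j)}}\in U$, hence it lies in $\cc'$.)

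For part (ii), I would simply invoke the machinery already built for the uniform case. The clutter $\cc'$ is $d$-uniform by construction, it has no isolated vertices by hypothesis, and its $e_0$-chart is regular with $e_0=[d]\in\cc'$; these are exactly the hypotheses of Lemma~\ref{gen in uniform cluts}. That lemma yields directly, for each $d<j\le n$, a $v(j)\le d$ with
\[
U'=\{x_1,\ldots,x_d\}\cup\Bigl\{\f{x_j}{x_{v(j)}}\colon\; d<j\le n\Bigr\},
\]
which is the assertion. So part (ii) is essentially a citation, once one checks that the $U'$ appearing in Corollary~\ref{tame crit for sq frees} for $\cc'$ is the same object fed into Lemma~\ref{gen in uniform cluts}; this is immediate since both are "a subset of $U$ with $n$ elements generating $k[U]$", and uniqueness of such a minimal generating set was noted in the proof of Proposition~\ref{FW crit}.

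The main obstacle, and the only place requiring genuine care rather than bookkeeping, is the claim in part (i) that $U'\se\hat U$ — equivalently, that no fraction of strictly positive degree can be forced into the minimal generating set $U'$. The argument is: elements of $U'$ are, by Lemma~\ref{gen in general cluts}, either variables or fractions $\frac{x_j}{\textbf{x}_{\pi(j)}}$ with $\emptyset\ne\pi(j)\se e_0$, hence of degree $1-|\pi(j)|\le 0$; none can have positive degree, so none of the discarded generators $\frac{\textbf{x}_e}{\textbf{x}_{e_0}}$ ($|e|>d$, positive degree) belongs to $U'$. Thus $U'\se\hat U$ and the rest follows formally. I expect the writeup of part (i) to be two short paragraphs and part (ii) to be one or two sentences.
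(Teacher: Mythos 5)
Your proposal is correct and follows essentially the same route as the paper: both parts reduce to showing $U'\se \hat U$ (the paper's $U''$) via Lemma~\ref{gen in general cluts}, using the hypothesis $|e|\ge d$ to force $|\pi(j)|=1$ (your degree count $1-|\pi(j)|=|e|-d\le 0$ is the same computation), and part (ii) is, as you say, a direct application of Lemma~\ref{gen in uniform cluts} to $\cc'$. The only cosmetic slip is asserting that $(e_0\sm\pi(j))\cup\{j\}$ is a $d$-subset before establishing $|\pi(j)|=1$, but your closing degree argument supplies exactly that step.
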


\begin{proof}
Suppose that $U$, $U'$ and $S$ are as in Lemma~\ref{tame crit for sq frees} and the notes before it. Also let
$U''=\{x_1, \ldots, x_n\} \cup \{\f{\textbf{x}_e}{\textbf{x}_{e_0}}| e_0 \neq e \in \cc'\}$. Thus the $e_0$-chart
of $\cc'$ is $\spec{k[U'']}$. We just need to show that $U'\se U''$. According to Lemma~\ref{gen in general
cluts}, the monomials in $U'$ which are not a variable are of the form $u= \f{x_j}{x_{\pi(j)}}$, for some
$\pi(j)\se [d]$. So there is an $e\in \cc$ with $u= \f{\textbf{x}_e}{\textbf{x}_{e_0}}$, which means $e=(e_0\sm
\pi(j)) \cup \{j\}$. Since $|e|\geq |e_0|$ we conclude that $|\pi(j)|=1$ and hence $|e|=|e_0|$, that is $e\in
\cc'$ and $u\in U''$. The final statement follows by applying Lemma~\ref{gen in uniform cluts} on the $e_0$-chart
of $\cc'$.
\end{proof}

Now we can present a full characterization of tame clutters.

\begin{thm}\label{main cluts}
Suppose that $\cc$ is a clutter. Then $\cc$ is tame \ifof $\cc$ is a union of some isolated vertices and a
complete $d$-partite $d$-uniform clutter, for some positive integer $d$.
\end{thm}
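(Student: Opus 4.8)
The plan is to reduce the general (possibly non-uniform) case to the uniform case handled by Corollary~\ref{tame uniform cluts} via Lemma~\ref{min size}, by showing that a tame clutter cannot contain circuits of two different sizes (ignoring isolated vertices). Throughout, I would write $\cc = \cc_0 \cup \{ \text{isolated vertices} \}$ where $\cc_0$ is the subclutter consisting of all circuits of $\cc$ of minimal positive size; since isolated vertices contribute nothing to $I(\cc)$ (the polynomial ring in the extra variables is a free factor), $\cc$ is tame iff $\cc_0$ is tame, so I may assume $\cc$ has no isolated vertices and must prove: $\cc$ tame $\iff$ $\cc$ is complete $d$-partite $d$-uniform, where $d$ is the minimal circuit size.

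The direction $(\Leftarrow)$ is immediate from Corollary~\ref{tame uniform cluts}. For $(\Rightarrow)$, let $d$ be the minimal size of a circuit, pick $e_0 \in \cc$ with $|e_0| = d$, and relabel so $e_0 = [d]$. Since $\cc$ is tame, the $e_0$-chart of $\Bl{I(\cc)}$ is regular. Let $\cc' = \{e \in \cc : |e| = d\}$, so $e_0 \in \cc'$ and $\cc'$ has no isolated vertices (every vertex of $\cc$ lies in some circuit; I would argue that in fact every vertex lies in a $d$-circuit — this is where the structure must be exploited, see below). By Lemma~\ref{min size}(i) the $e_0$-chart of $\Bl{I(\cc')}$ is regular, and then by Lemma~\ref{min size}(ii), $U' = \{x_1,\dots,x_d\} \cup \{x_j/x_{v(j)} : d<j\le n\}$ for suitable $v(j)\le d$. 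Now I run the argument of Theorem~\ref{regular chart}$(\Rightarrow)$: setting $V_i = \{j : v(j) = i\}$ (with $v(i)=i$ for $i\le d$), the $V_i$ partition $[n]$ and — using $\deg(\textbf{x}_e/\textbf{x}_{e_0}) \ge 1$ for every $e \in \cc$ together with the fact that $\textbf{x}_e/\textbf{x}_{e_0}$ must be a product of elements of $U'$ — one shows each $e \in \cc$ meets each $V_i$ in at most one vertex and, since $|e| \ge d = |\{V_i\}|$, that forces $|e| = d$. Hence $\cc = \cc' = \cc_0$ is $d$-uniform, and Corollary~\ref{tame uniform cluts} finishes the proof by giving completeness and $d$-partiteness.

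The main obstacle is the bookkeeping around why $\cc'$ has no isolated vertices and why the degree argument forces $|e| = d$ for \emph{every} $e \in \cc$ rather than only for $e \in \cc'$. For a vertex $j > d$, since $j$ is not isolated in $\cc$ there is some $e \in \cc$ with $j \in e$; then $u = \textbf{x}_e/\textbf{x}_{e_0} \in U = k$-algebra generators of the chart, so $u$ is a monomial in the $u_i \in U'$. Since $j \notin e_0$, $x_j$ divides the numerator of $u$, and by the description of $U'$ from Lemma~\ref{gen in general cluts} the only element of $U'$ whose numerator is divisible by $x_j$ is $x_j/x_{v(j)}$ itself; so one factor is $x_j/x_{v(j)}$. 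Cancelling it, $\textbf{x}_{e \sm \{j\}}/(\textbf{x}_{e_0} \textbf{x}_{v(j)}^{-1})$ — more precisely $\textbf{x}_{e\sm\{j\}} x_{v(j)} / \textbf{x}_{e_0}$ — is again a product of elements of $U'$; iterating over all vertices of $e$ outside $e_0$ peels off $|e| - |e \cap e_0|$ such factors and shows $v$ restricted to $e$ is injective, so $e$ meets each $V_i$ at most once, giving $|e| \le d$, hence $|e| = d$. In particular $j$ lies in the $d$-circuit $(e_0 \sm \{v(j)\}) \cup \{j\}$, so $\cc'$ has no isolated vertices, and the whole clutter is uniform. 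I expect the only real care needed is to handle the simplest-form/cancellation conventions cleanly (as set up in the paragraph before Lemma~\ref{gen in uniform cluts}) and to make sure the iteration terminates with the injectivity conclusion; everything else is a direct appeal to the lemmas and corollaries already proved.
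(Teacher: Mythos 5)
Your overall reduction is the same as the paper's: strip isolated vertices, let $d$ be the minimal circuit size, pass to $\cc'=\{e\in\cc\colon |e|=d\}$ via Lemma~\ref{min size}, and invoke Corollary~\ref{tame uniform cluts}. The gap is in the step where you claim that every $e\in\cc$ has $|e|=d$. Your argument is: peel off, for each $j\in e\sm e_0$, a factor $x_j/x_{v(j)}$ from the factorization of $u=\mathbf{x}_e/\mathbf{x}_{e_0}$ into elements of $U'$, conclude that $v$ is injective on $e$, hence $|e|\le d$. Two things break here. First, Lemma~\ref{gen in general cluts} only guarantees such a factor for those $j$ with $x_j\notin U'$; if $x_j\in U'$ the unique element of $U'$ with numerator divisible by $x_j$ is $x_j$ itself, no circuit $(e_0\sm\{v(j)\})\cup\{j\}$ is produced, and neither your claim that $\cc'$ has no isolated vertices nor your partition of $[n]$ into the $V_i$ follows. (In the uniform case this possibility is excluded in Lemma~\ref{gen in uniform cluts} by a degree count giving $-1<0$; for $|e|>d$ that same count gives $|e|-1-d\ge 0$ and yields nothing.) Second, and more seriously, injectivity of $v$ on $e\sm e_0$ does not follow from the factorization: all denominators $x_{v(j)}$ have index in $e_0$, and since $x_i\in U'$ for every $i\in e_0$, a repeated value $v(j)=v(j')=i$ can be absorbed by an extra variable factor $x_i$ in the product, as in $x_3x_4x_5/(x_1x_2)=(x_3/x_1)(x_4/x_1)(x_5/x_2)\,x_1$. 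The degree count only says that the number of variable factors equals $|e|-d$, so for $|e|=d+1$ exactly one such repetition (or one stray factor $x_j$ with $j\in e\sm e_0$ and $x_j\in U'$) is perfectly consistent with $k[U]=k[U']$.

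What actually kills $|e|>d$ in the paper is the clutter (antichain) axiom combined with the completeness of $\cc'$, which you never use. The paper first applies Corollary~\ref{tame uniform cluts} to $\cc'$ restricted to its non-isolated vertices to conclude it is complete $d$-partite, and then, for an arbitrary $e\in\cc$, reads off from the factorization of $\mathbf{x}_e/\mathbf{x}_{e_0}$ one vertex $j_i\in e\cap \N_{\cc'}(e_0\sm\{i\})$ for each $i\in e_0\sm e$ (these exist because each $x_i$ with $i\in e_0\sm e$ must occur among the denominators of the factors). The set $e'=\{j_i\colon i\in e_0\sm e\}\cup(e\cap e_0)$ is a $d$-subset of $e$ meeting every part of the partition exactly once, hence $e'\in\cc'\se\cc$ by completeness, and $e'\se e$ forces $e=e'$ because $\cc$ is an antichain. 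To repair your proof you need to replace the injectivity claim by this transversal-plus-completeness argument (or something equivalent); as written the forward direction does not go through.
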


\begin{proof}
(\rgive): Proved in Corollary~\ref{tame uniform cluts}. (Note that adding or removing isolated vertices does not affect
tameness.)

(\give): Let $d= \min \{ |e| \colon \; e\in \cc \}$ and $\cc'=\{e\in \cc\colon \;  |e|=d\}$. Then by
Lemma~\ref{min size}, $\cc'$ is tame. We may assume that $m \leq n$ is such that the isolated vertices of $\cc'$
are $m+1, \ldots, n$. Applying Corollary~\ref{tame uniform cluts} on $\cc'|_{[m]}$ (that is, viewing $\cc'$ as a
clutter on vertex set $[m]$), we get that $\cc'|_{[m]}$ is a complete $d$-partite $d$-uniform clutter. Let $e\in
\cc$ and $e\neq e_0=[d]\in \cc'$. Then it follows from Lemma~\ref{min size} that, the coordinate ring of the
$e_0$-chart of $\cc'$ is generated over $k$ by $U'= \{x_1, \ldots, x_d\}\cup \{\f{x_j}{x_{v(j)}}\colon \; d<j\leq
m\} \cup \{x_{m+1}, \ldots, x_n\}$ where each $v(j)\leq d$. In particular,
$$u= \f{\textbf{x}_e}{\textbf{x}_{e_0}}= \f{x_{j_1}}{x_{v(j_1)}} \cdots \f{x_{j_t}}{x_{v(j_t)}} x_{i_1}\cdots x_{i_r},$$
where $d<j_1, \ldots, j_t\leq m$. As for each $i\in [d]\sm e$ the denominator of $u$ is divisible by $x_i$, we
deduce that such an $i$ should appear as some $v(j_l)$, say $v(j_i)$. It follows that $j_i\in \N_{\cc'}(e_0\sm
\{i\})$ which is the $i$'th partition of $\cc'|_{[m]}$. Since $x_{j_l}$'s divide the numerator of $u$, we have
$e'= \{j_i \colon \; i\in e_0\sm e\} \cup (e\cap e_0)\se e$. But $e'$ meets each partition of $\cc'|_{[m]}$ in
exactly one vertex and so $e'\in \cc'\se \cc$ which is in contradiction with $\cc$ being a clutter, unless $e=e'$.
Therefore, $|e|=d$ and as $e$ was arbitrary we conclude that $\cc=\cc'$ is a union of some isolated vertices and a
complete $d$-partite $d$-uniform clutter.
\end{proof}

Let $F\se [n]$. By $P_F$ we mean the prime ideal of $R$ generated by $x_i$'s with $i\in F$. Using this notation,
the algebraic restatement of the above theorem is:
\begin{cor} \label{tamness and simplicial complexes}
Suppose that $I$ is a proper squarefree monomial ideal of $R$. Then the following are equivalent:
\begin{enumerate}
\item the blowup of $\bbA_k^n$ along $I$ is regular;
\item there exist mutually disjoint nonempty subsets $F_1, \ldots, F_d$ of $[n]$ such that
    $I=P_{F_1}P_{F_2}\cdots P_{F_d}=P_{F_1}\cap P_{F_2}\cap \cdots \cap P_{F_d}$.
\end{enumerate}
\end{cor}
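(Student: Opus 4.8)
The plan is to derive Corollary~\ref{tamness and simplicial complexes} from Theorem~\ref{main cluts} by translating the combinatorial statement into ideal-theoretic language via the circuit ideal correspondence. First I would let $\cc$ be the clutter with $I = I(\cc)$; this is well-defined since $I$ is a proper squarefree monomial ideal, and by Lemma~\ref{vertex=gen} the circuits of $\cc$ are exactly the (exponent sets of the) minimal generators of $I$. The blowup of $\bbA_k^n$ along $I$ is regular precisely when $\cc$ is tame (by definition), so by Theorem~\ref{main cluts} statement (i) holds \ifof $\cc$ is a union of some isolated vertices together with a complete $d$-partite $d$-uniform clutter for some positive integer $d$. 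It then remains to check that this combinatorial description is equivalent to the existence of mutually disjoint nonempty $F_1, \ldots, F_d \se [n]$ with $I = P_{F_1}\cdots P_{F_d} = P_{F_1}\cap \cdots \cap P_{F_d}$.

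For the translation, suppose $\cc$ has $d$-partition $\{V_1, \ldots, V_d\}$ on its non-isolated vertices and contains all $d$-subsets meeting each $V_i$ in exactly one vertex. Set $F_i = V_i$. Then the circuits of $\cc$ are exactly the sets $\{j_1, \ldots, j_d\}$ with $j_i \in F_i$, so
\[
I(\cc) = (x_{j_1}\cdots x_{j_d} \colon\; j_i \in F_i \text{ for } i\in[d]) = P_{F_1}P_{F_2}\cdots P_{F_d},
\]
where the last equality is just the expansion of a product of monomial prime ideals generated by disjoint sets of variables. For the intersection, one uses the standard fact that for monomial ideals generated by variables in pairwise disjoint index sets, the product equals the intersection; concretely a monomial lies in $\bigcap P_{F_i}$ \ifof it is divisible by some $x_{j_i}$ with $j_i \in F_i$ for every $i$, \ifof it is divisible by $x_{j_1}\cdots x_{j_d}$, \ifof it lies in the product. (The disjointness of the $F_i$ is exactly what makes $x_{j_1}\cdots x_{j_d}$ squarefree of degree $d$ and prevents collapsing.) Conversely, if $I = P_{F_1}\cdots P_{F_d}$ with the $F_i$ mutually disjoint and nonempty, then reading off the minimal generators shows $\cc = \cc(I)$ consists precisely of the transversals $\{j_1,\ldots,j_d\}$, $j_i\in F_i$, which is a complete $d$-partite $d$-uniform clutter on $\bigcup F_i$ together with the isolated vertices $[n]\sm\bigcup F_i$; and the argument above gives the intersection formula automatically, so condition (ii) is recovered in full.

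The step I expect to require the most care is not a deep obstacle but a bookkeeping point: ensuring the equivalence handles the boundary and degenerate cases cleanly. One must confirm that $d$ can legitimately be taken to be the common circuit size (so $I$ proper forces $d \ge 1$, and the $F_i$ are genuinely nonempty because a $d$-partition of a nonempty uniform clutter has every part nonempty), that isolated vertices of $\cc$ correspond exactly to variables $x_\ell$ with $\ell \notin \bigcup F_i$ and play no role in either the product or the intersection, and that the product-equals-intersection identity for $P_{F_1}\cdots P_{F_d}$ genuinely needs the pairwise disjointness (otherwise only the product description, not the intersection, would survive, and the squarefreeness of the generators would fail). With these points verified, the corollary follows immediately from Theorem~\ref{main cluts}, so the proof is essentially a one-paragraph dictionary argument once the reductions are in place.
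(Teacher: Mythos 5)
Your proof is correct and follows exactly the route the paper intends: the paper states this corollary without proof as ``the algebraic restatement'' of Theorem~\ref{main cluts}, and your dictionary argument (circuits $\leftrightarrow$ minimal generators, partition classes $V_i \leftrightarrow$ the sets $F_i$, and product $=$ intersection for primes on disjoint variable sets) is precisely the translation being left implicit. The only quibble is that the identification of circuits with minimal generators is just the definition of the circuit ideal correspondence rather than a consequence of Lemma~\ref{vertex=gen}, which concerns vertices of the Newton polytope.
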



\subsection*{Tameness via Stanley-Reisner complexes}
Let $R=k[x_1, \ldots, x_n]$ be the polynomial ring over a field $k$ and $\mathfrak{m} = \left( x_1, \ldots, x_n
\right)$ be its irredundant (homogeneous) maximal ideal. Squarefree monomial ideals $I \subset \mathfrak{m}^2$ are
in one-to-one correspondence to simplicial complexes on $[n]=\{1, \ldots, n\}$, via Stanley-Reisner ideals. The
interaction between combinatorial behaviour of simplicial complexes and algebraic (geometric) properties of
corresponding ideals (varieties) is wide area of research in combinatorial commutative algebra (algebraic
geometry). In the first part of this section, it was observed that, for a squarefree monomial ideal $I$, the
property of being tame is eventually a combinatorial property. In the following, we state an equivalent condition
on a simplicial complex, such that the corresponding Stanley-Reisner ideal is tame.

\begin{defn}[Simplicial complex]
A \textit{simplicial complex} $\Delta$  over a set of vertices $V=\{ v_{1}, \ldots, v_{n} \}$, is a collection of subsets of $V$, with the property that:
\begin{itemize}
\item[(a)] $\{ v_{i} \} \in \Delta $, for all $i$;
\item[(b)] if $F\in \Delta$, then all subsets of $F$ are also in $\Delta$ (including the empty set).
\end{itemize}
An element of $\Delta$ is called a \textit{face }of $\Delta$ and a \emph{non-face} of $\Delta$ is a subset $F$ of $V$ with $F \notin \Delta$. The maximal faces of $\Delta$ under inclusion are called \textit{facets }of $\Delta$. Let $\mathcal{F}(\Delta) =\{F_{1}, \ldots, F_{q}\}$ be the facet set of $\Delta$. It is clear that $\mathcal{F}(\Delta)$ determines $\Delta$ completely and we write $\Delta = \langle F_{1}, \ldots, F_{q} \rangle$.
\end{defn}

Let $\Delta$ be a simplicial complex over $n$ vertices labelled $v_{1}, \ldots, v_{n}$. The \emph{non-face ideal}
or the \emph{Stanley-Reisner ideal} of $\Delta$, denoted by $I_\Delta$, is the ideal of $R$ generated by
squarefree monomials $\{ \textbf{x}_F \colon \; F \in \mathcal{N}(\Delta) \}$. One may notice that there exists
a one-to-one correspondence between squarefree monomial ideals in $\mathfrak{m}^2$ and Stanley-Reisner ideals of
simplicial complexes. It is well-known that $I_\Delta=\bigcap_{F \in \mathcal{F}(\Delta)}P_{\bar{F}}$ is a prime
decomposition of $I_\Delta$, where $P_{\bar{F}}$ denotes the (prime) ideal generated by all $\{x_i \colon v_i
\notin F \}$ (see e.g \cite{Stan1}, for more details). By virtue of Corollary~\ref{tamness and simplicial
complexes}, we may characterize tame squarefree monomial ideals in terms of combinatorics of the associated
simplicial complexes.

\begin{prop} \label{nice criterion for tameness}
Let $I$ be a squarefree monomial ideal and $\Delta$ be a  simplicial complex on vertex set $[n]$ such that
$I_\Delta =I$. The following are equivalent.
\begin{enumerate}
\item $I$ is a tame ideal.
\item For any two distinct facets $F, G \in \mathcal{F}(\Delta)$, we have $F \cup G =[n]$.
\end{enumerate}
\end{prop}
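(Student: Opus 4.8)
The plan is to deduce this from Corollary~\ref{tamness and simplicial complexes} by rephrasing the combinatorial condition (ii) in terms of the complements of facets and matching it against the prime decomposition of $I_\Delta$. Recall the standard fact quoted above: if $I=I_\Delta$ then $I=\bigcap_{F\in\mathcal{F}(\Delta)}P_{\overline F}$, where $\overline F=[n]\sm F$. Since the facets of $\Delta$ are pairwise incomparable, so are the sets $\overline F$, and hence the monomial primes $P_{\overline F}$ ($F\in\mathcal{F}(\Delta)$) are pairwise distinct with no containments among them; thus this is an \emph{irredundant} prime decomposition and $\Min I=\{P_{\overline F}\colon F\in\mathcal{F}(\Delta)\}$. (We may assume $I\neq 0$, since otherwise $\Delta$ is the full simplex and both (i) and (ii) hold trivially; and when $I\neq 0$ every facet is a proper subset of $[n]$, so every $\overline F$ is nonempty.) The elementary identity $\overline F\cap\overline G=[n]\sm(F\cup G)$ shows that condition (ii) is equivalent to saying that the family $\{\overline F\colon F\in\mathcal{F}(\Delta)\}$ consists of pairwise disjoint nonempty subsets of $[n]$.

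For (ii)$\Rightarrow$(i), assuming (ii), the sets $\overline F$ ($F\in\mathcal{F}(\Delta)$) are pairwise disjoint and nonempty. For a family of squarefree monomial primes with pairwise disjoint supports, the product equals the intersection (an easy computation on generators), so $I=\bigcap_{F\in\mathcal{F}(\Delta)}P_{\overline F}=\prod_{F\in\mathcal{F}(\Delta)}P_{\overline F}$; by Corollary~\ref{tamness and simplicial complexes} this means $I$ is tame, i.e. the blowup of $\mathbb{A}_k^n$ along $I$ is regular. For (i)$\Rightarrow$(ii), if $I$ is tame then Corollary~\ref{tamness and simplicial complexes} provides pairwise disjoint nonempty subsets $F_1,\dots,F_d\se[n]$ with $I=P_{F_1}\cap\cdots\cap P_{F_d}$. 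Being pairwise disjoint and nonempty, the $F_i$ are pairwise incomparable, so this too is an irredundant prime decomposition of $I$; uniqueness of $\Min I$ then forces $\{P_{F_1},\dots,P_{F_d}\}=\{P_{\overline F}\colon F\in\mathcal{F}(\Delta)\}$, equivalently $\{F_1,\dots,F_d\}=\{\overline F\colon F\in\mathcal{F}(\Delta)\}$. Since the $F_i$ are pairwise disjoint, so are the $\overline F$, which by the previous paragraph is exactly condition (ii).

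There is no serious obstacle here once Corollary~\ref{tamness and simplicial complexes} is in hand; the whole argument is bookkeeping with monomial primes. The only two points that genuinely need a sentence of care are: that the facet decomposition $\bigcap_{F}P_{\overline F}$ is irredundant (so it coincides, as a set of primes, with the decomposition $P_{F_1}\cap\cdots\cap P_{F_d}$ coming from tameness), and the product-versus-intersection identification $\prod_F P_{\overline F}=\bigcap_F P_{\overline F}$ needed to feed the "complements pairwise disjoint" condition back into the corollary. Both follow immediately from the fact that for $A\se[n]$ the prime $P_A$ is generated by $\{x_i\colon i\in A\}$ and that $P_A\se P_B$ iff $A\se B$.
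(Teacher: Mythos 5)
Your proof is correct and follows essentially the same route as the paper's: both rest on the decomposition $I_\Delta=\bigcap_{F\in\mathcal{F}(\Delta)}P_{\overline{F}}$, the uniqueness of the minimal prime decomposition of a squarefree monomial ideal, and Corollary~\ref{tamness and simplicial complexes}. The only difference is that you spell out the irredundancy of the facet decomposition and the product-equals-intersection identity for primes with disjoint supports, points the paper leaves implicit.
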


\begin{proof}
Let $\mathcal{F}(\Delta) = \{ G_1, \ldots, G_s \}$. Then
\begin{equation}\label{Local EQ 3}
I= I_\Delta = P_{\bar{G}_1} \cap \cdots \cap P_{\bar{G}_s}
\end{equation}
is the minimal prime decomposition of $I$.

(i) \give\ (ii): Since $I$ is a tame squarefree monomial ideal, it follows from Corollary~\ref{tamness and
simplicial complexes} that:
$$I = P_{F_1} \cap \cdots \cap P_{F_r},$$
where $F_1, \ldots, F_r$ are mutually disjoint subsets of $[n]$. Since the minimal prime decomposition of a
squarefree monomial ideal is unique (up to a permutation of prime components), we conclude that, for all $i \in
[r]$ there exits a (unique) $j \in [s]$, such that $F_i = \bar{G}_j$. The assertion now follows from the fact that
$F_1, \ldots, F_r$ are mutually disjoint.

(ii) \give (i): Our assumption in (ii) implies that, $\bar{G}_i$'s are mutually disjoint subset of $[n]$. Thus the
desired conclusion follows from (\ref{Local EQ 3}) and  Corollary~\ref{tamness and simplicial complexes}.
\end{proof}

\begin{rem}
The statement in Proposition~\ref{nice criterion for tameness}, provides a simple algorithm for detecting tameness of squarefree monomial ideals. To be more precise, for a given a squarefree monomial ideal $I= \left( \textbf{x}_{F_1}, \ldots, \textbf{x}_{F_r} \right)$, let
$$ P_{F_1} \cap \cdots \cap P_{F_r}= \left( \textbf{x}_{[n] \setminus T_1}, \ldots, \textbf{x}_{[n] \setminus T_s} \right),$$
and $\Delta = \langle T_1, \ldots, T_s \rangle$. Then, $I_\Delta =I$ and the equivalent condition on $I$ to be tame is to show that $T_i \cup T_j =[n]$, for all $i \neq j$.
\end{rem}


\subsection*{Polarization and tameness}
Polarization is a technique which corresponds to an arbitrary monomial ideal, a squarefree monomial ideal in a new set of variables. The construction is as follows:

Let $I$ be a monomial ideal in the polynomial ring $R=k[x_1, \ldots, x_n]$ with the (unique) minimal set of generators $\mathcal{G}(I) = \left\{ u_1, \ldots, u_r \right\}$, where $u_i = \prod_{j=1}^{n} x_j^{\alpha_{i,j}}$. Let $\alpha_i$ be the maximum exponent of the variable $x_i$ appearing in elements of $\mathcal{G}(I)$. Without loss of generality, we may assume that $\alpha_i$ are all positive. Let $S = R \left[Y_{i,j} \colon \; i = 1, \ldots , n \text{ and }  j = 2, \ldots , \alpha_i \right]$. For each monomial $u_i$, we define
\begin{equation*}
u_i^\mathscr{P}  = x_1^{\min \{ \alpha_{i,1},1 \} } Y_{1,2} \cdots Y_{1, \alpha_{i,j}}  \cdots  \ x_n^{\min \{ \alpha_{i,n}, 1 \}} Y_{n,2} \cdots Y_{n, \alpha_{i,j}}
\end{equation*}
By the choice of $\alpha_i$, $u_i^\mathscr{P} \in S$. The polarization $I^\mathscr{P}$ of $I$ is the ideal in $S$ generated by $\left\{u_1^\mathscr{P}, \ldots, u_r^\mathscr{P} \right\}$.

A monomial ideal $I$ and its polarization $I^\mathscr{P}$ share many homological and
algebraic properties. Thus, by polarization, many questions concerning monomial ideals can be reduced to squarefree monomial ideals. For example the graded Betti numbers of $I$ and $I^\mathscr{P}$ are the same (c.f \cite[Corollary 1.6.3]{HHBook}). In the following we show that if $I^\mathscr{P}$ is a tame ideal, then so is $I$, but the converse is not true as the following example shows:

\begin{ex}
Assume that $I = \left( x_1^2, x_1x_2, x_2^2 \right) \subset k[x_1,x_2]$. Then, $I^\mathscr{P} = ( x_1y_{1,2},
x_1x_2, x_2y_{2,2}) \se k[x_1, x_2, y_{1,2}, y_{2,2}]$ is not tame by Theorem~\ref{main cluts} (or
Corollary~\ref{simple graphs}). However, it follows from Theorem~\ref{main looped graph} that, the ideal $I$ is
tame.
\end{ex}

\begin{prop}
Let $I \subset R$ be a monomial ideal and $I^\mathscr{P} \subset S$ be the polarization of $I$. The following are
equivalent:
\begin{enumerate}
\item $I^\mathscr{P}$ is a tame squarefree monomial deal in $S$;
\item there exist a monomial $u \in R$ and a tame squarefree monomial ideal $I' \subset R$ such that $I= u I'$.
\end{enumerate}
In particular, if $I^\mathscr{P}$ is tame, then so is $I$.
\end{prop}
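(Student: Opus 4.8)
The plan is to prove (ii) $\Rightarrow$ (i) directly and (i) $\Rightarrow$ (ii) by carefully decoding what the polarization of a tame squarefree ideal looks like. For the easy direction, suppose $I = uI'$ with $u = \prod x_j^{c_j}$ a monomial and $I'$ a tame squarefree monomial ideal. The key observation is that polarization interacts well with multiplication by a monomial: if $\mathcal{G}(I') = \{v_1, \ldots, v_r\}$ then $\mathcal{G}(I) = \{uv_1, \ldots, uv_r\}$, and one checks from the definition of $u_i^{\mathscr{P}}$ that $(uv_i)^{\mathscr{P}}$ is obtained from $v_i$ (which is already squarefree) by multiplying through by a \emph{fixed} squarefree monomial $w$ in the old and new variables, namely $w = \prod_{j} x_j^{\min\{c_j,1\}} Y_{j,2}\cdots Y_{j,\alpha_j}$ where $\alpha_j$ is the relevant maximal exponent. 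Hence $I^{\mathscr{P}} = w\cdot I'$ as an ideal of $S$ (with $I'$ extended to $S$), and since $I'$ is tame, Corollary~\ref{tamness and simplicial complexes} gives $I' = P_{F_1}\cap\cdots\cap P_{F_d}$ for mutually disjoint $F_i$; multiplying by the squarefree monomial $w$ adds the support of $w$ as one further disjoint block (after noting the $Y$-variables and the $x_j$ with $c_j>0$ that occur in $w$ are disjoint from all the $F_i$, because those $x_j$ already appear to a positive power in $u$ hence were factored out of $I'$). So $I^{\mathscr{P}}$ is again a product of pairwise disjoint coordinate primes, hence tame by Corollary~\ref{tamness and simplicial complexes}. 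The ``in particular'' clause is then immediate: take $u = 1$... no---rather, from (i) we obtain (ii), and (ii) exhibits $I = uI'$ with $I'$ tame squarefree, which in particular says $I$ is tame by the same Corollary applied after clearing the monomial factor $u$ (tameness is unaffected by multiplying a squarefree ideal by a monomial, a fact one records as a short lemma: the convex-geometric criterion of Proposition~\ref{FW crit}, or directly the Rees algebra, is insensitive to such a shift since it amounts to a monomial change of the generators).

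For the harder direction (i) $\Rightarrow$ (ii), assume $I^{\mathscr{P}}$ is tame. By Theorem~\ref{main cluts}, $I^{\mathscr{P}}$ (viewed as the circuit ideal of its associated clutter on the variable set $\{x_i\}\cup\{Y_{i,j}\}$) is a union of isolated vertices and a complete $d$-partite $d$-uniform clutter $\mathcal{D}$; in particular every minimal generator $u_i^{\mathscr{P}}$ has the \emph{same} degree $d$, and the supports of the $u_i^{\mathscr{P}}$ are the circuits of $\mathcal{D}$, which all meet each part $V_\ell$ in exactly one vertex. First I would translate ``all $u_i^{\mathscr{P}}$ have equal degree'' back to $I$: the degree of $u_i^{\mathscr{P}}$ equals $\sum_j \max\{\alpha_{i,j}, ?\}$---more precisely it equals $\#\{j : \alpha_{i,j}\geq 1\} + \sum_j (\alpha_{i,j}-1)^+$, which is exactly $\sum_j \alpha_{i,j}$ adjusted... let me just say it equals the number of variables dividing $u_i$ counted with the convention that $x_j$ contributes $\alpha_{i,j}$ if $\alpha_{i,j}\ge 1$. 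One shows this sum is $\deg u_i$ when all exponents are $\ge 1$ and in general equals $\deg u_i + \#\{j: \alpha_{i,j}=0\}$... the cleanest route: observe $\deg u_i^{\mathscr{P}}$ is constant $=d$, and that for a fixed variable index $j$, the part of $\mathcal{D}$ supported on $\{x_j, Y_{j,2},\ldots,Y_{j,\alpha_j}\}$ is an ``interval'' because these come from the single variable $x_j$; the $d$-partite structure then forces that for each $j$ either \emph{every} generator is divisible by $x_j$ (to the same power, giving a common factor) or the behavior across generators in the $x_j$-block is rigidly controlled.

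The main obstacle, and the technical heart, is showing that the complete $d$-partite structure on $I^{\mathscr{P}}$ forces, for each original variable $x_j$, that the exponent $\alpha_{i,j}$ is either \emph{independent of} $i$ (this $x_j^{\alpha_{i,j}}$ then goes into the monomial factor $u$) or takes only the values $0$ and $1$ with the ``$1$'' slots forming a single coordinate prime $P_{F}$ (after the common factor is removed, this $x_j$ sits in one block $F_\ell$ of the disjoint decomposition of $I'$). The way I would push this through: the $Y_{j,2},\ldots,Y_{j,\alpha_j}$ are variables that appear in $u_k^{\mathscr{P}}$ iff $\alpha_{k,j}\ge 2$, and they appear \emph{together} (all of $Y_{j,2},\ldots,Y_{j,\alpha_{k,j}}$) whenever any of them appears. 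In a $d$-partite $d$-uniform clutter each circuit hits each part once, so two variables that always appear together or never appear must lie in the same part and in fact be ``parallel''; pushing this, $Y_{j,2}$ appears in every circuit (else it would be an isolated vertex or force a non-uniform circuit), which forces $\alpha_{i,j}\ge 2$ for all $i$, hence $x_j^{\alpha_j - 1}$... iterating, $\alpha_{i,j}$ is constant equal to $\alpha_j$ for those $j$ with any $Y_{j,\bullet}$ present, giving the monomial factor $u = \prod_{j:\alpha_j\ge 2} x_j^{\alpha_j}$ (times possibly some $x_j$ with $\alpha_j=1$ dividing all generators). Dividing $I$ by $u$ leaves a squarefree ideal $I'$ with $\mathcal{G}(I') = \{u_i/u\}$, and $(I')^{\mathscr{P}} = I'$ matches $I^{\mathscr{P}}/w$ up to renaming, so $I'$ inherits tameness from $I^{\mathscr{P}}$ via the reverse of the easy direction; hence $I = uI'$ with $I'$ tame squarefree, which is (ii). I expect the bookkeeping around ``a variable $x_j$ with $\alpha_j = 1$ that divides all generators'' (it can be thrown into either $u$ or $I'$) to generate the only real case distinctions, and the uniqueness of the minimal prime decomposition of a squarefree ideal (used already in Proposition~\ref{nice criterion for tameness}) will be invoked to pin down that the disjoint blocks are forced.
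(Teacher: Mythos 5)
Your strategy is the same as the paper's---reduce both directions to the structure theorem for tame squarefree ideals---but the technical heart of (i)$\Rightarrow$(ii) fails as stated. You claim that for each original variable $x_j$ the exponent $\alpha_{i,j}$ is either independent of $i$ or takes only the values $0$ and $1$, and that whenever some $Y_{j,\bullet}$ is present, $Y_{j,2}$ lies in every circuit, forcing $\alpha_{i,j}\equiv\alpha_j$ and yielding the factor $u=\prod_{\alpha_j\ge 2}x_j^{\alpha_j}$. This is refuted by $I=(x_1^2,x_1x_2)\subset k[x_1,x_2]$: its polarization $(x_1Y_{1,2},\,x_1x_2)$ is the complete bipartite clutter with parts $\{x_1\}$ and $\{x_2,Y_{1,2}\}$, hence tame, yet $Y_{1,2}$ does not lie in every circuit, the exponents of $x_1$ are $2$ and $1$ (neither constant nor confined to $\{0,1\}$), and your $u=x_1^2$ does not divide $x_1x_2$. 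What tameness actually forces is only $\alpha_{i,j}\ge\alpha_j-1$ for all $i$, so the correct common factor is $u=\prod_j x_j^{\alpha_j-1}$; and the proof of this inequality is not ``else $Y_{j,2}$ would be isolated or a circuit non-uniform,'' but an exchange argument using \emph{completeness}: if some generator had $\alpha_{i,j}\le\alpha_j-2$, one could swap $Y_{j,\alpha_j-1}$ (or $x_j$, when $\alpha_j=2$) out of a circuit containing $Y_{j,\alpha_j}$ and obtain a circuit containing $Y_{j,\alpha_j}$ but not $Y_{j,\alpha_j-1}$ (resp.\ not $x_j$), which is impossible because the $Y_{j,\bullet}$'s occurring in any generator form an initial interval. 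Without this step the decomposition $I=uI'$ is not obtained. (The paper's own proof also overstates matters here, asserting that every $\{Y_{i,\ell}\}$ is a singleton part of the $d$-partition; that fails for $\ell=\alpha_i$ in the same example, though the weaker claim for $\ell\le\alpha_i-1$ suffices.)

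The same example breaks a step of your (ii)$\Rightarrow$(i): you assert that the support of $w$ is disjoint from the blocks $F_i$ of $I'$ ``because those $x_j$ were factored out of $I'$,'' but $I=(x_1^2,x_1x_2)=x_1\cdot(x_1,x_2)$ shows that the support of $u$ can meet the support of $I'$, and then $I^{\mathscr{P}}$ is not $w\cdot I'$ for any monomial $w$. The repair is a renaming: $(uv)^{\mathscr{P}}=w\cdot\sigma(v)$ with $w=\prod_{c_j\ge1}x_jY_{j,2}\cdots Y_{j,c_j}$ and $\sigma$ sending $x_j$ to $Y_{j,c_j+1}$ whenever $x_j$ divides both $u$ and some generator of $I'$; after this, your disjoint-coordinate-primes argument via Corollary~\ref{tamness and simplicial complexes} does go through and is a reasonable alternative to the paper's chart-coincidence argument. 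Your reduction of the ``in particular'' clause to the invariance of tameness under multiplication by a monomial is fine.
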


\begin{proof}
(i) \give\ (ii): Let $\cc$ be the clutter associated to $I^\mathscr{P}$. Since $I^\mathscr{P}$ is a tame
squarefree monomial ideal, it follows from Theorem~\ref{main cluts} that, $\cc$ is a uniform clutter, say
$d$-uniform, and hence $I$ is a monomial ideal generated by monomials of the same degree $d$.

Using the same notation as above, let $\alpha_i$ be the maximum exponent of the variable $x_i$ appearing in
elements of $\mathcal{G}(I)$. Again, without loss of generality, we may assume that $\alpha_i$ are all positive.
Since $I^\mathscr{P}$ is tame, we conclude from Theorem~\ref{main cluts} that, $\cc$ is complete $d$-partite and
$\{Y_{i,2}\}, \ldots, \{Y_{i,\alpha_i}\}$ are some of the partitions of $\cc$, for all $i$. So $Y_{i,2} \cdots
Y_{i,\alpha_i}$ divides all generators of $I^\mathscr{P}$, for all $i$. Equivalently, $x_i^{\alpha_i-1}$ divides
all generators of $I$, for all $i$. Let $u= x_1^{\alpha_1-1} \cdots x_n^{\alpha_n-1}$ and $I' = \left( f/u \colon
\; f \in \mathcal{G}(I) \right)$. Then by the above discussion, $I'$ is a squarefree monomial ideal in $R$ and
$I=u I'$. Let $\cc'$ be the clutter associated to $I'$. Then,
$$\cc'= \big\{ F \setminus \left\{ Y_{1,2}, \ldots, Y_{1,\alpha_1}, \ldots, Y_{n,2}, \ldots, Y_{n,\alpha_n}\right\} \colon \quad F \in \cc \big\}.$$
Since $\cc$ is tame, we may apply Theorem~\ref{main cluts}, to observe that $\cc'$ is again tame. 

(ii) \give\ (i): In this case, $I^\mathscr{P} =u^\mathscr{P}I'$. Hence, $\mathcal{G} \left(I^\mathscr{P} \right) =
\left\{ u^\mathscr{P}f \colon \; f \in \mathcal{G}(I') \right\}$. So that, for every $f\in \mathcal{G}(I)$, the
$f$-chart of $I'$ coincide with $u^\mathscr{P}f$-chart of $I^\mathscr{P}$. Since $I'$ is a tame ideal, the
$f$-chart of $I'$ is regular, for every $f \in \mathcal{G}(I')$. Hence, $f'$-chart of $I^\mathscr{P}$ is also
regular, for all $f' \in \mathcal{G} \left( I^\mathscr{P} \right)$. This means that $I^\mathscr{P}$ is a tame
ideal.

To obtain the last statement, we use the same argument as in the proof of (ii) \give (i).
\end{proof}


\section{Tame Monomial Ideals Generated in Degree at Most Two}
Squarefree monomial ideals are not the only class of monomial ideals whose vertices can be classified exactly.
Another such class is the class of monomial ideals generated in degree at most 2. In this section we investigate
tameness of these ideals.

Our first simple observation is that if $x_1^d$ is in $\mathcal{G}(I)$ for a monomial ideal $I$ and $\textbf{a}
=\left( a_1, \ldots, a_n \right)\in \supp I$ (or even in $N(I)$) with $a_i=0$ for all $1\neq i$, then $a_1\geq d$.
Now if $\textbf{p}= (d,0,0,\ldots, 0)$ is a convex combination of different points in $N(I)$, then clearly all
these points should have zero on all entries except the first one. So indeed all these points should be
$\textbf{p}$ itself, that is, $\textbf{p}$ is a vertex of $N(I)$. A similar argument, which we leave the details
to the reader, proves the following lemma.

\begin{lem}\label{vertices in deg 2}
If $x_i^\alpha\in \mathcal{G}(I)$ for some $\alpha>0$, then $x_i^\alpha$ is a vertex of $I$. Also if degree of all elements
of $\mathcal{G}(I)$ is at most two, then $u\in \mathcal{G}(I)$ is not a vertex of $I$, \ifof $u=x_ix_j$ for some $i\neq j\in [n]$ such
that $x_i^2, x_j^2\in \mathcal{G}(I)$.
\end{lem}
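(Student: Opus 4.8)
The plan is to handle the two assertions of Lemma~\ref{vertices in deg 2} separately, in both cases reducing everything to the definition of a vertex of $N(I)$ and exploiting that the generators have degree at most two. For the first assertion, I would argue exactly as in the paragraph preceding the lemma: suppose $x_i^\alpha \in \mathcal{G}(I)$ and write $\alpha \textbf{e}_i$ as a convex combination $\sum_{j} \lambda_j \textbf{p}_j$ of distinct points of $N(I)$ with $0<\lambda_j<1$. Since each coordinate of each $\textbf{p}_j$ is nonnegative and the combination has all coordinates except the $i$-th equal to zero, every $\textbf{p}_j$ must be supported only on the $i$-th coordinate; and since $x_i^\alpha \in \mathcal{G}(I)$, any point of $\supp I$ supported only on the $i$-th coordinate has $i$-th coordinate $\geq \alpha$. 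Hence each $\textbf{p}_j = \alpha \textbf{e}_i$, contradicting distinctness, so $x_i^\alpha$ is a vertex.

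For the second assertion, first I would observe that if $u = x_i x_j$ with $i\neq j$ and $x_i^2, x_j^2 \in \mathcal{G}(I)$, then $\textbf{a}_u = \textbf{e}_i + \textbf{e}_j = \tfrac12(2\textbf{e}_i) + \tfrac12(2\textbf{e}_j)$ is a nontrivial convex combination of two distinct points of $\supp I \subseteq N(I)$, so $u$ is not a vertex. For the converse, suppose $u \in \mathcal{G}(I)$ is not a vertex of $N(I)$. Since every generator has degree $\leq 2$ and $u\in \mathcal{G}(I)$, $u$ is either a variable $x_i$ or a squarefree product $x_i x_j$ or a square $x_i^2$. If $u = x_i$ then $\textbf{a}_u = \textbf{e}_i$ is a standard basis vector and cannot be a nontrivial convex combination of other nonnegative lattice-supported points of $N(I)$ (any such combination forces all those points to equal $\textbf{e}_i$), so $u$ would in fact be a vertex, contradiction; if $u = x_i^2$ then by the first assertion $u$ is a vertex, again a contradiction. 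Hence $u = x_i x_j$ for some $i \neq j$. It remains to show $x_i^2, x_j^2 \in \mathcal{G}(I)$.

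To finish, I would write $\textbf{a}_u = \textbf{e}_i+\textbf{e}_j$ as $\sum_{l=1}^t \lambda_l \textbf{p}_l$ with $t\geq 2$, the $\textbf{p}_l$ distinct points of $N(I)$, $0<\lambda_l<1$, $\sum \lambda_l = 1$; as in the proof of Lemma~\ref{vertex=gen} I may further replace each $\textbf{p}_l$ by a convex combination of elements of $\supp I$ and so assume $\textbf{p}_l = \textbf{a}_{k_l} + \textbf{b}_l$ with $\textbf{x}^{\textbf{a}_{k_l}} \in \mathcal{G}(I)$ and $\textbf{b}_l \in \n^n$. Every $\textbf{p}_l$ has nonnegative coordinates summing (coordinatewise over $l$, weighted) to $\textbf{e}_i+\textbf{e}_j$, so each $\textbf{p}_l$ is supported inside $\{i,j\}$ with total coordinate sum at most $2$; since $\textbf{x}^{\textbf{a}_{k_l}} \in \mathcal{G}(I)$ is a nonzero generator, $\textbf{p}_l \in \{\textbf{e}_i,\textbf{e}_j, 2\textbf{e}_i, 2\textbf{e}_j, \textbf{e}_i+\textbf{e}_j\}$, and $\textbf{p}_l \neq \textbf{e}_i + \textbf{e}_j$ is impossible for all $l$ unless $t=1$ (if $\textbf{p}_l = \textbf{e}_i+\textbf{e}_j$ appeared, comparing with the others and using distinctness and convexity forces $t=1$), and $\textbf{p}_l = \textbf{e}_i$ or $\textbf{e}_j$ alone is impossible since then the corresponding $\textbf{x}^{\textbf{a}_{k_l}}$ would be a variable dividing $x_ix_j \in \mathcal{G}(I)$, contradicting minimality. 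Thus the only points available are $2\textbf{e}_i$ and $2\textbf{e}_j$, and to get the $i$-th and $j$-th coordinates of $\textbf{e}_i+\textbf{e}_j$ both nonzero, both $2\textbf{e}_i$ and $2\textbf{e}_j$ must occur among the $\textbf{p}_l$; this means $x_i^2 = \textbf{x}^{\textbf{a}_{k}} \in \mathcal{G}(I)$ and $x_j^2 = \textbf{x}^{\textbf{a}_{k'}} \in \mathcal{G}(I)$ for suitable indices, as desired. The main obstacle is the bookkeeping in this last step: ruling out the stray possibilities $\textbf{p}_l \in \{\textbf{e}_i, \textbf{e}_j, \textbf{e}_i+\textbf{e}_j\}$ cleanly, which is where the degree-$\leq 2$ hypothesis and the minimality of $\mathcal{G}(I)$ both get used.
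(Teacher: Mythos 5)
The paper itself only sketches the first assertion and leaves the rest "to the reader", and your proposal fills in the details along exactly the intended lines (convex combinations, nonnegativity of coordinates, minimality of $\mathcal{G}(I)$); it is essentially correct. Two sub-claims in your last step are, however, not right as stated. First, it is false that $\textbf{p}_l=\textbf{e}_i+\textbf{e}_j$ can occur only when $t=1$: for example $\textbf{e}_i+\textbf{e}_j=\tfrac12(\textbf{e}_i+\textbf{e}_j)+\tfrac14(2\textbf{e}_i)+\tfrac14(2\textbf{e}_j)$ is a convex combination of three distinct points, so you cannot reduce to "the only points available are $2\textbf{e}_i$ and $2\textbf{e}_j$". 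Second, "total coordinate sum at most $2$" is asserted rather than derived. Both issues are repaired by one observation: since $x_i,x_j\notin\mathcal{G}(I)$ (they divide the minimal generator $x_ix_j$), no point of $\supp I$ supported inside $\{i,j\}$ has coordinate sum $<2$; as the weighted average of the coordinate sums equals $2$, every $\textbf{p}_l$ has coordinate sum exactly $2$, so the only candidates are $2\textbf{e}_i$, $2\textbf{e}_j$ and $\textbf{e}_i+\textbf{e}_j$. If, say, $2\textbf{e}_i$ did not occur, the $i$-th coordinate of the combination would equal the weight of $\textbf{e}_i+\textbf{e}_j$, forcing that weight to be $1$ and hence $t=1$, a contradiction; so both $2\textbf{e}_i$ and $2\textbf{e}_j$ occur, and minimality then upgrades $x_i^2,x_j^2\in I$ to $x_i^2,x_j^2\in\mathcal{G}(I)$ as you conclude.
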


Suppose that $I$ is a monomial ideal and $u\in \mathcal{G}(I)$ is a vertex of $I$ such that the $u$-chart of
$\Bl{I}$ is regular. If we set $U_1=\{x_1, \dots, x_n\}$, $U_2=\{\f{u'}{u} \colon \;  u\neq u'\in
\mathcal{G}(I)\}$ and $U=U_1\cup U_2$, then by Lemma~\ref{FW crit} there is a $U'\se U$ with $|U'|=n$ and
$k[U]=k[U']$. The next lemma states the form of $U'$ in some special cases. Here by $\deg_i (f)$ for a $f\in R$,
we mean the degree of $f$ in the variable $x_i$. Also $\supp u$ means $\{i \colon \;  x_i|u\}$ for a monomial $u$
of $R$.
\begin{lem}\label{U' in some case}
Using the above notations and assumptions, if
\begin{enumerate}
\item $\deg u\leq \deg u'$ for all $u'\in \mathcal{G}(I)$ and

\item $\deg_i u\geq \deg_i u'$ for all $i\in \supp u$ and all $u'\in \mathcal{G}(I)$,
\end{enumerate}
then for each $j$ with $x_j\notin U'$, there is a $v(j)\in \supp u$ such that $U'=(U_1\cap U')\cup \{
\f{x_j}{x_{v(j)}} \colon \; x_j\notin U' \}$.
\end{lem}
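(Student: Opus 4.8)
The plan is to run the argument of Lemma~\ref{gen in general cluts} almost verbatim, using the degree hypotheses (1) and (2) where the clutter proofs used uniformity. Fix $j$ with $x_j\notin U'$. Since $k[U]=k[U']$ and $x_j$ is a monomial lying in a subalgebra of $k[x_1^{\pm1},\dots,x_n^{\pm1}]$ generated by monomials, $x_j$ must be a product of elements of $U'$ (this is the same observation used to obtain the display~(\ref{star}) in the proof of Lemma~\ref{gen in general cluts}). Collecting the factors coming from $U_2$ as $w_1/u,\dots,w_t/u$ with $w_l\in\mathcal{G}(I)\sm\{u\}$ and those coming from $U_1$ as $x_{i_1},\dots,x_{i_r}$, and clearing denominators, I get
\[
x_j\,u^t \;=\; w_1\cdots w_t\; x_{i_1}\cdots x_{i_r}.
\]
Comparing total degrees and invoking (1) gives $1+t\deg u\ge t\deg u+r$, so $r\le 1$; and if $t=0$ then $x_j=x_{i_1}\cdots x_{i_r}$ forces $x_j\in U'$, a contradiction, so $t\ge 1$.

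Next I would compare exponents one variable at a time. For $a\in\supp u$, bounding $\sum_l\deg_a w_l\le t\deg_a u$ via (2), the identity $\deg_a(x_j)+t\deg_a u=\sum_l\deg_a w_l+|\{s:i_s=a\}|$ yields $\deg_a(x_j)\le|\{s:i_s=a\}|$. Taking $a=j$ shows that if $j\in\supp u$ then some $i_s=j$, i.e.\ $x_j\in U'$ — a contradiction; hence $j\notin\supp u$. For $a\notin\supp u$ with $a\ne j$ both sides of the exponent identity vanish, so no $w_l$ and no $x_{i_s}$ involves $x_a$; also $i_s\ne j$ for all $s$, since $x_{i_s}\in U'$ while $x_j\notin U'$. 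Thus every $i_s\in\supp u$ and every $w_l$ is a monomial in $\{x_a:a\in\supp u\}\cup\{x_j\}$. Reading off the exponent of $x_j$ gives $\sum_l\deg_j w_l=1$, so exactly one $w_l$, say $w_1$, is divisible by $x_j$ and $\deg_j w_1=1$.

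The crux is pinning down $w_1$. From $\deg_j w_1=1>\deg_j u$ and (1) we get $\deg w_1\ge\deg u$; from (2), $\deg w_1=1+\sum_{a\in\supp u}\deg_a w_1\le 1+\deg u$. If $\deg w_1=\deg u+1$, then $\deg_a w_1=\deg_a u$ for every $a\in\supp u$, so $w_1=x_j u$ and $w_1/u=x_j\in U'$, contradicting $x_j\notin U'$. Hence $\deg w_1=\deg u$, and since its exponents on $\supp u$ are componentwise $\le$ those of $u$ with total one less, $w_1=x_j\,(u/x_b)$ for a unique $b\in\supp u$; therefore $x_j/x_b=w_1/u\in U'$, and I set $v(j)=b\in\supp u$.

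Finally, a cardinality count closes the argument, exactly as in Lemma~\ref{gen in general cluts}. Put $U''=(U_1\cap U')\cup\{\f{x_j}{x_{v(j)}}:x_j\notin U'\}$; by the previous paragraph $U''\se U'$. The quotients $\f{x_j}{x_{v(j)}}$ are pairwise distinct (distinct numerators) and none is a variable (as $j\notin\supp u\ni v(j)$ forces $j\ne v(j)$), so they are disjoint from $U_1\cap U'$; hence $|U''|=|U_1\cap U'|+|U_1\sm U'|=n=|U'|$ by Proposition~\ref{FW crit}, and $U''=U'$, which is the asserted form. The only genuine difficulty is the bookkeeping in the third paragraph: arranging the case split so that both ``$j\in\supp u$'' and ``$\deg w_1=\deg u+1$'' are ruled out by the standing hypothesis $x_j\notin U'$; everything else is routine degree arithmetic and the same counting as before.
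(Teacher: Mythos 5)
Your proof is correct and follows essentially the same route as the paper's: both express $x_j$ as a product of elements of $U'$ and use hypotheses (i) and (ii) to force that product to contain a factor of the form $\f{x_j}{x_{v(j)}}$ with $v(j)\in\supp u$, then finish with the same cardinality count. The paper gets there a bit faster by noting that (ii) makes the numerators of the $U_2$-factors (in lowest terms) avoid $\supp u$, so their denominators cannot cancel and one immediately gets $r=s=1$; your exponent-by-exponent bookkeeping reaches the same conclusion.
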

\begin{proof}
Suppose that $x_j\notin U'$. Then
\begin{equation}\label{Local eq1}
x_j= x_{i_1}\cdots x_{i_r}z_1\cdots z_s
\end{equation}
where $z_i=\f{u'_i}{u}$, with all terms of the right hand side in $U'$. Note that the assumption (ii) implies that
when we write $z_i$'s in the simplest form, if  $x_l$ divides the numerator of some $z_i$, then $l\notin \supp u$.
Hence the in the product $z_1\cdots z_s$ the denominators of $z_i$'s do not cancel out. This forces $r$ to be at
least one. On the other hand, since degree of the left hand side of (\ref{Local eq1}) is one and $\deg z_i\geq 0$
(by (i)), we see that $r=1$. Again since the denominators of $z_i$'s do not cancel out in their product, we
conclude that $s\leq 1$, so $s=1$. Therefore, $z_1=\f{x_j}{x_{i_1}}\in U'$ and $i_1\in \supp u$. Thus if we set
$v(j)=i_1$, then the result is established.
\end{proof}

\begin{prop}\label{degree 1}
Suppose that $\mathcal{G}(I)= \{x_1, x_2, \ldots, x_t, u_1, \ldots, u_s\}$, where $\deg u_i\geq 2$. If $t,s\geq
2$, then the $x_i$-chart of $\Bl{I}$ is not regular for $i\in [t]$.
\end{prop}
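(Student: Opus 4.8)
The plan is to fix $i\in[t]$, say $i=1$, and examine the $x_1$-chart of $\Bl{I}$. By Lemma~\ref{vertices in deg 2} the monomial $x_1$ is a vertex of $I$ (it is a pure power $x_1^1$), so Proposition~\ref{FW crit}(i) applies: the $x_1$-chart is $\spec{k[U]}$ with $U=U_1\cup U_2$, where $U_1=\{x_1,\ldots,x_n\}$ and $U_2=\{u'/x_1\colon x_1\neq u'\in\mathcal{G}(I)\}$, and regularity is equivalent to the existence of $U'\subseteq U$ with $|U'|=n$ and $k[U]=k[U']$. I would assume such a $U'$ exists and derive a contradiction from $t,s\geq 2$. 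The key point is to understand the shape of $U_2$: for $j\in\{2,\ldots,t\}$ the element $x_j/x_1$ lies in $U_2$ and has degree $0$; for each generator $u_\ell$ of degree $\geq2$, the element $u_\ell/x_1$ has degree $\geq 1$ and, crucially, $x_1$ does \emph{not} divide its numerator (since $x_1\in\mathcal{G}(I)$ forces $\deg_1 u_\ell\in\{0,1\}$, and in the simplest form of $u_\ell/x_1$ the $x_1$ cancels out entirely). So $x_1$ appears in no numerator of any element of $U_2$, and among the elements of $U_1\cup U_2$ the only one whose numerator is divisible by $x_1$ is $x_1$ itself.

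The first step is to show $x_1\in U'$. If $x_1\notin U'$, then $x_1$ must be expressible as a product of elements of $U'$; but every such element has numerator not divisible by $x_1$ (by the observation above) except possibly $x_1/x_1=1$, which contributes nothing — so $x_1$ cannot be so expressed, a contradiction. Hence $x_1\in U'$. The second step is to locate the two degree-zero generators $x_2/x_1$ and $x_3/x_1$ (here I use $t\geq2$; after relabelling, if $t=2$ one instead uses the single $x_2/x_1$ together with the structure forced by $s\geq2$, so let me keep the argument uniform by working with the whole set $\{x_2/x_1,\ldots,x_t/x_1,u_1/x_1,\ldots,u_s/x_1\}$). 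I would count: $U'$ has exactly $n$ elements, one of which is $x_1$. Now I examine which of $x_2,\ldots,x_n$ (as bare variables) lie in $U'$ and which do not; for each $x_j\notin U'$ we get a relation $x_j=\prod(\text{elements of }U')$, and by the usual cancellation bookkeeping — exactly as in the proof of Lemma~\ref{gen in uniform cluts} — the relation must be $x_j=z\cdot x_{v(j)}$ with $z=u_\ell/x_1\in U'$ for some $\ell$ and $v(j)\in\supp u_\ell\subseteq[n]$, and in fact $u_\ell = x_1^{?}x_j\cdot(\text{rest})$ with the rest forcing $\deg z$ to be, well, whatever it is. The main obstacle is handling the case $\deg u_\ell\geq 3$: then $z=u_\ell/x_1$ has degree $\geq 2$ and a single relation $x_j = z\cdot(\text{variables})$ of degree one is impossible unless $z$ has degree $\leq 1$ — so actually no $x_j$ can be ``traded'' through a generator of degree $\geq 3$, which forces all such $u_\ell/x_1$ to be redundant in $U'$ or to not appear, and one must argue they then cannot be generated either, contradicting $k[U]=k[U']$.

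Let me restructure around that obstacle, since it is really the heart of the matter. After establishing $x_1\in U'$, the cleanest route is: among $\mathcal{G}(I)$, pick $u_1$ of minimal degree $d\geq2$ among the $u_\ell$'s, and consider $z_1=u_1/x_1\in U$. Since $k[U]=k[U']$, $z_1$ is a product of elements of $U'$; as $x_1\nmid\operatorname{num}(z_1)$, none of the factors can be $x_1$, and comparing supports and degrees (each factor is either a bare variable $x_j$ of degree $1$ or some $u_\ell/x_1$ of degree $\geq d-1$) shows that writing $z_1$ as such a product requires at least one factor of the form $u_m/x_1$ — but then its numerator $u_m$ (divided by $x_1$) must divide $z_1$'s numerator $u_1$ (up to the $x_1$ business), and since $\mathcal{G}(I)$ is a minimal generating set this forces $u_m=u_1$, i.e. $z_1\in U'$. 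Running this for each $u_\ell$ shows $u_\ell/x_1\in U'$ for all $\ell\in[s]$. Symmetrically, for the degree-zero elements $x_j/x_1$ ($2\leq j\leq t$): such an element, being a product of $U'$-elements with numerator $x_j$ (so $x_j$ must appear in exactly one numerator) and denominator $x_1$ (so $x_1$ must appear in exactly one denominator), and with total degree $0$, must itself lie in $U'$ unless $x_j\in U'$ and $1/x_1$ is somehow available, which it is not — so each $x_j/x_1\in U'$ or $x_j\in U'$, but not freely both. Now count: $U'$ must contain $x_1$, all $s$ of the $u_\ell/x_1$, and for each $j\in\{2,\ldots,t\}$ at least one of $\{x_j,\,x_j/x_1\}$; moreover it must contain enough bare variables $x_j$ ($j>t$, or those $j\leq t$ with $x_j/x_1\notin U'$) to generate all of $x_2,\ldots,x_n$. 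A careful tally — using $s\geq2$ and $t\geq2$ to produce at least $1+s+(t-1)+(\text{the remaining variables})$ distinct elements, which I expect to exceed $n$ — yields the contradiction. I would present the count as the final lemma-free computation; verifying that the tally is strictly greater than $n$ (equivalently, that $U$ genuinely has no $n$-element generating subset) is the one place where the hypotheses $s\ge2$ \emph{and} $t\ge2$ are both essential, and that is the step I expect to require the most care.
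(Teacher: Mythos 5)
Your proposal is correct in substance but organizes the contradiction differently from the paper. The paper first invokes Lemma~\ref{U' in some case} (with $u=x_1$) to conclude that every element of $U'$ is either a variable or of the form $x_j/x_1$, pins $U'$ down as exactly $\{x_1,\,x_2/x_1,\ldots,x_t/x_1,\,x_{t+1},\ldots,x_n\}$, and then derives the contradiction from a single non-generation statement: $u_1/x_1$ cannot be a product of these elements, since any such product must contain a factor $x_j/x_1$ ($2\le j\le t$) to account for the denominator, forcing $x_j\mid u_1$ against the minimality of $\mathcal{G}(I)$. You instead force too many elements into $U'$ and contradict $|U'|=n$: namely $x_1$, all $s$ of the $u_\ell/x_1$, at least one of $\{x_j,\,x_j/x_1\}$ for each $2\le j\le t$, and $x_j$ for each $j>t$, giving at least $1+s+(t-1)+(n-t)=n+s>n$ elements. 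This cardinality-overflow route is legitimate, bypasses Lemma~\ref{U' in some case} entirely, and in fact needs only $s\ge1$ and $t\ge1$, so it proves slightly more than the stated proposition. Two local points need tightening before the count closes. First, your claim that no factor in the expression for $z_1=u_\ell/x_1$ can be $x_1$ is not automatic: a bare $x_1$ could a priori cancel against the denominators of several fractional factors; the correct bookkeeping is that if $k$ fractional factors and $a$ copies of $x_1$ occur, then $a-k=-1$, whence $k\ge1$, and one then rules out more than one fractional factor by divisibility. Second, you omit the possibility that a fractional factor is some $x_j/x_1$ with $2\le j\le t$; this is excluded because its numerator $x_j$ would then divide $u_\ell$ (no cancellation of $x_j$ is possible, as all denominators are powers of $x_1$), contradicting minimality of $\mathcal{G}(I)$ --- which is precisely the paper's own punchline. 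With these repairs, and with the final tally actually computed rather than left as ``I expect to exceed $n$'', your argument is complete.
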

\begin{proof}
We assume $i=1$ and that the $x_1$-chart is regular. According to Lemma~\ref{vertices in deg 2}, $x_1$ is a vertex
and clearly satisfies the conditions of Lemma~\ref{U' in some case}. Thus using the notations of Lemma~\ref{U' in
some case}, it easily follows that $U'=\{x_1,\f{x_2}{x_1}, \ldots, \f{x_t}{x_1}, x_{t+1}, \ldots, x_{t+s}\}$. Now
$\f{u_1}{x_1}$ should be a product of elements of $U'$. Clearly this product contains a term of the form
$\f{x_j}{x_1}$ for $1< j\leq t$ which means $x_j|u_1$, a contradiction with the minimality of $\mathcal{G}(I)$.
\end{proof}

\begin{cor}\label{deg<=2 uniform}
Suppose that $I$ is a tame monomial ideal generated in degree at most 2. Then either all elements of $\mathcal{G}(I)$ have
degree 1 or all of them have degree 2.
\end{cor}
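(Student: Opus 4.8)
The plan is to prove the contrapositive: if $I$ is generated in degree at most $2$ and $\mathcal{G}(I)$ contains both a generator of degree $1$ and a generator of degree $2$, then $I$ is not tame. After relabelling the variables, write $\mathcal{G}(I)=\{x_1,\ldots,x_t\}\cup\{u_1,\ldots,u_s\}$, where $x_1,\ldots,x_t$ are the degree-$1$ generators and $\deg u_i=2$ for each $i$; by hypothesis $t\geq 1$ and $s\geq 1$. The first step is the structural remark that, since each $x_p$ ($p\in[t]$) already belongs to $\mathcal{G}(I)$, minimality of the generating set forces $x_p\nmid u_i$ for all $p\in[t]$ and all $i$. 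Hence $\supp{u_i}\subseteq\{t+1,\ldots,n\}$, so each $u_i$ has the form $x_{a_i}x_{b_i}$ with $a_i,b_i\geq t+1\geq 2$.

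Next I would analyse the $x_1$-chart of $\Bl{I}$. By Lemma~\ref{vertices in deg 2} the generator $x_1$ is a vertex of $I$, so by Proposition~\ref{FW crit}(ii) it is enough to show this chart is not regular. Suppose, for contradiction, that it is regular; then, with $U_1=\{x_1,\ldots,x_n\}$, $U_2=\{u'/x_1\colon x_1\neq u'\in\mathcal{G}(I)\}$ and $U=U_1\cup U_2$, there is a $U'\subseteq U$ with $|U'|=n$ and $k[U]=k[U']$. Lemma~\ref{U' in some case} applies with $u=x_1$: condition (i) holds because every generator has degree $\geq 1=\deg x_1$, and condition (ii) holds because $x_1\in\mathcal{G}(I)$ forces $x_1\nmid u'$ for every generator $u'\neq x_1$, whence $\deg_1(u')\leq 1=\deg_1(x_1)$. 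Therefore, since $\supp{x_1}=\{1\}$, the conclusion of that lemma reads $U'=(U_1\cap U')\cup\{x_j/x_1\colon x_j\notin U'\}$, so the members of $U'$ that are not variables are certain quotients $x_j/x_1$ lying in $U$. But the only elements of $U$ that are a single variable divided by $x_1$ are $x_2/x_1,\ldots,x_t/x_1$, because each $u_i/x_1=x_{a_i}x_{b_i}/x_1$ is already in lowest terms and has a numerator of degree $2$. Consequently every non-variable element of $U'$ has the form $x_j/x_1$ with $j\in\{2,\ldots,t\}$.

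Now fix the degree-$2$ generator $u_1=x_ax_b$ with $a,b\geq t+1$. Since $u_1/x_1\in U$ and $k[U]=k[U']$, the monomial $u_1/x_1$ equals a product of elements of $U'$; say $q$ of the factors are quotients $x_{j_1}/x_1,\ldots,x_{j_q}/x_1$ with $j_l\in\{2,\ldots,t\}$ and the remaining factors are variables. Letting $N$ denote the product of the numerators of all the factors, we obtain $N/x_1^{q}=u_1/x_1=x_ax_b/x_1$. If $q=0$ this reads $N=x_ax_b/x_1$, which is impossible since $x_1\nmid x_ax_b$; and if $q\geq 1$ it gives $N=x_ax_b\,x_1^{q-1}$, yet $x_{j_1}\mid N$ while $j_1\in\{2,\ldots,t\}$ means $j_1\notin\{1,a,b\}$, so $x_{j_1}\nmid x_ax_b\,x_1^{q-1}$ --- a contradiction. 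Hence the $x_1$-chart is not regular and $I$ is not tame, which is exactly the contrapositive of the assertion. I expect the only two places requiring care are the verification that hypothesis (ii) of Lemma~\ref{U' in some case} is satisfied (a short consequence of minimality once $x_1\in\mathcal{G}(I)$) and the numerator bookkeeping in the last step; everything else is routine. Note that this single argument covers all cases with $t,s\geq 1$ simultaneously, so Proposition~\ref{degree 1} is not strictly needed here --- although one may instead dispatch the subcase $t,s\geq 2$ directly via Proposition~\ref{degree 1} together with Proposition~\ref{FW crit}(ii), leaving the cases $t=1$ or $s=1$ to the same $x_1$-chart analysis.
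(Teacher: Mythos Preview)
Your argument is correct and follows the same line as the paper: the corollary is meant to be an immediate consequence of Proposition~\ref{degree 1}, whose proof is precisely the $x_1$-chart analysis via Lemma~\ref{U' in some case} that you carry out. Your write-up is in fact a bit more careful than the paper's, since Proposition~\ref{degree 1} is stated only under the hypothesis $t,s\geq 2$, whereas your version works uniformly for all $t,s\geq 1$ (the case $q=0$ cleanly dispatches $t=1$), so you have filled a small gap the paper leaves implicit.
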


Corollary~\ref{deg<=2 uniform} together with Theorem~\ref{main cluts} imply that if $I$ is a monomial ideal and either is squarefree or generated in
degree at most 2, then it is generated in one degree. This poses the question ``Does there exist a tame monomial
ideal not generated in one degree?'' The answer is yes, as the following example shows.
\begin{ex}
Let $I=\lg x^2, y^3,xy \rg$. Then it is easy to see that the three generators of $I$ are vertices. Also the charts
of $\Bl{I}$ corresponding to $x^2$, $y^3$ and $xy$ are $k[x, \f{y}{x}]$, $k[y, \f{x}{y^2}]$ and $k[\f{y^2}{x},
\f{x}{y}]$, respectively. Hence according to Lemma~\ref{FW crit}, $I$ is tame.
\end{ex}

Note that any ideal generated with monomials of degree one is clearly tame. Thus we focus on ideals $I$ where
$\mathcal{G}(I)$ consists of monomials of degree 2. Such an ideal could be represented by the edge ideal of an undirected
graph with loops (but no multiple edges). For such a graphs $G$, the edge ideal $I(G)$ is generated by $\textbf{x}_{e}$'s
where $e$ is an edge of $G$. Here if $e$ is a loop on $i$, we set $\textbf{x}_e=x_i^2$. Note that if $G$ is simple, then it
is a clutter and by setting $d=2$ in Theorem~\ref{main cluts}, we get:

\begin{cor}\label{simple graphs}
Let $G$ be a simple graph. Then $G$ is tame \ifof $G$ is the disjoint union of a complete bipartite graph and some
isolated vertices.
\end{cor}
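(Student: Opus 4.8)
The plan is to obtain this statement as a direct specialization of Theorem~\ref{main cluts} to $d=2$, after identifying a simple graph $G$ with the $2$-uniform clutter $\cc$ whose circuits are exactly the edges of $G$ (with the isolated vertices of $G$ being precisely the isolated vertices of $\cc$). Under this identification $I(\cc)=I(G)$, so $G$ is tame if and only if $\cc$ is tame, and the whole problem reduces to unwinding the combinatorial conclusion of Theorem~\ref{main cluts} in the graph setting.

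For the direction $G$ tame $\Rightarrow$ structure, I would first dispose of the trivial case where $G$ has no edge: then $G$ is a union of isolated vertices, which is tame and is (vacuously) the disjoint union of a complete bipartite graph and isolated vertices. So assume $G$ has at least one edge. By Theorem~\ref{main cluts}, $\cc$ is a union of some isolated vertices together with a complete $d$-partite $d$-uniform clutter $\cc_0$ for some positive integer $d$. Since every circuit of $\cc$ is an edge of $G$ and $\cc_0$ has at least one circuit, $\cc_0$ is $2$-uniform, hence $d=2$. It then remains to observe that a complete $2$-partite $2$-uniform clutter is nothing but a complete bipartite graph: if $V_1\sqcup V_2$ is its $2$-partition, its circuits are precisely the $2$-subsets of $V_1\cup V_2$ meeting each $V_i$ in at most one vertex, i.e.\ the pairs with one endpoint in $V_1$ and the other in $V_2$; and for $\cc_0$ to have any circuit we need $V_1,V_2\neq\emptyset$. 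Thus $G$ is the disjoint union of the complete bipartite graph $\cc_0$ and some isolated vertices.

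For the converse, write $G$ as the disjoint union of a complete bipartite graph $K$ with parts $V_1,V_2$ and a (possibly empty) set of isolated vertices. Then the clutter of $K$ is complete $2$-partite $2$-uniform with $2$-partition $\{V_1,V_2\}$, so by the ($\Leftarrow$) part of Theorem~\ref{main cluts} (equivalently Corollary~\ref{tame uniform cluts}, together with the remark there that isolated vertices do not affect tameness) the clutter $\cc$ is tame, and hence so is $G$.

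The statement has essentially no obstacle of its own: all the mathematical content is carried by Theorem~\ref{main cluts}. The only points requiring a little care are the bookkeeping ones flagged above — checking that ``complete $d$-partite $d$-uniform with $d=2$'' unwinds \emph{exactly} to ``complete bipartite graph with both parts nonempty,'' and separating off the edgeless graph — neither of which presents a genuine difficulty.
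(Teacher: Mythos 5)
Your proposal is correct and follows exactly the paper's route: the corollary is obtained there by identifying a simple graph with a $2$-uniform clutter and setting $d=2$ in Theorem~\ref{main cluts}. Your additional bookkeeping (ruling out $d\neq 2$ when an edge exists, and unwinding ``complete $2$-partite $2$-uniform'' as ``complete bipartite'') is exactly the implicit content of that specialization.
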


Thus, in the rest of this section  we use the following notation.
\begin{nota}\label{what is G}
In the sequel, $G$ is assumed to be a graph on $[n]$ where the vertices $1, \ldots, r$ have loops and $r+1,
\ldots, n$ do not have loops with $r\geq 1$.
\end{nota}

Note that since $1$ has a loop we have $1\in \N_G(1)$.
\begin{lem}\label{loop chart}
Using Notation \ref{what is G} and for $i\in [r]$, the $x_i^2$-chart of $\Bl{I(G)}$ is regular \ifof for all edges
$e$ of $G$ (including loops), we have $e\se \N_G(i)$.
\end{lem}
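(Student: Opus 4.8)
The plan is to analyze the $x_i^2$-chart directly via Corollary~\ref{tame crit for sq frees}'s analogue, namely Lemma~\ref{FW crit} together with Lemma~\ref{U' in some case}. Fix $i\in[r]$; after relabelling assume $i=1$. Since $1$ has a loop, $x_1^2\in\mathcal{G}(I(G))$, and by Lemma~\ref{vertices in deg 2} it is a vertex of $I(G)$. Moreover $u=x_1^2$ has $\deg u=2\geq\deg u'$ is false in general, but condition (ii) of Lemma~\ref{U' in some case} does hold: $\deg_1 u=2$ while $\deg_1 u'\leq 2$ for every generator $u'$, and $\supp u=\{1\}$. Actually I must be careful: Lemma~\ref{U' in some case} requires condition (i) as well, so I would instead argue by hand using the substructure. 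The cleaner route: set $U_1=\{x_1,\dots,x_n\}$, $U_2=\{u'/x_1^2\colon x_1^2\neq u'\in\mathcal{G}(I(G))\}$, $U=U_1\cup U_2$, $S=k[U]$, and apply Lemma~\ref{FW crit}: the chart is regular iff there is $U'\subseteq U$ with $|U'|=n$ and $k[U']=S$.

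For the ($\Leftarrow$) direction, assume every edge $e$ of $G$ (loops included) satisfies $e\subseteq\N_G(1)$. Concretely this says: whenever $\{a,b\}$ is an edge (or $a=b$ a loop), both $a$ and $b$ lie in $\N_G(1)$, i.e. $\{1,a\}$ and $\{1,b\}$ are edges of $G$, so $x_1x_a, x_1x_b\in\mathcal{G}(I(G))$. Then $\dfrac{x_ax_b}{x_1^2}=\dfrac{x_1x_a}{x_1^2}\cdot\dfrac{x_1x_b}{x_1^2}$ and $\dfrac{x_a^2}{x_1^2}=\Big(\dfrac{x_1x_a}{x_1^2}\Big)^2$, so every element $u'/x_1^2$ of $U_2$ is a product of elements of the set $U'=\{x_1\}\cup\{x_j\colon j\notin\N_G(1)\}\cup\{x_1x_j/x_1^2=x_j/x_1\colon j\in\N_G(1),\,j\neq 1\}$. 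One checks $|U'|=n$ and each $x_j\in k[U']$ (if $j\in\N_G(1)$ then $x_j=x_1\cdot(x_j/x_1)$), hence $k[U']=S$ and the chart is regular.

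For ($\Rightarrow$), suppose some edge $e$ of $G$ is not contained in $\N_G(1)$; pick a vertex $j\in e\setminus\N_G(1)$, so $\{1,j\}$ is not an edge and thus $x_1x_j\notin\mathcal{G}(I(G))$. Assume toward a contradiction that the $x_1^2$-chart is regular, with $U'\subseteq U$, $|U'|=n$, $k[U']=S$. Since $x_1^2\in\mathcal{G}(I(G))$ and condition (ii) of Lemma~\ref{U' in some case} holds for $u=x_1^2$, the only monomials in $U_2$ whose simplest form has a numerator divisible by $x_1$ are... in fact no element of $U_2$ has $x_1$ in its numerator (each is $u'/x_1^2$ with $\deg_1 u'\leq 2$, and if $\deg_1 u'=1$ then $u'=x_1x_a$ gives $x_a/x_1$, numerator $x_a$ with $a\neq 1$; if $\deg_1 u'=0$ the numerator is coprime to $x_1$). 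So when we express $x_j$ (if $x_j\notin U'$) or the relevant generator as a product of elements of $U'$, the variable $x_j$ must enter, and tracing which elements of $U'$ can contribute $x_j$ to a numerator, combined with the fact that denominators $x_1^2$ of distinct $U_2$-elements cannot cancel against each other, forces $x_1x_j/x_1^2=x_j/x_1\in U'\subseteq U$, i.e. $x_1x_j\in\mathcal{G}(I(G))$ — contradicting $j\notin\N_G(1)$. (The case $x_j\in U'$ is handled by instead writing $\textbf{x}_e/x_1^2\in U_2$ as a product of elements of $U'$ and extracting the same contradiction, since $x_j$ divides its numerator.)

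The main obstacle is the bookkeeping in the ($\Rightarrow$) direction: making precise the claim that the denominators $x_1^2$ of $U_2$-elements never cancel (this uses $\deg_1 u'\leq 2 = \deg_1(x_1^2)$ for all generators, a loop-specific feature absent for ordinary vertices), and that $x_j$ can only be supplied to a numerator by an element whose numerator is exactly divisible by $x_j$ to first order — essentially re-running the argument of Lemma~\ref{U' in some case} and Lemma~\ref{gen in uniform cluts} in this mixed-degree setting. Once that is in place, both directions are short.
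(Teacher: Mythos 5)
Your argument follows the same route as the paper (identify the generating set $U'$ of the $x_1^2$-chart via Proposition~\ref{FW crit} and read off which ratios $x_j/x_1$ must lie in it), and the backward direction is complete and correct. The only real issue is your reason for sidestepping Lemma~\ref{U' in some case}: its hypothesis (i) reads $\deg u\leq \deg u'$, and since every minimal generator of $I(G)$ has degree exactly $2$, both hypotheses hold for $u=x_1^2$ (with $\supp u=\{1\}$), so the lemma applies directly --- this is precisely what the paper does. Invoking it pins down $U'$ as a set of variables together with elements of the form $x_j/x_{v(j)}$ with $v(j)\in\supp(x_1^2)=\{1\}$; membership $x_j/x_1\in U'\subseteq U$ forces $x_1x_j\in\mathcal{G}(I(G))$, i.e.\ $j\in \N_G(1)$. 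Then for any edge $e$, the element $\mathbf{x}_e/x_1^2$ has degree $0$ while every element of $U'$ has degree $0$ or $1$, so any expression of it as a product over $U'$ uses only the degree-zero factors $x_j/x_1$ with $j\in\N_G(1)$, giving $e\subseteq\N_G(1)$ at once. This replaces the ``bookkeeping obstacle'' you flag (non-cancellation of denominators, tracing numerators, the case split on whether $x_j\in U'$) with a two-line degree count; no re-running of Lemma~\ref{gen in uniform cluts} is needed.
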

\begin{proof}
(\give): We assume $i=1$. By Lemma~\ref{vertices in deg 2}, $x_1^2$ is a vertex of $I(G)$ and clearly satisfies the
conditions of Lemma~\ref{U' in some case}. Let $U$ be as in the statement right before Lemma~\ref{U' in some case}. It follows
that $k[U]$ is generated over $k$ by a set $U'$ of some variables and some monomials of the form
$u_j=\f{x_j}{x_1}\in U$. So $u_j=\f{\textbf{x}_{e'}}{x_1^2}$ for some edge $e'$ of $G$ which should be $e'= \{1,j\}$.
Therefore $j\in \N(1)$. Now for each edge $e$ of $G$, $\f{x_e}{x_1^2}$ should be a product of elements of $U'$ and
since $\deg \f{\textbf{x}_e}{x_1^2}=0$ we should have $\f{\textbf{x}_e}{x_1^2}= \f{x_i}{x_1}\f{x_j}{x_1}$ for some $i,j\in \N(1)$,
and $e\se \N(1)$.

(\rgive): It is easy to see that the coordinate ring is generated by $\f{x_j}{x_i}$ for $j\in \N(i)$ together with
$x_l$ with $l\notin \N(i)$ or $l=i$.
\end{proof}

In the following by a \emph{looped star graph} we mean an star graph with an additional loop on the central vertex
and by a \emph{looped complete graph}, we mean a complete graph with loops on each vertex.
\begin{thm}\label{main looped graph}
Suppose that $G$ is a graph with at least one loop. Then $G$ is tame \ifof $G$ is a disjoint union of some
isolated vertices with either a looped star graph or a looped complete graph.
\end{thm}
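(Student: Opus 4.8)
The plan is to reduce everything to the loop-charts via Lemma~\ref{loop chart}, which already pins down a strong necessary condition: for $G$ to be tame, each vertex $i \in [r]$ carrying a loop must satisfy $e \subseteq \N_G(i)$ for every edge $e$ of $G$ (including loops). First I would extract the consequences of this condition for all of the loop-charts simultaneously. If $i,j \in [r]$ are two looped vertices, then the loop at $j$ is an edge, so $\{j\} \subseteq \N_G(i)$, giving $j \in \N_G(i)$; symmetrically $i \in \N_G(j)$. Thus all looped vertices are pairwise adjacent and each is adjacent to itself, so the induced subgraph on $[r]$ is a looped complete graph. Next, if $v$ is any non-isolated vertex (looped or not) lying on an edge $e$, then $e \subseteq \N_G(1)$ forces $v \in \N_G(1)$, i.e. $\{1,v\}$ is an edge; so vertex $1$ is adjacent to \emph{every} non-isolated vertex of $G$. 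Running this for each $i \in [r]$ shows every looped vertex is adjacent to every non-isolated vertex.

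The dichotomy then comes from examining whether $G$ has any non-looped, non-isolated vertex. Suppose $v \in \{r+1,\dots,n\}$ is non-isolated, so it lies on some edge $e$; since $v$ has no loop, $e = \{v,w\}$ for some $w \neq v$. Applying Lemma~\ref{loop chart} at $i=1$ to this edge $e$ gives $\{v,w\} \subseteq \N_G(1)$, and applying it to every edge through $v$ or through $w$ forces, after a short argument, that $v$ and $w$ have no loops and that there can be no edge among the non-looped vertices other than of this ``bipartite-like'' shape — more precisely, I would show that if $\{v,w\}$ and $\{v',w'\}$ are edges with all of $v,w,v',w'$ non-looped, then $e \subseteq \N_G(i)$ for each $i$ together with $\N_G(i)$ being a partition class (from the $d=2$ case of Theorem~\ref{regular chart}/Lemma~\ref{loop chart}) forces a contradiction unless the non-looped non-isolated vertices are partitioned into exactly two classes with all cross edges present and no loops — but the presence of a looped vertex adjacent to all of them then collapses one side, so in fact the only surviving configuration with $r \geq 1$ is: all non-isolated vertices except one (say a loop-center) sit on one side, giving a \emph{looped star}. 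If instead there is no non-looped non-isolated vertex, then every non-isolated vertex is looped, hence lies in the looped-complete induced subgraph on $[r]$, and by the adjacency conclusions above every pair of such vertices is joined; so $G$ restricted to its non-isolated vertices is a looped complete graph.

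For the converse, I would verify tameness directly by producing, for each vertex-chart, a generating set $U' \subseteq U$ of size $n$, invoking Corollary~\ref{tame crit for sq frees} (equivalently Lemma~\ref{FW crit}); by Notation~\ref{what is G} the only charts to check are the loop-charts $x_i^2$ ($i \in [r]$) and the edge-charts $x_ix_j$, and the vertices of $I(G)$ are exactly the $x_i^2$ ($i\in[r]$) and those $x_ix_j$ not expressible via squares, by Lemma~\ref{vertices in deg 2}. For a looped star with center $1$ and leaves $2,\dots,m$ (and isolated vertices $m+1,\dots,n$), $\N_G(1) = [m]$, so by the easy direction of Lemma~\ref{loop chart} the $x_1^2$-chart is regular; the edge-charts $x_1x_j$ ($2\le j \le m$) are handled by noting $k[U]$ is generated by $x_1, x_j^{-1}x_1$ is not needed — rather one checks $k[U] = k[\{x_l : l \neq j\} \cup \{x_j/x_1\}]$ has the right transcendence behaviour. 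For a looped complete graph on $[m]$, $\N_G(i) = [m]$ for all $i \in [m]$, so every loop-chart is regular by Lemma~\ref{loop chart}, and the edge-charts $x_ix_j$ reduce to the complete-graph (i.e. $d=2$ complete $2$-partite is not what we want here, so this needs the looped analogue) computation that $k[\{x_i/x_j\}] $ together with enough variables generates a polynomial ring of the correct dimension. The main obstacle I anticipate is precisely this last bookkeeping in the \emph{edge}-charts $x_ix_j$ when both $i,j \in [r]$: unlike the squarefree uniform case there is no clean partition available, and one must argue by hand that the ring $k[x_1,\dots,x_n][\,x_k/x_i : \{i,k\}\in E(G)\,][\,x_i/x_j\,]$ is generated by $n$ of its monomials; I would organize this by choosing $x_j$ together with $x_k/x_i$ for $k \in [m]\setminus\{j\}$ (using completeness) plus $x_l$ for $l \notin [m]$, and checking these recover every $x_k/x_i$ and every $x_l$.
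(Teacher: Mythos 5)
There is a genuine gap in your forward direction: you try to derive the full structure of $G$ from the loop-charts alone (Lemma~\ref{loop chart}), but the loop-charts do not see enough. Concretely, take $G$ with looped vertices $1,2$, a non-looped vertex $3$, and edges $\{1,2\},\{1,3\},\{2,3\}$ plus the two loops. Here $\N_G(1)=\N_G(2)=\{1,2,3\}$ contains every edge, so both loop-charts are regular and your argument lets this graph survive; yet it is neither a looped star nor a looped complete graph, and it is in fact not tame. The paper closes exactly this case by examining an \emph{edge}-chart: it computes the coordinate ring of the $\{1,n\}$-chart, observes its minimal monomial generating set has cardinality $1+(n-1)+(r-1)=n+r-1$, and concludes $r=1$ via Proposition~\ref{FW crit}. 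Nothing in your proposal plays this role. Similarly, your exclusion of edges between two non-looped vertices is asserted ``after a short argument'' by appeal to $\N_G(i)$ being ``a partition class from Theorem~\ref{regular chart}'' --- but that theorem is proved only for (squarefree, uniform) clutters and does not apply to looped graphs; the paper instead works in the $\{i,j\}$-chart with Lemma~\ref{U' in some case} and derives a contradiction from the degree-zero element $x_1^2/(x_ix_j)$, which must factor as $\frac{x_l}{x_i}\frac{x_{l'}}{x_j}$ forcing $i=j$. Both edge-chart arguments are essential and absent from your plan.

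On the converse, you correctly note via Lemma~\ref{vertices in deg 2} that $x_ix_j$ with both $i,j$ looped is not a vertex of $I(G)$, but then you declare the ``main obstacle'' to be precisely these charts --- they need not be checked at all, since Proposition~\ref{FW crit}(ii) only requires regularity of vertex charts; for a looped complete graph the loop-charts (regular by Lemma~\ref{loop chart}) are the whole story. For the looped star the edge-charts $x_1x_j$ \emph{are} vertices and must be checked, but your proposed generating set $\{x_l:l\neq j\}\cup\{x_j/x_1\}$ is not even a subset of $U$ (the chart contains $x_1/x_j=x_1^2/(x_1x_j)$, not $x_j/x_1$) and does not generate $k[U]$; the correct set is $\{x_j\}\cup\{x_k/x_j: k\in[m]\setminus\{j\}\}\cup\{x_l: l>m\}$, which has exactly $n$ elements. ``Transcendence behaviour'' is also not the right criterion --- Proposition~\ref{FW crit} is about the cardinality of the minimal monomial generating set of $k[U]$ as a $k$-algebra.
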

\begin{proof}
We use Notation \ref{what is G}. Clearly we can assume that $G$ has no isolated vertices. (\give): By
Lemma~\ref{loop chart}, we see that the induced subgraph of $G$ on $[r]$ (denoted $G_1$) is a looped complete
graph. Also since for each $r<j$ the vertex $j$ is in some edge and again by Lemma~\ref{loop chart}, $j\in \N(i)$
for each $i\leq r$.

Assume that there are vertices $r< i\neq j\leq n$ such that $e=\{i,j\}$ is an edge of $G$. Then by
Lemmata~\ref{vertices in deg 2} and \ref{U' in some case}, the coordinate ring $S$ of the $e$-chart of $\Bl{I(G)}$
is generated as a $k$ algebra by a set $U'$ of some variables and some monomials of the form $\f{x_l}{x_{v(l)}}$
for some $l\in [n]$ and $v(l)\in e$. Since the only elements of $U'$ with degree zero are in the latter form and
$u=\f{x_1^2}{x_ix_j}\in S$, we conclude that $u= \f{x_l}{x_i} \f{x_{l'}}{x_j}$ for some $\f{x_l}{x_i},
\f{x_{l'}}{x_j}\in U'$. But then $l=l'=1$ and hence $v(l)=i=j$ a contradiction. Thus the induced subgraph of $G$
on vertices $r+1, \ldots, n$ has no edges.

Consequently, $G$ is the union of a looped complete graph $G_1$ and all edges $\{i,j\}$ with $i\leq r<j\leq n$. If
$r=n$, we are finished. Hence assume that $r<n$ and consider the chart corresponding to $e=\{1,n\}$. One can
readily check that its coordinate ring is
 \notag
\begin{align}
 S'= & k\l[ \{x_1,\ldots, x_n\} \cup \{ \f{x_ix_j}{x_1x_n} \colon \; i\in [r], j\in [n] \}  \r]\\
  =& k\l[\{ \ x_n\} \cup \{ \f{x_j}{x_n}\colon \;  1\leq j\leq n-1 \} \cup \{ \f{x_i}{x_1} \colon \; 1<i \leq r \} \r].
\end{align}
Note that the second set of generators $U'$ of $S'$ is minimal and as $x_e$ is a vertex of $I(G)$ by Lemma~\ref{vertices
in deg 2}, we should have $1+(n-1)+(r-1)=|U'|=n$ or $r=1$. This means that $G$ is a looped star graph as required.

(\rgive): If $G$ is looped complete graph, then the according to Lemma~\ref{vertices in deg 2} the only vertices
of $I(G)$ are $x_i^2$ for $i\in [r]$ and the corresponding charts are regular by Lemma~\ref{loop chart}. If $G$ is
a looped star graph, then the $x_1^2$-chart is regular by Lemma~\ref{loop chart}. For the other charts, say
corresponding to $x_1x_n$, the minimal set of generators found above for $S'$ has cardinality $n$ and hence the
result follows by Lemma~\ref{FW crit}.
\end{proof}

The following summarizes our results in this section.  Note that in this result, (i) corresponds to ideals
generated in degree 1, (ii) to looped complete graphs, (iii) to looped star graphs and (iv) to simple
graphs.
\begin{cor}\label{main deg<=2}
Assume that $I$ is an ideal of $R$ generated by monomials of degree at most two. Then the blowup of $\bbA_k^n$
along $I$ is regular \ifof one of the following holds.
\begin{enumerate}
\item There exists a nonempty $F\se [n]$ such that $I=P_F$.

\item There exists a nonempty $F\se [n]$ such that $I=P_F^2$.

\item There exist  nonempty $F\se [n]$ and $i\in F$ such that $I=x_iP_F$.

\item There exist nonempty $F_1,F_2\se [n]$ with $F_1\cap F_2=\tohi$ and $I=P_{F_1}P_{F_2}$.
\end{enumerate}
\end{cor}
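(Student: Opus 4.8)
The plan is to reduce everything to the structural theorems already established, via a degree analysis, so that no new argument is needed. First I would observe that if $I$ is generated in degree at most two and the blowup of $\bbA_k^n$ along $I$ is regular --- equivalently $I$ is tame --- then Corollary~\ref{deg<=2 uniform} forces $\mathcal{G}(I)$ to consist entirely of linear monomials or entirely of quadratic monomials. In the linear case $I=P_F$ with $F=\{i\colon x_i\in\mathcal{G}(I)\}$, which is case (i); conversely every such $P_F$ is tame, being generated by part of a regular system of parameters (Section~1). The remaining content is therefore entirely in the quadratic case.

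In the quadratic case I would write $I=I(G)$ for the edge ideal of a graph $G$ on $[n]$ with loops allowed but no multiple edges, as in the discussion preceding Corollary~\ref{simple graphs}, and split according to whether $G$ has a loop. If $G$ is simple, Corollary~\ref{simple graphs} says $I(G)$ is tame exactly when $G$ is a disjoint union of a complete bipartite graph $K_{F_1,F_2}$ (with $F_1,F_2$ nonempty and disjoint) and isolated vertices; since the edge ideal of $K_{F_1,F_2}$ equals $(x_ix_j\colon i\in F_1,\,j\in F_2)=P_{F_1}P_{F_2}$ and isolated vertices contribute no generators, this matches case (iv) in both directions.

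If instead $G$ has at least one loop, I would invoke Theorem~\ref{main looped graph}: $I(G)$ is tame exactly when $G$ is a disjoint union of isolated vertices with either a looped complete graph or a looped star graph. The edge ideal of a looped complete graph on a set $F$ is $(x_ix_j\colon i,j\in F)=P_F^2$, giving case (ii); and the edge ideal of a looped star graph with center $i$ and leaf set $L$ is $(x_i^2)+(x_ix_j\colon j\in L)=x_iP_F$ with $F=\{i\}\cup L$ and $i\in F$, giving case (iii). Conversely $P_F^2$ and $x_iP_F$ are precisely the edge ideals of the corresponding looped complete and looped star graphs, hence tame by Theorem~\ref{main looped graph}. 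Assembling the cases yields the stated four-way equivalence.

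The only work beyond citing these results is the bookkeeping of translating each combinatorial description of $G$ into the ideal description in (i)--(iv), together with the (routine) remark that adjoining or deleting isolated vertices does not affect tameness, exactly as used in the proof of Theorem~\ref{main cluts}. I do not expect any genuine obstacle here: the heavy lifting has already been carried out in Corollary~\ref{deg<=2 uniform}, Corollary~\ref{simple graphs} and Theorem~\ref{main looped graph}, and this corollary is essentially their common restatement.
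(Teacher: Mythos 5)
Your proposal is correct and follows exactly the route the paper intends: the corollary is stated there as a summary, with the remark that (i) corresponds to degree-one ideals, (ii) to looped complete graphs, (iii) to looped star graphs and (iv) to simple graphs, which is precisely your case split via Corollary~\ref{deg<=2 uniform}, Theorem~\ref{main looped graph} and Corollary~\ref{simple graphs}. Your translation of each graph class into the corresponding ideal ($P_F$, $P_F^2$, $x_iP_F$, $P_{F_1}P_{F_2}$) is the only bookkeeping needed, and it is done correctly.
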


\section{Rees algebra of tame clutters}

In this section we  find a generating set for the presentation ideal or toric ideal of the coordinate ring of an
affine chart of the blowup $\mathbb{A}_k^n$ along the circuit ideal of a complete $d$-partite $d$-uniform
clutter. By using this presentation, we look closely at the defining ideal of the Rees algebra of a tame complete
$d$-partite $d$-uniform clutter.

If  $I$ is the edge ideal of a graph, R. H. Villarreal shows that $I$ is of fiber type \cite[Theorem 3.1]{RV1}. He
gives an explicit description of the defining ideal  of the Rees algebra of any squarefree monomial ideal
generated in degree 2.  Villarreal give an example of a clutter to show that his arguments do not extend for
monomial ideals generated in higher degree  than 2 \cite[Example 3.1]{RV1}. If $I$ is the edge ideal of a complete
bipartite graph he finds the defining equations of the Rees algebra of I in terms of 2-minors of a ladder \cite[Proposition 2.1]{RV2}.
Here we generalize the latter result of Villarreal. More precisely, we find the defining ideal of the Rees algebra
of a tame squarefree monomial ideal . This result shows that such ideals are of fiber type.

\begin{nota}
We let $\cc$ to be a complete $d$-partite $d$-uniform clutter with the $d$-partition $\{V_i \colon \; i\in [d]\}$ and $e\in
\cc$. Also $S_{e}$ stands  for the coordinate ring of the $e$-chart of $\mathrm{Bl}_{I(\cc)} R$.  Consider the
ring homomorphism
$$\phi_{e}\colon S= R[\{T_{e'}\colon \; e\neq e'\in \cc\}]\surjects S_{e}$$ that sends  $T_e$ to $\f{\mathbf {x}_{e'}}{\mathbf{x}_{e}}$. Set
$J_{e}=\ker \phi_{e}$. Moreover, for $e\neq e'\in \cc$ we fix a vertex $v(e,e')\in e\sm e'$ such that $v(e,e')$
and $v(e',e)$ lie in the same partition and by $v_e(j)$ we mean the only vertex of $e$ in the same partition as
the vertex $j$. Finally we denote the circuit of $\cc$ obtained from $e$ by replacing $j$ instead of $v_e(j)$  by
$e(j)$.
\end{nota}

Assume that $j= v(e,e')\in V_i$. Then since $j'=v(e',e)$ is in $V_i\cap e'$ we have $j'=v_{e'}(j)$ and similarly
$j=v_e(j')$. In this case, $e(j')$ and $e'(j)$ are the circuits obtained from $e$ and $e'$ respectively, by
``swapping'' those vertices of $e$ and $e'$ which lie in the $i$'th partition. For example, $e(j')= (e\cup\{j'\})
\sm \{v_e(j')\}= (e\cup\{j'\}) \sm \{j\}$.
\begin{prop}\label{ker chart}
Using the above notations and assumptions, for each $e\in \cc$, $J_e$ is generated by $G=G_1\cup G_2$ where $G_1$
is the set of all binomials of the form $x_i-x_rT_{e(i)}$ with $i\in [n]\sm e$ and $r=v_{e}(i)$ and $G_2$ is the
set of all binomials of the form $T_{e'}-T_{e(j')}T_{e'(j)}$ where $j= v(e,e')$ and $j'=v(e',e)$, for $e'\in \cc$
with $|e'\sm e|>1$.
\end{prop}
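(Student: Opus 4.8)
The plan is to identify $S_e$ concretely as the subring of a Laurent polynomial ring, show that $S/(G)$ surjects onto $S_e$ via the obvious map, and then prove that $S/(G)$ has the ``right size'' so that the surjection is an isomorphism. First I would fix $e = e_0$ and, after reindexing, write $e_0 = \{1,\dots,d\}$ with $v_{e_0}(j) = i$ whenever $j \in V_i$; by Theorem~\ref{regular chart} (and Corollary~\ref{tame uniform cluts}) the $e_0$-chart is regular with coordinate ring $S_{e_0} = k[U']$ where $U' = \{x_1,\dots,x_d\} \cup \{x_j/x_{v_{e_0}(j)} \colon\; d<j\leq n\}$, and this is a polynomial ring in $n$ variables. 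So $\phi_{e_0}\colon S \surjects S_{e_0}$ factors as $S \surjects S/(G) \surjects S_{e_0}$ once we check $G \se J_{e_0}$, which is a direct substitution: for $g_1 = x_i - x_r T_{e_0(i)} \in G_1$ we have $\phi_{e_0}(g_1) = x_i - x_r\cdot(\mathbf{x}_{e_0(i)}/\mathbf{x}_{e_0}) = x_i - x_r\cdot(x_i/x_r) = 0$, and for $g_2 = T_{e'} - T_{e_0(j')}T_{e'(j)} \in G_2$ the verification that $\mathbf{x}_{e'}/\mathbf{x}_{e_0} = (\mathbf{x}_{e_0(j')}/\mathbf{x}_{e_0})(\mathbf{x}_{e'(j)}/\mathbf{x}_{e_0})$ reduces to the combinatorial identity $e_0(j') \cup e'(j)$ and $e_0 \cup e'$ matching up with multiplicity, which follows from the ``swapping'' description of $e_0(j'), e'(j)$ recorded just before the statement.

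The substance is the reverse inclusion: that $(G) = J_{e_0}$, equivalently that the $k$-algebra $S/(G)$ is isomorphic to a polynomial ring in $n$ variables (hence an integral domain of the right dimension, forcing the surjection $S/(G) \surjects S_{e_0}$ to be an isomorphism since $S_{e_0}$ is also a domain of that dimension). To see this I would use the relations in $G$ to rewrite every generator $T_{e'}$ of $S$ (with $e' \in \cc$, $e' \neq e_0$) in terms of the ``basic'' variables $x_1,\dots,x_n$ together with the ``elementary swap'' variables $T_{e_0(j)}$ for $j \in [n]\sm e_0$. Concretely: the relations $G_1$ let us express each $x_i$ with $i \in e_0$ as a product/quotient involving the $T_{e_0(j)}$'s implicitly, while the relations $G_2$ allow a straightforward induction on $|e'\sm e_0|$ — if $|e'\sm e_0| = 1$ then $e'$ is already of the form $e_0(j)$; if $|e'\sm e_0| > 1$, pick $j = v(e_0,e')$, $j' = v(e',e_0)$ and use $T_{e'} = T_{e_0(j')}T_{e'(j)}$ where now $|e'(j)\sm e_0| = |e'\sm e_0| - 1$, so by induction $T_{e'}$ lies in the subalgebra generated by $x_1,\dots,x_n$ and the $T_{e_0(j)}$'s. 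Thus $S/(G)$ is generated by at most $n + |[n]\sm e_0|$ elements, but the $n-d$ variables $T_{e_0(j)}$ ($j>d$) can be eliminated against $x_j$ using the $G_1$-relation $x_j = x_{v_{e_0}(j)} T_{e_0(j)}$ (here $v_{e_0}(j) \le d$, and $x_{v_{e_0}(j)}$ is one of the surviving variables and a non-zerodivisor in the target), and the remaining relations collapse. Counting carefully, $S/(G)$ is generated by exactly $n$ elements with the relations being precisely those defining the polynomial ring $k[U']$; I would make this precise by exhibiting an explicit inverse map $k[x_1,\dots,x_d, w_{d+1},\dots,w_n] \to S/(G)$ sending $w_j \mapsto T_{e_0(j)}$ and checking it is well-defined and two-sided inverse to the induced map $S/(G) \to S_{e_0}$.

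The main obstacle is the bookkeeping in the second paragraph: one must verify that the $G_2$-relations are \emph{consistent}, i.e. that different ways of reducing the same $T_{e'}$ (different choices of the vertex pair to swap, or different orders of swapping) give the same element of $S/(G)$, so that the induction actually produces a well-defined normal form and $(G)$ is not strictly smaller than $J_{e_0}$. I expect this to follow from a ``diamond lemma'' / confluence argument: any two elementary swaps on disjoint partitions commute, and a swap on partition $i$ followed by one on partition $i$ is trivial, so the rewriting system on the circuits of the complete $d$-partite clutter is confluent; combined with the fact that $\cc$ is \emph{complete} $d$-partite (so every intermediate swap stays inside $\cc$), this guarantees the normal form is well-defined. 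An alternative, possibly cleaner, route is to avoid normal forms altogether: establish $G \se J_{e_0}$ as above, note $S/(G)$ surjects onto the domain $S_{e_0}$ of Krull dimension $n$, and separately bound $\dim S/(G) \le n$ by the generation argument (every maximal ideal of $S/(G)$ needs at most $n$ generators after the eliminations), then invoke that a surjection of affine $k$-algebras of the same finite dimension from a ring with no embedded components onto a domain must be an isomorphism once one also checks $S/(G)$ is reduced — reducedness again coming from the explicit polynomial presentation. Either way, the combinatorial heart is the swapping calculus on the circuits of a complete $d$-partite $d$-uniform clutter, and I would isolate that as a preliminary lemma before assembling the proof.
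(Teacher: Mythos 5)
Your proposal is correct and follows essentially the same route as the paper: check $G\se J_e$ by direct substitution, show $S/(G)$ is a polynomial ring in $n$ variables by eliminating the $x_i$ with $i\in[n]\sm e$ via $G_1$ and then the $T_{e'}$ by induction on $|e'\sm e|$ via $G_2$, and conclude $(G)=J_e$ by comparing Krull dimensions of the two domains. The confluence issue you single out as the main obstacle does not actually arise: because $v(e,e')$ is a single fixed choice, $G_2$ contains exactly one relation per variable $T_{e'}$ with $|e'\sm e|>1$, so the eliminations form a triangular substitution (this is exactly the paper's chain of isomorphisms down to $S'_1$) and no normal-form or diamond-lemma argument is needed.
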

\begin{proof}
For simplicity we assume that $e=[d]$. It is routine to check that the stated binomials map to zero under
$\phi_e$, that is, $G\se \ker \phi_e$. We use $T$ to denote the set of indeterminates $\{T_{e'}: e\neq e'\in
\cc\}$ and set $T|_l= \{T_{e'}: e\neq e'\in \cc, |e'\sm e|\leq l\}$. Also by $G_{2l}$ we mean the subset of $G_2$
consisting of those binomials with $|e'\sm e|\leq l$. In particular, $G_{21}=\tohi$. Note that for a binomial
$T_{e'}-T_{e(j')}T_{e'(j)}$ in $G_2$, $|e(j')\sm e|=1$ and $|e'(j)\sm e|=|e'\sm e|-1$.

Let $J$ be the ideal of $S$ generated by the stated binomials, $J'$ the ideal of $S'=k[x_1,\ldots, x_d][T]$
generated by $G_2$ and $J'_l$ the ideal of $S'_l=k[x_1,\ldots, x_d][T|_l]$ generated by $G_{2l}$. Then it is easy
to see that the homomorphism $S\to \f{S'}{J'}$ which maps $x_i$ to $x_rT_{e(i)}$ with $i$ and $r$ as in the
statement of the theorem, induces an isomorphism $\f{S}{J}\to \f{S'}{J'}$. Similarly:
 $$\f{S}{J}\cong \f{S'}{J'}=\f{S'_{d-1}}{J'_{d-1}} \cong \f{S'_{d-2}}{J'_{d-2}} \cong \cdots \cong
 \f{S'_1}{J'_1}=S'_1.$$
Because $S'_1$ is a polynomial ring with $n$ indeterminates, $J$ is a prime ideal with $\dim \f{S}{J}=n$. But
$\ker\phi_e$ is also a prime ideal with $\f{S}{\ker \phi_e}$ having dimension $n$ and $J\se \ker \phi_e$.
Consequently, $J=\ker \phi_e$.
\end{proof}
\begin{thm}\label{Rees Algebra Clutter}
Let $\cc$  be a complete $d$-partite $d$-uniform clutter with $d$-partition $\{V_i:i\in [d]\}$. Then
\[
\mathcal{R}_R(I(\cc)) \simeq \frac{R \left[ T_e\colon \;\ e\in \cc \right]}{\mathcal{A}}
\]
where $\mathcal{A}$ is generated by the set of all binomials of the form $T_ex_i- x_rT_{e(i)}$ with $e\in \cc$,
$i\in [n]\sm e$ and $r=v_{e}(i)$ together with those of the form $T_eT_{e'}-T_{e(j')}T_{e'(j)}$ where $j= v(e,e')$
and $j'=v(e',e)$, for $e\neq e'\in \cc$ with $|e'\sm e|>1$. In particular, the circuit ideal of a  complete
$d$-partite $d$-uniform clutter is of fiber type.
\end{thm}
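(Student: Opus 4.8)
The plan is to globalize the chart-by-chart description obtained in Proposition~\ref{ker chart}. Write $I=I(\cc)$, $m=|\cc|$, and $S=R[T_e:e\in\cc]$, and let $\varphi\colon S\surjects \mathcal{R}_R(I)$ be the presentation sending $T_e\mapsto \mathbf{x}_et$, with $\mathcal{A}'=\ker\varphi$ the true defining ideal. First I would check that the proposed generators of $\mathcal{A}$ all lie in $\mathcal{A}'$: this is the routine verification that $T_ex_i-x_rT_{e(i)}\mapsto \mathbf{x}_ex_it-x_r\mathbf{x}_{e(i)}t=0$ (since $x_i\mathbf{x}_e=x_r\mathbf{x}_{e(i)}$ as $e(i)=(e\cup\{i\})\sm\{r\}$) and $T_eT_{e'}-T_{e(j')}T_{e'(j)}\mapsto \mathbf{x}_e\mathbf{x}_{e'}t^2-\mathbf{x}_{e(j')}\mathbf{x}_{e'(j)}t^2=0$ (both products equal $\prod$ of the same multiset of variables, as swapping the $i$-th-partition vertices of $e,e'$ preserves the union with multiplicity). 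Hence $\mathcal{A}\se\mathcal{A}'$, giving a surjection $S/\mathcal{A}\surjects \mathcal{R}_R(I)$.

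For the reverse inclusion I would argue by dimension and primeness, exactly as in the proof of Proposition~\ref{ker chart} but one degree up. Since $\mathcal{R}_R(I)$ is a domain of Krull dimension $n+1$ (it is a subalgebra of $R[t]$ containing $R$, and $I\neq 0$), it suffices to show $S/\mathcal{A}$ is a domain of dimension $\le n+1$. Localizing at $T_e$ dehomogenizes $\mathcal{A}$ to the ideal $\widetilde{\mathcal{A}}$ of the $e$-chart; by Proposition~\ref{charts}(ii) and Proposition~\ref{ker chart}, $(S/\mathcal{A})_{T_e}\cong (S_e)[T_e,T_e^{-1}]$, which is a domain of dimension $n+1$. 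Thus $\mathrm{Spec}(S/\mathcal{A})$ contains the open dense (by the blowup cover) union $\bigcup_{e\in\cc}\mathrm{Spec}((S/\mathcal{A})_{T_e})$, on which $S/\mathcal{A}$ already agrees with $\mathcal{R}_R(I)$; combined with the surjection $S/\mathcal{A}\surjects\mathcal{R}_R(I)$ this forces the kernel $\mathcal{A}'/\mathcal{A}$ to be supported only on $V(T_e:e\in\cc)$, i.e.\ to be annihilated by every $T_e$. I would then rule this out by noting that $\mathcal{R}_R(I)$ is $\NN$-graded in the $t$-degree and torsion-free over $k[T_e:e\in\cc]$ in an appropriate sense—more cleanly, observe that $S/\mathcal{A}$ is itself a domain: the binomial relations are a lattice ideal and one exhibits $S/\mathcal{A}$ as a subring of a Laurent polynomial ring via $x_i\mapsto X_i$, $T_e\mapsto (\prod_{v\in e}X_v)\cdot \tau$, whose kernel is exactly $\mathcal{A}$ by the chart computation. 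Being a domain surjecting onto a domain of the same dimension $n+1$, the surjection $S/\mathcal{A}\surjects \mathcal{R}_R(I)$ is an isomorphism.

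The main obstacle is making the passage from "the localizations $(S/\mathcal{A})_{T_e}$ are correct for every $e$" to "$\mathcal{A}=\mathcal{A}'$" fully rigorous without hand-waving about the exceptional locus: one must know that no minimal prime of $S/\mathcal{A}$ is contained in $V(\{T_e\})$, equivalently that $S/\mathcal{A}$ has no associated prime containing all $T_e$. I expect the clean route is the explicit "domain" argument of Proposition~\ref{ker chart}—showing $S/\mathcal{A}$ is an integral domain of dimension $n+1$ directly, via the same telescoping chain of isomorphisms $S/\mathcal{A}\cong S'/J'\cong\cdots\cong S'_1[t]$—so that the surjection onto the $(n+1)$-dimensional domain $\mathcal{R}_R(I)$ must be an isomorphism. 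Finally, the "fiber type" conclusion is immediate: the generators of $\mathcal{A}$ split into the linear-in-$T$ binomials $T_ex_i-x_rT_{e(i)}$, which generate $\mathcal{A}_1$ and hence lie in $S\mathcal{J}_1$, and the pure-$T$ binomials $T_eT_{e'}-T_{e(j')}T_{e'(j)}$, which lie in $S\mathcal{H}$ since they map to relations among the $\mathbf{x}_e t$ in the fiber algebra $k[\mathbf{x}_et:e\in\cc]$; therefore $\mathcal{A}=S\mathcal{A}_1+S\mathcal{H}$, i.e.\ $I(\cc)$ is of fiber type, and by Theorem~\ref{main cluts} the same holds for every tame squarefree monomial ideal.
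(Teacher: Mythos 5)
Your proposal reaches the right statement but proves the hard inclusion by a different mechanism than the paper. The paper's argument is a short homogenization--dehomogenization step: the generators of $\mathcal{A}$ are the $T_e$-homogenizations of the generators of the chart ideals $J_e$ from Proposition~\ref{ker chart}, which gives $\mathcal{A}\subseteq\mathcal{J}$; conversely any $F\in\mathcal{J}$ is dehomogenized into some $e$-chart, written there in terms of the $J_e$-generators, and rehomogenized. You instead run a dimension-and-primality argument: $S/\mathcal{A}$ surjects onto the $(n+1)$-dimensional domain $\mathcal{R}_R(I)$, so it suffices to show $S/\mathcal{A}$ is itself a domain of dimension $n+1$. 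This is a legitimate and arguably cleaner route --- it is exactly how the paper proves Proposition~\ref{ker chart} one degree down --- and it sidesteps a subtlety the paper glosses over: rehomogenizing an identity $f=\sum h_ig_i$ valid in a chart only yields $T_e^{N}F\in\mathcal{A}$ for some $N\geq 0$, so the paper's converse step implicitly needs $\mathcal{A}:T_e^{\infty}=\mathcal{A}$, which is never verified; you correctly flag this saturation problem as the main obstacle. The one weak point in your write-up is that the primality of $\mathcal{A}$ is only sketched, and the Laurent-embedding justification as stated (``whose kernel is exactly $\mathcal{A}$ by the chart computation'') is circular, since that kernel is by definition the ideal $\mathcal{A}'$ you are trying to identify with $\mathcal{A}$. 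The fix you point to --- repeating the telescoping elimination of Proposition~\ref{ker chart} for $\mathcal{A}$ itself, successively eliminating the variables $T_{e'}$ with $|e'\setminus e|$ large and the $x_i$ with $i\notin e$ --- does work and should be written out to make the argument complete. Your fiber-type deduction (the $T$-linear generators lie in $\mathcal{J}_1$, the pure-$T$ quadratic generators lie in $\mathcal{H}$) is correct and supplies the justification the paper leaves implicit.
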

\begin{proof}
First note that the set of generators of $\mathcal{A}$ is obtained by homogenization of the generators of  affine
chart $J_e$ in the variable $T_e$ and taking union over all $e\in \cc$.  By Proposition \ref{ker chart} and the
proof of Proposition \ref{charts}(ii), clearly $\mathcal{A}$ belongs to the defining ideal $\mathcal{J}$ of
$\mathcal{R}_R(I(\cc))$. Conversely, let $F\in \mathcal{J}$. Then the dehomogenizing of $F$ with respect to a
variable $T_e$, which has appeared in $F$, lies in at least one affine chart. Thus again homogenizing it with
respect to $T_e$, we get the assertion.
\end{proof}
\begin{ex}
Let $R=k[x_1,x_2,y_1,y_2,z]$. By Theorem  \ref{regular chart} the ideal $I=(x_1,x_2)(y_1,y_2)(z)$ is tame. By Theorem \ref{Rees Algebra Clutter}, the defining ideal of the Rees algebra of $I$ is generated by 
\[x_2T_1-x_1T_3,\ x_2T_2-x_1T_4,\ y_1T_4-y_2T_3,\ y_2T_1-y_1T_2,\  T_2T_3-T_1T_4. \]
\end{ex}
Note that the defining ideal of the Rees algebra of ideals in cases (i) and (iv) in Corollary \ref{main deg<=2}, can be determined by Theorem \ref{Rees Algebra Clutter}. Also in case (iii), the defining equation of the $x_iP_F$ is the same as those of $P_F$. Finally, in the case (ii), the defining ideal of the Rees algebra of $P_F^2$ can be found in \cite[Theorem 1]{Barshay}. Consequently, tame monomial ideals generated in degree at most $2$ are of fiber type.  

\end{document}